\newcommand{\qed}{\hfill \ensuremath{\Box}}
\newenvironment{proof}{\vspace{1ex}\noindent{\it Proof.}\hspace{0.5em}}
	{\hfill\qed\vspace{1ex}}
\newtheorem{theorem}{Theorem}[section]
\newtheorem{lemma}[theorem]{Lemma}
\newtheorem{proposition}[theorem]{Proposition}
\newtheorem{corollary}[theorem]{Corollary}
\newtheorem{definition}[theorem]{Definition}
\DeclareMathOperator{\Gal}{\operatorname{Gal}}
\DeclareMathOperator{\Q}{\mathbf{Q}}
\DeclareMathOperator{\Z}{\mathbf{Z}}
\DeclareMathOperator{\N}{\mathbf{N}}
\DeclareMathOperator{\Spec}{\operatorname{Spec}}
\DeclareMathOperator{\Hom}{\operatorname{Hom}}
\DeclareMathOperator{\Frac}{\operatorname{Frac}}
\DeclareMathOperator{\Lie}{\mathrm{Lie}}
\DeclareMathOperator{\Ext}{\operatorname{Ext}}
\DeclareMathOperator{\Og}{\mathcal{O}}
\DeclareMathOperator{\Pic}{\mathrm{Pic}}
\DeclareMathOperator{\Br}{\mathrm{Br}}
\DeclareMathOperator{\Gm}{\mathbf{G}_m}
\DeclareMathOperator{\NGm}{\mathscr{G}_m}
\DeclareMathOperator{\Ga}{\mathbf{G}_a}
\DeclareMathOperator{\Res}{\mathrm{Res}}
\DeclareMathOperator{\alg}{{^\mathrm{alg}}}
\DeclareMathOperator{\sep}{{^\mathrm{sep}}}
\title{Chai's conjecture for semiabelian Jacobians}
\author{Otto Overkamp}
\date{}
\begin{document}
\maketitle
{\abstract{We prove Chai's conjecture on the additivity of the base change conductor of semiabelian varieties in the case of Jacobians of proper curves. This includes the first infinite family of non-trivial wildly ramified examples. Along the way, we extend Raynaud's construction of the Néron lft-model of a Jacobian in terms of the Picard functor to arbitrary seminormal curves (beyond which Jacobians admit no Néron lft-models, as shown by our more general structural results).
Finally, we investigate the structure of Jacobians of (not necessarily geometrically reduced) proper curves over fields of degree of imperfection at most one and prove two conjectures about the existence of Néron models and Néron lft-models due to Bosch-Lütkebohmert-Raynaud for Jacobians of general proper curves in the case of perfect residue fields, thus strengthening the author's previous results in this situation.}}
\tableofcontents
\section{Introduction}
Let $\Og_K$ be an Henselian discrete valuation ring with perfect residue field $k$ and field of fractions $K.$ Let $G$ be a semiabelian variety over $K;$ i. e. suppose that $G$ is a smooth connected algebraic group over $K$ which fits into an exact sequence
$$0 \to T\to G\to E \to 0,$$
where $E$ is an Abelian variety and $T$ an algebraic torus over $K,$ respectively. A \it Néron lft-model \rm of $G$ is a smooth separated model $\mathscr{N}$ of $G$ over $\Og_K$ such that for any smooth $\Og_K$-scheme $S,$ any $K$-morphism $S_K\to G$ uniquely extends to an $\Og_K$-morphism $S\to \mathscr{N}.$ These group schemes are among the most well-studied objects in arithmetic geometry due to their fundamental importance (see \cite{BLR} for a general introduction). However, questions regarding their behaviour are usually very delicate. The two problems most often encountered are that Néron lft-models do not in general commute with ramified base change, and that the sequence 
$$0 \to \mathscr{T} \to \mathscr{N} \to \widetilde{\mathscr{N}} \to 0$$ of Néron lft-models induced by the sequence above is usually not exact. \\
Recall that $G$ is said to have \it semiabelian reduction \rm if the special fibre of $\mathscr{N}^0$ is an extension of an Abelian variety by a torus over $k.$ It is well-known that there exists a finite separable extension $L$ of $K$ such that $G_L$ has semiabelian reduction over the integral closure $\Og_L$ of $\Og_K$ in $L;$ we denote its Néron lft-model by $\mathscr{N}_L.$ Following Chai \cite{Chai}, we consider the \it base change conductor \rm
$$c(G):=\frac{1}{e_{L/K}}\ell_{\Og_L}(\mathrm{coker}((\Lie\mathscr{N})\otimes_{\Og_K}\Og_L \to \Lie\mathscr{N}_L)),$$ 
where $e_{L/K}$ is the ramification index of the extension $K\subseteq L.$ The base change conductor is a rational number which measures the failure of $\mathscr{N}^0$ to commute with base change along $\Og_K\subseteq \Og_L$ (or, equivalently, the failure of $G$ to have semiabelian reduction). Moreover, it does not depend upon the choice of $L.$ \\
In \cite[(8.1)]{Chai}, Chai conjectured that, given an exact sequence as above, we have
$$c(G)=c(T)+c(E).$$
The conjecture has generated considerable interest and has led, for example, to new discoveries in the field of motivic integration \cite{CLN}.  
It has been proven by Chai under the assumption that $k$ be finite or that $K$ be of characteristic zero \cite[Theorem 4.1]{Chai}; the latter case was later re-proven in \cite[Theorem 4.2.4]{CLN}.  Furthermore, the conjecture can be shown by relatively elementary means if $L$ may be chosen to be tamely ramified over $K$ (see the introduction to \cite{CLN} for more details). \\
The purpose of the present paper is to prove Chai's conjecture for \it Jacobians. \rm More precisely, we shall prove
\begin{theorem}\rm (Theorem \ref{Chaithm}) \it 
Let $C$ be a proper curve\footnote{i. e. purely one-dimensional scheme} over $K$ and assume that $G:=\Pic^0_{C/K}$ is semiabelian. Then Chai's conjecture holds for $G.$ 
\end{theorem}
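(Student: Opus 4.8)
The plan is to reduce the problem to a computation of base change conductors on the level of Picard functors of curves, exploiting the fact that $c$ is additive in exact sequences of semiabelian varieties that do come from exact sequences of N\'eron models, and that it only depends on the formal/geometric structure near the special fibre. First I would recall that, after replacing $K$ by a finite separable extension, we may assume $G = \Pic^0_{C/K}$ already has semiabelian reduction; the key input here is the extension (announced in the abstract) of Raynaud's description of the N\'eron lft-model of a Jacobian via the Picard functor to \emph{seminormal} curves. So the first real step is: choose a proper flat model $\mathscr{C}$ of $C$ over $\Og_K$ which is \emph{seminormal}, or more precisely whose special fibre is seminormal, after possibly normalising and then contracting; then $\Pic^0_{\mathscr{C}/\Og_K}$ (a suitable open subfunctor thereof) represents the N\'eron lft-model, and $\Lie$ of the N\'eron model is computed by $H^1(\mathscr{C}_s, \Og_{\mathscr{C}_s})$ of the special fibre.

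The second step is to identify the torus part $T \subseteq G$ and the abelian quotient $E$ in terms of the geometry of $\mathscr{C}_s$. The toric rank of the Jacobian of a (seminormal) curve is governed by the first Betti number of the dual graph of the irreducible components, while the abelian part is built from the Jacobians of the normalisations of the components; this is the classical structure theory of generalised Jacobians, and I would use it to write down $T$ and $E$ \emph{as Jacobians of simpler curves} or as products of such with tori. Concretely, I would run the following: let $\widetilde{\mathscr{C}} \to \mathscr{C}$ be the normalisation, inducing on generic fibres $\widetilde{C} \to C$ with $\widetilde{C}$ smooth; then $\Pic^0_{\widetilde{C}/K}$ is an abelian variety with good reduction contribution, and the difference between $\Pic^0_{C/K}$ and $\Pic^0_{\widetilde{C}/K}$ is, up to isogeny/filtration, a torus whose character module is read off from the singularities of $C$. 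The base change conductor being insensitive to isogenies (a standard fact I would cite or reprove), this reduces $c(G)$, $c(E)$, $c(T)$ to quantities attached to $\widetilde{C}$ and to the "gluing torus".

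The third step, and I expect this to be the crux, is to handle the \emph{exactness failure}: the sequence $0 \to \mathscr{T} \to \mathscr{N} \to \widetilde{\mathscr{N}} \to 0$ of N\'eron lft-models need not be exact, and Chai's conjecture is precisely the assertion that the length discrepancies in $\Lie$ nonetheless cancel after dividing by $e_{L/K}$. The strategy here is to work integrally on $\Og_K$ and on $\Og_L$ simultaneously: build compatible models $\mathscr{C}$ over $\Og_K$ and $\mathscr{C}_L$ over $\Og_L$ (the latter with smooth — or at least "as smooth as possible" — special fibre, realising semiabelian reduction), relate their $H^1(\Og)$'s by the base change map, and show that the cokernel of $\Lie\mathscr{N} \otimes \Og_L \to \Lie\mathscr{N}_L$ decomposes along the filtration by the dual graph combinatorics into a "toric" contribution and an "abelian" contribution whose lengths add up. The main obstacle is controlling the seminormalisation and the contraction steps integrally in the presence of \emph{wild} ramification — where the tame case's explicit resolution is unavailable — so I would lean on the structural results alluded to in the abstract (the classification of which curves have Jacobians admitting N\'eron lft-models) together with a dévissage that isolates the wild part into a single torus factor, for which $c(T)$ can be computed directly via the character module and the (wild) Galois action, matching it term by term against the contribution of the vanishing cycles in $H^1$. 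Once the two sides are written as sums over the same combinatorial/Galois-theoretic data, the identity $c(G) = c(T) + c(E)$ follows by comparing summands.
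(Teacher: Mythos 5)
Your outline gets the geometric dictionary right (normalisation of $C$ gives the abelian part, the gluing data gives the torus, and one should work with integral models and $H^1(\cdot,\Og)$), but several load-bearing steps are either wrong or missing. First, the opening reduction is not available: you cannot ``assume $G$ already has semiabelian reduction after replacing $K$ by a finite separable extension,'' because $c(G)$, $c(T)$, $c(E)$ are defined over $K$ and measure exactly the failure of the N\'eron models over $\Og_K$ to commute with the base change to the field $L$ where semiabelian reduction is attained; over that field the statement is vacuous. Second, your claim that $\Lie$ of the N\'eron lft-model ``is computed by $H^1$'' of a suitable model is false in general: cohomological flatness can fail (the paper even constructs such an example), and the comparison map $\lambda\colon H^1(\mathscr{C},\Og_{\mathscr{C}})\to\Lie\mathscr{N}$ is neither injective nor surjective in general. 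The actual key technical input --- the generalisation of Raynaud's theorem to semi-factorial strong model pairs of seminormal curves, proved via rigidificators, asserting only that $\ker\lambda$ and $\mathrm{coker}\,\lambda$ have the \emph{same length} (and the analogous statement $\hbar\colon\ker h\to\Lie\mathscr{T}$ for the toric part) --- is absent from your plan, and without it the lengths you want to compare are not under control. Third, you invoke isogeny invariance of the base change conductor as ``a standard fact''; for tori this is the deep Chai--Yu theorem, for abelian or semiabelian varieties in positive characteristic it is not known in this generality, and the paper never uses it.

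Your third step also does not engage with the mechanism that actually makes the proof work in the wild case. The paper does not build a model of the curve over $\Og_L$ nor decompose the cokernel of $\Lie\mathscr{N}\otimes\Og_L\to\Lie\mathscr{N}_L$ by dual-graph combinatorics; instead it uses (i) the exactness of $0\to\mathscr{T}_L\to\mathscr{N}_L\to\widetilde{\mathscr{N}}_L\to 0$ over $\Og_L$ once semiabelian reduction and splitness of $T$ are attained (the Chai--Yu argument), (ii) the three equal-length statements over $\Og_K$ for $\hbar$, $\lambda$, $\widetilde{\lambda}$, all obtained from one semi-factorial strong model pair $\widetilde{\mathscr{C}}\to\mathscr{C}$ and the push-out exact sequence $0\to\Og_{\mathscr{C}}\to\psi_\ast\Og_{\widetilde{\mathscr{C}}}\to\iota_\ast(\mathcal{A}/\mathcal{B})\to 0$, and (iii) a snake-lemma bookkeeping comparing the base-changed $H^1$-sequence with the Lie algebra sequence over $\Og_L$, in which the three base change conductors appear as $\mathrm{coker}\,\omega$, $\mathrm{coker}\,\xi$, $\mathrm{coker}\,\widetilde{\xi}$ and cancel. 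Your proposed d\'evissage ``isolating the wild part into a single torus factor'' and matching it ``term by term against vanishing cycles'' is precisely the kind of explicit computation that is unavailable in wild ramification and that this structure is designed to avoid; as written it is a gap rather than a proof strategy.
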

Throughout the paper, we shall assume that $\Og_K$ is complete, that $k$ is algebraically closed, and that $\mathrm{char}\, K = p>0.$ This leads to no loss of generality by Chai's result in characteristic 0 (recalled above), together with \cite[Chapter 10.1, Proposition 3]{BLR}. Moreover, except for the last paragraph, we shall assume that $C$ is reduced, which is no restriction either because, assuming that $\Pic^0_{C/K}$ is semiabelian, the morphism $\Pic^0_{C/K} \to \Pic^0_{C_{\mathrm{red}}/K}$ is an isomorphism \cite[Chapter 9.2, Proposition 5]{BLR}. \\
The tools we shall employ come from two sources: On the one hand, we shall generalise a result originally due to Raynaud \cite[Theorem 3.1]{LLR} and adapt the methods introduced in \it op. cit. \rm to our situation. In particular, we obtain a description of the Néron lft-model of the Jacobian of a proper seminormal curve in terms of the Picard functor of a suitably chosen model of the curve (see Theorem \ref{Raynaudtypethm} and Corollary \ref{RaynaudGenCor}), generalising Raynaud's results \cite{RaynaudPic} as far as possible. On the other hand, we shall make use of the methods developed by the author to study Néron models of Jacobians of singular curves \cite{OvI,Ov}. \\
While our results do not solve the conjecture in full generality, the condition we impose is completely different from those required by any of the other partial results currently known. In particular, there are no conditions on either $K$ or $k$ (other than the latter's perfectness, which is already present in Chai's original conjecture), or the ramification of the extension $L/K.$ The conjecture is trivial whenever the sequence $0\to \mathscr{T} \to \mathscr{N} \to \widetilde{\mathscr{N}}\to 0$ is exact; we include an example which shows that this is not the case in general for Jacobians (see Paragraph \ref{examplepar}). In particular, it seems that our proof cannot be simplified significantly using the present methods. \\
Finally, we shall use this opportunity to show how the main results from \cite{Ov} (particularly the two conjectures due to Bosch-Lütkebohmert-Raynaud) can be proven for Jacobians without the condition of geometric reducedness imposed throughout \it op. cit. \rm if the residue fields of the (Dedekind) base scheme $S$ are perfect. \\
\\
$\mathbf{Acknowledgement}.$ The author would like to thank the Mathematical Institute of the University of Oxford, where this paper was written, for its hospitality. He would like to express his gratitude to Professor D. Rössler for helpful conversations. Moreover, the author was supported by the German Research Foundation (Deutsche Forschungsgemeinschaft; Geschäftszeichen OV 163/1-1, Projektnummer 442615504), for whose contribution he is most grateful. Finally, the author would like to express his gratitude to the referee for making a number of very valuable comments on a previous version of this article, and in particular for pointing out that Lemma \ref{alternatingsplitlem} was already known.

\section{Preliminary results}

\subsection{Excellence and degrees of imperfection}
Let $R$ be a discrete valuation ring of equal characteristic $p>0.$ Let $K$ and $\kappa$ be the field of fractions and the residue field of $R,$ respectively. For any field $F$ of characteristic $p,$ we let $\delta(F)$ be the unique nonnegative integer (or symbol $\infty$) such that $[F:F^p]=p^{\delta(F)}$ and call it the \it degree of imperfection \rm of $F.$ Clearly, $F$ is perfect if and only if $\delta(F)=0.$ If $R$ is excellent, the numbers $\delta(K)$ and $\delta(\kappa)$ are related as follows:
\begin{lemma}
Suppose that $R$ is excellent. Then $\delta(K) = \delta(\kappa)+1.$\label{plusonelem}
\end{lemma}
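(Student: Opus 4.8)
The plan is to prove the two inequalities $\delta(K)\ge \delta(\kappa)+1$ and $\delta(K)\le \delta(\kappa)+1$ separately. The first holds for an arbitrary discrete valuation ring of equal characteristic $p$; only the second uses excellence, which reflects the fact that the identity fails for general (non-excellent) such rings.

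For the first inequality, fix a uniformiser $\pi$ and elements $(x_i)_{i\in I}$ of $R$ whose residues form a $p$-basis of $\kappa$ (with $I$ of cardinality $\delta(\kappa)$, possibly infinite). Each $x_i$ is a unit, and I claim that for every finite $J\subseteq I$ the family $(x_i)_{i\in J}\cup\{\pi\}$ is $p$-independent in $K$; letting $|J|\to\delta(\kappa)$ then gives $[K:K^p]\ge p^{\delta(\kappa)+1}$. To see the claim, let $v$ be the normalised valuation on $K$; the restriction of $v$ to $K^p$ has value group $p\Z$ and residue field $\kappa^p$. The monomials $\prod_{i\in J}x_i^{e_i}\cdot\pi^{e}$ with $0\le e_i,e<p$ have valuation $e$, hence lie in pairwise distinct cosets of $p\Z$ according to $e$, while for fixed $e$ their residues $\prod_{i\in J}\bar x_i^{\,e_i}$ are linearly independent over $\kappa^p$ by the choice of the $x_i$. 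The usual fundamental-inequality argument for valued fields (separate elements first by the class of their valuation, and within a class by their residues) then shows these $p^{|J|+1}$ monomials are linearly independent over $K^p$, which is exactly the asserted $p$-independence.

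For the reverse inequality, pass to the completion $\widehat R$. It is a complete equicharacteristic discrete valuation ring with residue field $\kappa$, so by Cohen's structure theorem $\widehat R\cong\kappa[[t]]$ and $\widehat K:=\Frac(\widehat R)=\kappa((t))$. Here $\delta(\widehat K)=\delta(\kappa)+1$ \emph{exactly}: the inequality $\ge$ is the special case of the previous paragraph, while for $\le$ one observes that $\widehat K^p=\kappa^p((t^p))$ and that a $p$-basis of $\kappa$ together with $t$ generates $\kappa((t))$ over $\kappa^p((t^p))$, so $[\widehat K:\widehat K^p]\le p^{\delta(\kappa)+1}$. Now excellence enters: since $R$ is excellent it is a $G$-ring, so the generic formal fibre $\Spec(\widehat R\otimes_R K)=\Spec(\widehat K)$ is geometrically regular over $K$; as $\widehat K$ is a field this means precisely that $\widehat K/K$ is separable, i.e.\ $\widehat K\otimes_K K^{1/p}$ is reduced. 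Hence any $p$-independent subset of $K$ remains $p$-independent in $\widehat K$, and applying this to a $p$-basis of $K$ gives $\delta(K)\le\delta(\widehat K)=\delta(\kappa)+1$. Combining the two inequalities yields the lemma.

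The substantive point is the separability of $\widehat K/K$ extracted from excellence; the first inequality and the computation of $\delta(\widehat K)$ are routine valuation theory, and the only place needing care is the bookkeeping when $\delta(\kappa)=\infty$, where each ``$p^{(\cdot)}$'' is to be read as a statement about finite subfamilies and the identity becomes $\infty=\infty$.
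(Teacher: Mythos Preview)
Your proof is correct and takes a genuinely different route from the paper's. The paper argues directly with the Frobenius on $R$: when $\delta(K)<\infty$, excellence (via the Japanese property) forces $R$ to be finite over $R^p$, and then the fundamental identity $[K:K^p]=e\cdot f$ for the extension $R^p\subseteq R$ of discrete valuation rings (with $e=p$) yields $[K:K^p]=p[\kappa:\kappa^p]$ in one stroke; the case $\delta(K)=\infty$ is treated separately by a limit argument over finite subextensions of $K^p\subseteq K$, bounding ramification indices and forcing residue degrees to grow. Your argument instead isolates the two inequalities: the bound $\delta(K)\ge\delta(\kappa)+1$ is pure valuation theory valid for any DVR, while for $\delta(K)\le\delta(\kappa)+1$ you pass to the completion, compute $\delta(\widehat K)$ explicitly via Cohen's structure theorem, and then invoke excellence through the $G$-ring property (separability of $\widehat K/K$) rather than through finiteness of integral closures. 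Your decomposition makes it transparent exactly which inequality needs excellence and absorbs the infinite case uniformly into the first inequality; the paper's argument is more self-contained in that it avoids both Cohen's theorem and the machinery of formal fibres, at the cost of the somewhat ad hoc case split on $\delta(K)$.
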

\begin{proof}
We begin by treating the case where $\delta(K)<\infty$ and claim that, in this case, the Frobenius morphism $R\to R,$ $r\mapsto r^p$ is finite. This is the same as showing that $R$ is finite over $R^p.$ As $R$ is integrally closed and integral over $R^p,$ $R$ is the integral closure of $R^p$ in $K.$ As $R$ is excellent and $K$ is finite over $K^p,$ this shows that $R$ is finite over $R^p.$ The fundamental equation \cite[Tag 02MJ]{Stacks} shows that $[\kappa:\kappa^p]$ is finite and
$$[K:K^p]=p[\kappa:\kappa^p],$$ as the Frobenius map $R\to R$ clearly has ramification index $p.$ This implies the claim from the Lemma in this case. \\
If $\delta(K)=\infty,$ all we must show is that $\delta(\kappa)=\infty$ as well. Let $K^p\subseteq L \subseteq K$ be a finite subextension, and let $R_L$ be the integral closure of $R^p$ in $L.$ If $\pi$ denotes a uniformiser of $R^p$ and $\pi_L$ a uniformiser\footnote{Because $R^p$ is excellent and $L$ is finite and purely inseparable over $K^p,$ $R_L$ is a discrete valuation ring.}  of $R_L,$ then $\pi_L^p=\epsilon \pi^\nu$ for some $\epsilon\in R^{p,\times}$ and $\nu\in \N;$ moreover we have $\pi=\eta\pi_L^{e_L}$ for some $\eta\in R_L^\times,$ $e_L\in \N.$ Together these equations imply $\pi_L^p=\epsilon \eta^\nu \pi_L^{e_L\nu},$ so $e_L\in \{1,p\}.$ For each $K^p\subseteq L\subseteq K,$ let $\mathfrak{m}_L$ be the maximal ideal of $R_L.$ As above, we have the equation $[L:K^p]=e_L[R_L/\mathfrak{m}_L:\kappa^p].$ Then
$$\kappa=\varinjlim_L R_L/\mathfrak{m}_L$$ as an extension of $\kappa^p.$ However, as $[K:K^p]=\infty,$ the integer $[L:K]$ becomes arbitrarily large as $L$ runs through finite subextensions of $K^p \subseteq K.$ Since $e_L$ is bounded independently of $L,$ the number $[R_L/\mathfrak{m}_L:\kappa^p]$ must become arbitrarily large as well, which means that $[\kappa:\kappa^p]=\infty.$ 
\end{proof}
\begin{corollary}
Let $S$ be an excellent Dedekind scheme of equal characteristic $p>0$ with field of rational functions $K.$ Then the following are equivalent: \\
(i) there exists a closed point $x \in S$ such that $\kappa(x)$ is perfect,\\
(ii) for all closed points $x\in S,$ $\kappa(x)$ is perfect, and \\
(iii) we have $\delta(K)=1.$ \label{Deltacor}
\end{corollary}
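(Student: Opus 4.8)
The plan is to deduce this from Lemma \ref{plusonelem} together with a localisation argument. First I would observe that $S$ being an excellent Dedekind scheme means each local ring $\Og_{S,x}$ at a closed point $x$ is an excellent discrete valuation ring with fraction field $K$ and residue field $\kappa(x)$. Applying Lemma \ref{plusonelem} to $R=\Og_{S,x}$ immediately gives $\delta(K)=\delta(\kappa(x))+1$ for every closed point $x$. This is really the whole content; the corollary is just a repackaging.

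With that in hand, the equivalences are formal. For (i)$\Rightarrow$(iii): if $\kappa(x)$ is perfect for some closed point $x$, then $\delta(\kappa(x))=0$, so $\delta(K)=0+1=1$. For (iii)$\Rightarrow$(ii): if $\delta(K)=1$, then for every closed point $x$ we get $\delta(\kappa(x))=\delta(K)-1=0$, i.e.\ $\kappa(x)$ is perfect. Finally (ii)$\Rightarrow$(i) is trivial, since a Dedekind scheme has at least one closed point (indeed, if $S$ were to have no closed point it would be empty, and the statement is vacuous, but in the intended setting $S$ is nonempty; one may also simply note (ii) formally implies (i) as a universally-quantified statement implies its existential weakening once nonemptiness is granted).

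I expect essentially no obstacle here: the only subtlety is making sure that localising an excellent Dedekind scheme at a closed point really does yield an excellent DVR, which is standard (excellence is preserved under localisation), and that the residue field of this local ring is $\kappa(x)$, which is immediate. One should perhaps remark that if $S$ has no closed points at all then $S=\Spec K$ and all three conditions are vacuously handled, but in the generic case of interest this does not arise. Thus the corollary follows at once from the Lemma applied pointwise.
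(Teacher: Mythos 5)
Your argument is correct and is essentially the paper's proof: the paper likewise deduces all the implications by applying Lemma \ref{plusonelem} to the excellent discrete valuation rings $\Og_{S,x}$ at closed points, with (ii) $\Rightarrow$ (i) being trivial. The extra remarks about localisation preserving excellence and about degenerate $S$ are fine but not needed.
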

\begin{proof}
$(i) \Rightarrow (iii)$ follows from Lemma \ref{plusonelem}, as does $(iii)  \Rightarrow (ii)$ (both by considering $\Og_{S,x}$), and $(ii) \Rightarrow (i)$ is trivial. 
\end{proof} \\
We would like to remark that, using Lemma \ref{plusonelem}, we can give a procedure for constructing non-excellent discrete valuation rings which is slightly easier than that due to Orgogozo (see Raynaud's exposition \cite{Raynaud}), although the construction we use is the same:
\begin{proposition} \rm(cf. \cite[Proposition 11.6]{Raynaud}) \it
Let $k$ be a perfect field of characteristic $p>0$ and let $k\subseteq L \subseteq k(\!(t)\!)$ be a finitely generated subextension of transcendence degree $\geq 2.$ Then the ring
$$R:=L\cap k[\![t]\!]$$ is a non-excellent discrete valuation ring. 
\end{proposition}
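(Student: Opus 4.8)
The plan is to restrict the $t$-adic valuation to $L$, to compute the completion of $R$ by means of Cohen's structure theorem, and then to deduce the failure of excellence from an inequality of degrees of imperfection, using the criterion contained in (the proof of) Lemma \ref{plusonelem}.

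\textbf{Step 1: $R$ is a discrete valuation ring.} Write $v$ for the $t$-adic valuation on $k(\!(t)\!)$. Its restriction to $L^\times$ takes values in a subgroup of $\Z$, and this subgroup is nonzero: otherwise $L$ would consist of units of $k[\![t]\!]$, and reduction modulo $t$ would give a field embedding $L\hookrightarrow k$, which is impossible since $\trdeg_k L\geq 2>0$. Hence $v|_L$ is a discrete valuation of rank one with valuation ring $R=L\cap k[\![t]\!]$, which is therefore a discrete valuation ring. Its residue field embeds into $k$ via $R\hookrightarrow k[\![t]\!]\to k$ (the kernel being $R\cap tk[\![t]\!]=\mathfrak{m}_R$) and contains $k$, hence equals the perfect field $k$.

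\textbf{Step 2: reduction to degrees of imperfection.} Recall that a discrete valuation ring $R$ is excellent if and only if its generic formal fibre $\widehat{R}\otimes_R L=\Frac(\widehat{R})$ is geometrically regular, equivalently separable, over $L=\Frac(R)$; the remaining conditions in the definition of excellence are automatic in dimension one, and the other formal fibre is simply the residue field (see, e.g., \cite{Stacks}). Since $\widehat{R}$ is a complete discrete valuation ring of equal characteristic $p$ with perfect residue field $k$, Cohen's structure theorem gives $\widehat{R}\cong k[\![u]\!]$, so $\Frac(\widehat{R})\cong k(\!(u)\!)$ and $\delta(\Frac(\widehat{R}))=1$ (indeed $k(\!(u)\!)^p=k(\!(u^p)\!)$ has index $p$ in $k(\!(u)\!)$). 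Moreover $R\hookrightarrow\widehat{R}$, so $L$ embeds into $\Frac(\widehat{R})\cong k(\!(u)\!)$. It therefore suffices to show that $k(\!(u)\!)$ is not a separable extension of $L$.

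\textbf{Step 3: the obstruction.} Because $k$ is perfect and $L/k$ is finitely generated, $L/k$ is separably generated; choosing a separating transcendence basis and using that finite separable extensions do not change the degree of imperfection, one finds $\delta(L)=\trdeg_k L\geq 2$. Now for any separable extension $F/E$ of fields of characteristic $p$ the canonical map $\Omega^1_{E/\F_p}\otimes_E F\to\Omega^1_{F/\F_p}$ is injective, while for any field $E$ of characteristic $p$ one has $\dim_E\Omega^1_{E/\F_p}=\delta(E)$ (the differentials of a $p$-basis forming a basis), both quantities being simultaneously finite or infinite. Hence $\delta(E)\leq\delta(F)$ whenever $F/E$ is separable. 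Applying this with $E=L$ and $F=\Frac(\widehat{R})\cong k(\!(u)\!)$: if $k(\!(u)\!)/L$ were separable we would get $2\leq\delta(L)\leq\delta(k(\!(u)\!))=1$, which is absurd. Thus $\Frac(\widehat{R})/L$ is not separable, and by Step 2 the ring $R$ is not excellent.

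The only genuinely delicate point is expository: assembling the excellence criterion for one-dimensional local rings correctly, and checking that the generic formal fibre really is $\Frac(\widehat{R})\cong k(\!(u)\!)$ — this holds because inverting a uniformiser of $R$ (an element of positive $u$-adic valuation in $\widehat{R}\cong k[\![u]\!]$) inverts $u$. Granting that, the proof is a short piece of differential-module bookkeeping fed by the single input $\delta(L)=\trdeg_k L\geq 2$; no step requires any real computation.
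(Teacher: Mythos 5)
Your proof is correct, but it takes a genuinely different route from the paper's. The paper simply invokes its Lemma \ref{plusonelem}: for an excellent discrete valuation ring of equal characteristic $p$ one has $\delta(\Frac R)=\delta(\kappa)+1$ (proved there via finiteness of Frobenius and the formula $[K:K^p]=p[\kappa:\kappa^p]$), so since $\delta(L)=\trdeg_k L\geq 2$ while the residue field is squeezed between $k$ and $k[\![t]\!]/\langle t\rangle=k$, hence perfect, excellence fails. You instead unwind the definition of excellence directly: you only need the easy implication (excellent $\Rightarrow$ the generic formal fibre $\Frac(\widehat{R})$ is geometrically regular, i.e.\ separable, over $L$), then Cohen's structure theorem identifies $\Frac(\widehat{R})\cong k(\!(u)\!)$ with $\delta=1$, and monotonicity of the degree of imperfection under separable extensions (via injectivity of $\Omega^1_{L/\F_p}\otimes_L F\to\Omega^1_{F/\F_p}$ and $\dim_E\Omega^1_{E/\F_p}=\delta(E)$) gives the same numerical contradiction $2\leq\delta(L)\leq 1$. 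What your route buys: it is independent of Lemma \ref{plusonelem} (you use only the inequality $\delta(L)\leq\delta(\kappa)+1$, not the equality), and it makes explicit what the paper leaves implicit, namely that $R$ really is a discrete valuation ring (the valuation ring of the restricted $t$-adic valuation) with residue field exactly $k$; your side remark that the full ``if and only if'' excellence criterion holds for one-dimensional local rings is not needed, since only the trivial direction enters. What the paper's route buys: brevity, since Lemma \ref{plusonelem} is already available, with no appeal to formal fibres, Cohen's theorem, or K\"ahler differentials. One shared tacit hypothesis worth flagging: both arguments take $\mathrm{char}\, k=p>0$ (the standing assumption of the subsection); in characteristic zero the generic formal fibre is automatically separable and the statement would be false, so this restriction is genuinely used.
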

\begin{proof}
We can write $L$ as a finite separable extension of a purely transcendental extension $T$ of $k$ of transcendence degree $n\geq 2.$ Since $L=L^p\otimes_{T^p}T$ (by the primitive element theorem), we see that $\delta(T)=\delta(L).$ Moreover, clearly $\delta(T)=n$ since $k$ is perfect. If $\mathfrak{m}$ denotes the maximal ideal of $R,$ we have $\mathfrak{m}=L\cap \langle t \rangle,$ so we have $k\subseteq R/\mathfrak{m} \subseteq k[\![t]\!]/\langle t\rangle=k.$ This shows that $\delta(R/\mathfrak{m})=0$ whereas $\delta(L)\geq 2$ by assumption. Hence $R$ is not excellent by Lemma \ref{plusonelem}.   
\end{proof}
\begin{lemma}
Let $\kappa \subseteq \ell$ be an algebraic extension of fields of characteristic $p>0$ which we assume to be separable or finite. Then $\delta(\ell)=\delta(\kappa).$ \label{deltafinsepeqlem}
\end{lemma}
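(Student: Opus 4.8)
The plan is to treat the two hypotheses separately, after one preliminary reduction. For any field $F$ of characteristic $p$ the Frobenius $x\mapsto x^p$ identifies $F$ with $F^p$, so $[\ell:\ell^p]=[\ell^{1/p}:\ell]$ and $[\kappa:\kappa^p]=[\kappa^{1/p}:\kappa]$; throughout I would interpret all field degrees as cardinals, with $\delta(F)=\infty$ meaning merely that $[F:F^p]$ is infinite. It then suffices to prove $[\ell:\ell^p]=[\kappa:\kappa^p]$ in each of the two cases.

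For the finite case I would argue with the tower law alone. Writing $d=[\ell:\kappa]<\infty$, Frobenius applied to $\kappa^p\subseteq\kappa\subseteq\ell$ gives $[\ell^p:\kappa^p]=d$, and multiplicativity of field degrees along $\kappa^p\subseteq\ell^p\subseteq\ell$ and along $\kappa^p\subseteq\kappa\subseteq\ell$ yields $[\ell:\ell^p]\cdot d=[\ell:\kappa^p]=d\cdot[\kappa:\kappa^p]$; cancelling the finite positive integer $d$ finishes this case, with no use of separability.

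For the separable case (in which $\ell/\kappa$ may be infinite) I would first establish $\ell=\kappa\ell^p$: any $a\in\ell$ is separable over $\kappa(a^p)$ and is a root of $(X-a)^p=X^p-a^p$ there, so its minimal polynomial over $\kappa(a^p)$, being a separable divisor of $(X-a)^p$, equals $X-a$, whence $a\in\kappa(a^p)\subseteq\kappa\ell^p$. This gives $\ell^{1/p}=\ell\cdot\kappa^{1/p}$. Since $\ell/\kappa$ is separable, $\ell\otimes_\kappa\kappa^{1/p}$ is reduced; as $\kappa^{1/p}/\kappa$ is purely inseparable, $\Spec(\ell\otimes_\kappa\kappa^{1/p})\to\Spec\ell$ is a (universal) homeomorphism, so this reduced ring has a single prime ideal and is hence a field, necessarily the compositum $\ell\kappa^{1/p}$. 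Therefore $[\ell:\ell^p]=[\ell^{1/p}:\ell]=[\ell\kappa^{1/p}:\ell]=\dim_\ell(\ell\otimes_\kappa\kappa^{1/p})=[\kappa^{1/p}:\kappa]=[\kappa:\kappa^p]$, as desired.

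The only point that I expect to require genuine care is the separable case when the degrees are infinite, where one must keep track of cardinalities and correctly invoke the characterization of separability of an arbitrary algebraic extension via reducedness of $\ell\otimes_\kappa\kappa^{1/p}$ (equivalently, linear disjointness of $\ell$ and $\kappa^{1/p}$ over $\kappa$); everything else is routine manipulation of field degrees. A variant would run the separable case through $p$-bases instead: a $p$-basis of $\kappa$ stays $p$-independent over $\ell^p$ by that linear disjointness and generates $\ell$ over $\ell^p$ because $\ell=\kappa\ell^p$, so it is a $p$-basis of $\ell$, and $\delta(\ell)=\delta(\kappa)$ follows at once.
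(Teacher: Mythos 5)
Your proof is correct, and the finite case is exactly the paper's argument: the tower law applied to $\kappa^p\subseteq\ell^p\subseteq\ell$ and $\kappa^p\subseteq\kappa\subseteq\ell$ together with $[\ell^p:\kappa^p]=[\ell:\kappa]$, with no separability needed. In the separable case the two arguments ultimately hinge on the same key identity, namely that the natural map $\ell\otimes_\kappa\kappa^{1/p}\to\ell^{1/p}$ is an isomorphism (equivalently $\ell=\ell^p\otimes_{\kappa^p}\kappa$, which is how the paper phrases it), but you reach it by a genuinely different route: the paper invokes the primitive element theorem to get $\ell'=\ell'^p\otimes_{\kappa^p}\kappa$ for each \emph{finite} subextension $\ell'$ and then passes to the filtered colimit, whereas you treat the whole (possibly infinite) extension at once, proving $\ell=\kappa\ell^p$ by the elementwise minimal-polynomial argument and then combining MacLane-type reducedness of $\ell\otimes_\kappa\kappa^{1/p}$ with the fact that $\Spec(\ell\otimes_\kappa\kappa^{1/p})\to\Spec\ell$ is a homeomorphism (purely inseparable base change) to conclude that the tensor product is a field equal to the compositum $\ell\kappa^{1/p}=\ell^{1/p}$. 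Your route avoids both the primitive element theorem and the limit step, yields the stronger cardinal equality $[\ell:\ell^p]=[\kappa:\kappa^p]$ rather than merely agreement of the symbols $\delta$, and the $p$-basis variant you sketch is an equally valid packaging of the same linear disjointness; the paper's version is a little shorter because the primitive element theorem delivers the finite separable case for free. One detail you handle more carefully than the paper is the cardinal bookkeeping in the finite case when $\delta(\kappa)=\infty$, where cancelling the finite degree $d$ needs (and gets, in your write-up) the interpretation of degrees as cardinals.
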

\begin{proof}
Assume first that $\kappa\subseteq \ell$ is finite. Then we have the towers of extensions $\kappa^p\subseteq \ell^p \subseteq \ell$ and $\kappa^p\subseteq \kappa\subseteq \ell.$ The tower law shows that $[\ell:\ell^p][\ell^p:\kappa^p]=[\kappa:\kappa^p][\ell:\kappa].$ Since clearly $[\ell:\kappa]=[\ell^p:\kappa^p],$ the desired equality follows in this case. If $\ell$ is separable over $\kappa,$ then for any finite subextension $\kappa\subseteq \ell'\subseteq \ell,$ we have $\ell'=\ell'^p\otimes_{\kappa^p} \kappa.$ This follows from the primitive element theorem. But since $\ell$ is the inductive limit of its finite subextensions and since tensor products commute with inductive limits, we have $\ell=\ell^p\otimes_{\kappa^p} \kappa,$ which clearly implies the claim. 
\end{proof}
\subsection{Stein reduced morphisms}
Let $f\colon X\to S$ be a proper morphism of schemes with $S$ locally Noetherian. Then $f$ factors into a morphism $f'\colon X\to \Spec f_\ast \Og_X$ and the canonical projection $\Spec f_\ast \Og_X\to S.$ Moreover, the morphism $f'$ has geometrically connected fibres (this is Stein factorisation \cite[Tag 03H0]{Stacks}). We begin by making the following
\begin{definition}
Let $f\colon X\to S$ be a proper morphism of schemes with $S$ locally Noetherian. We say that $f$ is \rm Stein reduced \it if the induced map $f'$ has geometrically reduced fibres.
\end{definition}
Even if $S=\Spec K$ for a field $K,$ a general proper morphism $f\colon X\to S$ need not be Stein reduced (this is the well-known phenomenon that $X$ need not have geometrically reduced fibres over $\Gamma(X, \Og_X)$). One of the main observations of this paragraph is that if $f\colon X\to \Spec K$ is a proper morphism such that $X$ is normal and $\delta(K)\leq 1,$ then $f$ is Stein reduced. The argument we shall give is essentially due to Schröer (see \cite[proof of Lemma 1.3]{Schr}). 
\begin{proposition}
Let $f\colon X\to S$ be a proper morphism with $S$ locally Noetherian. Suppose that $f$ is cohomologically flat in dimension zero and has normal fibres. Moreover, assume that all points $s\in S$ satisfy $\delta(\kappa(s))\leq 1.$ Then $f$ is Stein reduced. \label{Steinredprop}
\end{proposition}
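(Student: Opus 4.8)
The plan is to reduce, using cohomological flatness, to the case $S=\Spec k'$ with $k'$ a field satisfying $\delta(k')\le 1$, and there to argue directly from normality. Write the Stein factorisation of $f$ as $X\xrightarrow{f'}S':=\Spec f_\ast\Og_X\xrightarrow{g}S$; since $f$ is proper and $S$ Noetherian, $f_\ast\Og_X$ is a coherent $\Og_S$-algebra, so $g$ is finite, and $f'$ has geometrically connected fibres. We must show that $(f')^{-1}(s')$ is geometrically reduced over $\kappa(s')$ for each $s'\in S'$. Fix such an $s'$ and put $s:=g(s')$. As $f$ is cohomologically flat in dimension zero, the formation of $f_\ast\Og_X$ commutes with the base change $\Spec\kappa(s)\to S$, so $(f_\ast\Og_X)\otimes\kappa(s)=H^0(X_s,\Og_{X_s})$, where $X_s$ is the fibre over $s$, proper over $\kappa(s)$ and, by hypothesis, normal. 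A normal Noetherian scheme is the finite disjoint union of its (integral, normal) connected components, so $X_s=\coprod_i X_{s,i}$ with each $X_{s,i}$ integral, normal and proper over $\kappa(s)$, and $H^0(X_s,\Og_{X_s})=\prod_i K_i$ with $K_i:=H^0(X_{s,i},\Og_{X_{s,i}})$ a finite field extension of $\kappa(s)$. Since $g$ is finite, $g^{-1}(s)=\Spec\big((f_\ast\Og_X)\otimes\kappa(s)\big)=\Spec\prod_i K_i$; this is a product of fields, so its points have residue fields the $K_i$, and the preimage under $f'$ of the point corresponding to the factor $K_i$ is $X\times_{S'}\Spec K_i=X_{s,i}$, now viewed over $K_i$ via the Stein factorisation of $X_{s,i}\to\Spec\kappa(s)$ (which is legitimate precisely because $H^0(X_{s,i},\Og_{X_{s,i}})=K_i$). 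So $s'$ is one of these points and we are reduced to a statement over the field $K_i$.

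It therefore suffices to prove: if $Y$ is an integral, normal, proper scheme over a field $k'$ with $\delta(k')\le 1$ and $H^0(Y,\Og_Y)=k'$, then $\kappa(Y)$ is a separable field extension of $k'$. If $k'$ is perfect this is automatic; otherwise $\delta(k')=1$, and we fix $a\in k'\setminus k'^p$, so that $k'^{1/p}=k'(a^{1/p})$ has degree $p$ over $k'$ and $\kappa(Y)\otimes_{k'}k'^{1/p}=\kappa(Y)[T]/(T^p-a)$ is reduced unless $a\in\kappa(Y)^p$. Suppose $b\in\kappa(Y)$ satisfies $b^p=a$. Then $b$ is integral over $k'$, hence over every local ring $\Og_{Y,y}$ (which contains $k'$); as $\Og_{Y,y}$ is integrally closed in $\kappa(Y)$, this forces $b\in\Og_{Y,y}$ for all $y\in Y$, whence $b\in\bigcap_{y\in Y}\Og_{Y,y}=H^0(Y,\Og_Y)=k'$ and $a=b^p\in k'^p$, a contradiction. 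So $a\notin\kappa(Y)^p$ and $\kappa(Y)/k'$ is separable. We apply this with $Y=X_{s,i}$ and $k'=K_i$, noting that $\delta(K_i)=\delta(\kappa(s))\le 1$ by Lemma \ref{deltafinsepeqlem}.

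The step I expect to need the most care is the passage from ``$\kappa(Y)/k'$ separable'' to ``$Y$ geometrically reduced over $k'$'', i.e. ruling out non-reducedness of $Y_{k'^{1/p}}$ at points other than the generic point. Here normality of $Y$ enters through Serre's criterion: $Y_{k'^{1/p}}\to Y$ is finite and flat (since $k'^{1/p}/k'$ is finite), so $Y_{k'^{1/p}}$ inherits condition $(S_1)$ from $Y$; and the minimal primes of $Y_{k'^{1/p}}$ lie over the generic point $\eta$ of $Y$, where the fibre is $\Spec\big(\kappa(Y)\otimes_{k'}k'^{1/p}\big)$, the spectrum of a field, so $(R_0)$ holds as well. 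Thus $Y_{k'^{1/p}}$ satisfies $(R_0)+(S_1)$ and is reduced, and reducedness after the single extension $k'^{1/p}/k'$ suffices for geometric reducedness. Granting this, $(f')^{-1}(s')=X_{s,i}$ is geometrically reduced over $K_i=\kappa(s')$; as $s'\in S'$ was arbitrary, $f$ is Stein reduced. The remaining points --- the decomposition of $X_s$, the identification of $g^{-1}(s)$, and the invariance of $\delta$ under finite extensions --- are routine.
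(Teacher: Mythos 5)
Your proof is correct and follows essentially the same route as the paper: reduce via cohomological flatness to a proper scheme over a field $k'$ with $\delta(k')\le 1$ and $H^0=k'$, and then use normality to show that a hypothetical $p$-th root of a chosen $a\in k'\setminus k'^p$ in the function field would be a global section, hence lie in $k'$, contradicting the choice of $a$. The only differences are expository: you spell out the Stein-factorisation bookkeeping (decomposition of the fibre into integral components with residue fields $K_i$) and the passage from reducedness of $\kappa(Y)\otimes_{k'}k'^{1/p}$ to reducedness of $Y_{k'^{1/p}}$ via Serre's criterion, steps the paper leaves implicit or delegates to Schr\"oer.
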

\begin{proof}
We may assume without loss of generality that $S=\Spec K$ for some field with $\delta(K)\leq 1.$ If $K$ is perfect, all proper morphisms $f\colon X\to \Spec K$ with $X$ reduced are Stein reduced, so we shall assume $\delta(K)=1.$ Moreover, we may assume that $X$ is connected, so that $\Gamma(X, \Og_X)$ is a field, and replace $K$ by $\Gamma(X, \Og_X).$ All we must show is that $X\times_K \Spec K^{1/p}$ is reduced \cite[Proposition 1.2]{Schr}. Since $K$ has degree of imperfection 1 (and hence so does $\Gamma(X, \Og_X)$ by Lemma \ref{deltafinsepeqlem}), we may pick any $\gamma\in K^{1/p}\backslash K$ and obtain an isomorphism $K^{1/p}\cong K[X]/\langle X^p-\gamma^p \rangle.$ Now let $F$ be the field of rational functions on $X.$ If $F\otimes_KK^{1/p}$ were non-reduced, there would have to exist an element $\xi\in F$ such that $\xi^p=\gamma^p.$ Now cover $X$ by open affine subsets $U_1,..., U_n.$ Since $\xi$ is clearly integral over $\Gamma(U_j,\Og_{U_j})$ for all $j$ and since $X$ is normal, we see that $\xi\in \Gamma(U_j, \Og_{U_j})$ for all $j.$ Hence $\xi\in K\subseteq K^{1/p},$ so $\xi=\gamma,$ which contradicts our choice of $\gamma.$ 
\end{proof}
\subsection{General background}
We recall a few basic results which we shall use freely throughout the paper. For a general introduction to the theory of Picard functors and rigidificators, see \cite{BLR} or \cite{RaynaudPic}. For a general introduction to Néron (lft-)models, see \cite{BLR}. We shall make extensive use of the results from \cite{LLR} and our strategy is modelled in part on this paper. Some results in \cite{LLR} are not correct as stated (see \cite{LLRII}), but this does not affect any of the results from \cite{LLR} which we shall use. All group schemes in this article will be commutative.
\begin{itemize}
\item If $S$ is a Dedekind scheme, $\mathscr{G}$ is a group scheme locally of finite type over $S$ and $\mathscr{H}$ is a closed subgroup scheme of $\mathscr{G}$ which is flat over $S,$ then the fppf-quotient $\mathscr{G}/\mathscr{H}$ is representable by a group scheme which is locally of finite type over $S$ \cite[Théorème 4.C]{An}.
\item If $S'$ is a finite flat extension of $S$ and $\mathscr{G'} \to S'$ is a separated group scheme locally of finite type over $S',$ then $\Res_{S'/S}\mathscr{G'}$ is representable by a group scheme locally of finite type over $S,$ which is smooth (resp. étale) over $S$ if $\mathscr{G'}\to S'$ is smooth (resp. étale); see \cite[ Lemma 2.7]{Ov}. If $\mathscr{G'}\to S'$ is smooth and has connected fibres then so does $\Res_{S'/S}\mathscr{G}$ (combine \cite[Chapter 6.4, Theorem 1]{BLR} with \cite[Proposition A.5.9]{CGP}).
\item If $\Spec \mathcal{A} \to \Spec \mathcal{B} \to S$ are finite and faithfully flat morphisms of excellent Dedekind schemes and $\mathscr{G}_{\mathrm{m}}$ denotes the Néron lft-model of $\Gm$ (the base ring being clear from context), the map of Néron lft-models $\Res_{\mathcal{B}/S} \mathscr{G}_{\mathrm{m}} \to \Res_{\mathcal{A}/S} \mathscr{G}_{\mathrm{m}}$ is a closed immersion and its quotient is the Néron lft-model of its generic fibre \cite[Lemmata 2.8, 2.9, and proofs thereof]{Ov}.
\item If $S$ is the spectrum of a discrete valuation ring and $\mathscr{F}$ is an fppf-sheaf of Abelian groups on $S,$ then there exists a morphism $\mathscr{F}\to \mathscr{F}\sep$ such that for any flat $Z\to S,$ any generically trivial element of $\mathscr{F}\sep(Z)$ is trivial and such that for any \it separated \rm Abelian group algebraic space $\mathscr{G}$ over $S,$ any morphism $\mathscr{F} \to \mathscr{G}$ factors uniquely through $\mathscr{F}\sep.$ In fact, $\mathscr{F}\sep$ is the quotient of $\mathscr{F}$ by the schematic closure of the unit section (defined functorially; see \cite[ p. 40]{RaynaudPic}).  If $\mathscr{F}$ is representable, then this functorial schematic closure of the unit section coincides with the classical one (\it ibid.\rm). 
\item An affine scheme $\Spec R$ is said to be \it seminormal \rm if for all $x, y \in R$ such that $y^2=x^3,$ there is a (necessarily unique) $a\in R$ such that $x=a^2$ and $y=a^3.$ A scheme $X$ is seminormal if it can be covered by seminormal open affine subschemes \cite[Tags 0EUL and 0EUN]{Stacks}. See \cite[Tag 0EUK]{Stacks} and \cite[subsection 2.4.1]{Ov} for more details.
\item For a closed immersion $Z\to X$ and a morphism $Z\to T,$ of schemes, we denote the push-out of the resulting diagram in the category of schemes (if it exists) by $X\cup_ZT.$ See \cite[subsection 2.4.2]{Ov} for general results regarding their existence and behaviour (especially under arbitrary base change), and for further references.  
\item For a proper, flat, and finitely presented morphism $X\to S$ of schemes, we denote by $\Pic_{X/S}$ the fppf-sheafification of the presheaf \begin{align}T\mapsto \Pic(X\times_ST)\label{PicPresh}\end{align} (cf. \cite[Chapter 8.1, Definition 2]{BLR}). In fact, $\Pic_{X/S}$ is also equal to the \it étale \rm sheafification of the presheaf (\ref{PicPresh}). This is well-known (\cite[p. 28, (1.2)]{RaynaudPic} or \cite[p. 203]{BLR}); for a complete proof, see \cite[Proposition 2.27]{Ov}. 
\end{itemize}

\section{Chai's conjecture for Jacobians}
\subsection{Degrees and compatible rigidification}
Let $f\colon X\to S$ be a morphism of schemes which is proper, flat, and locally of finite presentation. A \it rigidificator \rm of $\Pic_{X/S}$ (cf. \cite{BLR, RaynaudPic}) is a closed subscheme $Y\subseteq X$ finite and flat over $S$ such that for all $S$-schemes $T,$ the canonical map $\Gamma(X_T, \Og_{X_T}) \to \Gamma(Y_T, \Og_{Y_T})$ is injective. We have an associated \it rigidified Picard functor \rm $(\Pic_{X/S}, Y)$ \cite[p. 30]{RaynaudPic}, which is representable by an algebraic space locally of finite presentation over $S$ \cite[ Théorème 2.3.1]{RaynaudPic}. If $f$ is of relative dimension 1 (the only case of interest to us), then $\Pic_{X/S}$ and $(\Pic_{X/S}, Y)$ are formally smooth over $S$ \cite[Corollaire 2.3.2]{RaynaudPic}.
We denote the functors $\Res_{X/S}\Gm$ and $\Res_{Y/S}\Gm$ by $V_X^\times$ and $V_Y^\times;$ both are representable by group schemes over $S$ and we have a canonical exact sequence
$$0\to V_X^\times \to V_Y^\times \to (\Pic_{X/S},Y)\to \Pic_{X/S} \to 0$$ of sheaves on the big étale site of $S$ \cite[Propositions 2.1.2 and 2.4.1]{RaynaudPic}. \\ 
If $S$ is a Dedekind scheme and $X/S$ is a proper and flat relative curve, we construct open and closed subfunctors $P_{X/S}$ (resp. $(P_{X/S}, Y)$) of $\Pic_{X/S}$ (resp. $(\Pic_{X/S}, Y)$) as follows: Denote by $K$ the field of fractions of $S$ and let $K'$ be a finite separable extension of $K$ such that all irreducible components of $X_{K'}$ are geometrically irreducible. Let $S'$ be the integral closure of $S$ in $K'$; the map $S'\to S$ is then finite and locally free. Let $Y_1,..., Y_d$ be the irreducible components of $X_{K'}$ (with their natural scheme structure) and let $X_1,..., X_d$ be their scheme-theoretic closures in $X_{S'}.$ The schemes $X_1,..., X_d$ are proper and flat over $S'$ with generic fibres $Y_1,...,Y_d.$ We consider the degree function 
$$\deg_0\colon \Pic_{X\times_SS'/S'} \to \Z^d$$ which is induced by the map $\mathscr{L} \mapsto (\deg \mathscr{L}\mid_{X_1\times_{S'} T}, ..., \deg \mathscr{L}\mid_{X_d\times_{S'} T})$ for an $S'$-scheme $T$ and $\mathscr{L}\in \Pic (X\times_ST).$ The degree function induces a map 
$$\deg \colon \Pic_{X/S} \to \Res_{S'/S} \Z^d.$$ It is easy to see that $\Res_{S'/S} \Z^d$ is separated and étale over $S,$ so the subfunctor
$$P_{X/S}:=\ker \deg$$ is open and closed in $\Pic_{X/S}.$ By \cite[Chapter 9.2, Corollary 14]{BLR}, the generic fibre of $\ker \deg_0$ is equal\footnote{After base change to $K\alg,$ the map $\deg_0 \colon\Pic_{X_{K\alg}/K\alg} \to \Z^d$ is the composition of the degree map $\Pic_{X_{K\alg}/K\alg} \to \Z^d$ from \it loc. cit. \rm with the map $\Z^d\to \Z^d$ given by the matrix $\mathrm{diag}(m_1,..., m_d),$ where $m_j$ is the multiplicity of $X_{j,K\alg}$ for $j=1,..., d.$} to $\Pic^0_{X_{K'}/K'}.$ Hence the generic fibre of $P_{X/S}$ is equal to $\Pic^0_{X_K/K}.$ We construct $(P_{X/S},Y)$ in a completely analogous way; both constructions do not depend upon the initial choice of $K'.$ In particular, we obtain an exact sequence
$$0\to V_X^\times \to V_Y^\times \to (P_{X/S}, Y) \to P_{X/S} \to 0.$$
\begin{lemma} 
Let $S'\to S$ be a morphism of schemes which is finite, of finite presentation, and flat. Let $X'\to S'$ be proper, flat, and of finite presentation. Let $Y$ be a rigidificator of $\Pic_{X'/S'}.$ Then $Y$ is a rigidificator of $\Pic_{X'/S}$ and we have a canonical isomorphism \label{RigPicReslem}
$$(\Pic_{X'/S},Y)=\Res_{S'/S} (\Pic_{X'/S'}, Y).$$ 
\end{lemma}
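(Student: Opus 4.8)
The proof amounts to unwinding the definitions of both sides, the key input being the transitivity identity $X'\times_S T = X'\times_{S'}(S'\times_S T)$ for $S$-schemes $T.$ First I would check that $Y$ is a rigidificator of $\Pic_{X'/S}.$ Since $Y$ is finite and flat over $S'$ and $S'$ is finite and flat over $S,$ the composite $Y\to S$ is finite and flat. For the injectivity condition, fix an $S$-scheme $T$ and set $T':=S'\times_S T,$ viewed as an $S'$-scheme via the first projection. Because $X'$ and $Y$ are $S'$-schemes we have canonical identifications $X'_T = X'\times_{S'}T'$ and $Y_T = Y\times_{S'}T',$ under which the canonical map $\Gamma(X'_T,\Og_{X'_T})\to \Gamma(Y_T,\Og_{Y_T})$ becomes the corresponding map for the $S'$-scheme $T';$ the latter is injective because $Y$ is a rigidificator of $\Pic_{X'/S'}.$ Hence $Y$ is a rigidificator of $\Pic_{X'/S}.$

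Next I would recall that, for a rigidificator, the rigidified Picard functor sends a base scheme $T$ to the set of isomorphism classes of pairs $(\mathscr{L},\sigma),$ where $\mathscr{L}$ is a line bundle on the total space pulled back to $T$ and $\sigma$ trivialises the restriction of $\mathscr{L}$ to the rigidificator over $T.$ Since a rigidified line bundle has no nontrivial automorphisms (an automorphism being multiplication by a unit that restricts to $1$ on the rigidificator, hence equal to $1$ by the rigidificator property) and line bundles satisfy fppf descent, this presheaf of isomorphism classes is already an fppf-sheaf, so no sheafification intervenes. Applying the identifications $X'_T = X'\times_{S'}T'$ and $Y_T = Y\times_{S'}T'$ of the previous step, the groupoid of line bundles on $X'_T$ rigidified along $Y_T$ is canonically equivalent to that of line bundles on $X'\times_{S'}T'$ rigidified along $Y\times_{S'}T';$ passing to isomorphism classes gives, functorially in the $S$-scheme $T,$
$$(\Pic_{X'/S},Y)(T) = (\Pic_{X'/S'},Y)(S'\times_S T) = \bigl(\Res_{S'/S}(\Pic_{X'/S'},Y)\bigr)(T),$$
the last equality being the definition of Weil restriction along the finite locally free morphism $S'\to S.$ Both sides are representable by algebraic spaces over $S$ — the left by Raynaud's theorem, the right because the Weil restriction of an algebraic space locally of finite presentation along a finite locally free morphism is again such an algebraic space — so this identification of functors is an isomorphism of algebraic spaces, and it is visibly compatible with the group structures.

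Since the lemma is essentially bookkeeping, there is no serious obstacle; the one point that warrants care is that, a priori, the rigidified Picard functor might be defined as an fppf-sheafification, in which case one would need $\Res_{S'/S}$ to preserve fppf-sheaves and to commute with sheafification. This is guaranteed by the hypothesis that $S'\to S$ be finite, flat, and of finite presentation; in the case at hand it is moreover unnecessary, since, as explained above, the presheaf of rigidified line bundles is already a sheaf and the chain of equalities is literal.
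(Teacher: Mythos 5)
Your proof is correct and follows essentially the same route as the paper: the first claim rests on the transitivity identity $X'\times_S T = X'\times_{S'}(S'\times_S T)$ (and its analogue for $Y$), which is exactly the paper's argument. The second claim is left as "purely formal" in the paper, and your unwinding of it via the absence of automorphisms of rigidified line bundles and the definition of Weil restriction is a correct way to fill in that formality.
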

\begin{proof}
Let $T$ be any $S$-scheme. Then $X'\times_{S}T=X'\times_{S'}(S'\times_{S} T),$ and a similar identity holds for $Y.$ This shows that the map $\Gamma(X'\times_{S}T, \Og_{X'\times_{S}T}) \to \Gamma(Y\times_{S}T, \Og_{Y\times_{S}T})$ is injective. The proof of the second claim is purely formal and left to the reader. 
\end{proof}\\
Now let $R\subseteq R'$ be a finite flat extension of rings such that $R$ is an Henselian discrete valuation ring and $R'$ is a finite product of Henselian discrete valuation rings. Put $S:=\Spec R,$ $S':=\Spec R'.$ Suppose we have a commutative diagram 
$$\begin{CD}
X'@>{\psi}>> X\\
@VVV@VVV\\
S'@>>> S
\end{CD}$$ 
such that the vertical arrows are proper and flat (hence automatically of finite presentation), and such that the special fibres of $X'\to S'$ have no embedded components. Moreover, assume that $\psi$ is $S$-universally scheme-theoretically dominant and that there exists an open subset $\mathscr{U}\subseteq X$ such that $\psi^{-1}(\mathscr{U})$ is fibre-wise dense in $X'$ and such that $\psi$ is an isomorphism above $\mathscr{U}.$ Then we have the following
\begin{proposition}
There exists a rigidificator $Y\subseteq X$ of $\Pic_{X/S}$ which is contained in $\mathscr{U}$ such that $\psi^{-1}(Y)$ is a rigidificator of $\Pic_{X'/S'}.$ \label{comprigexprop}
\end{proposition}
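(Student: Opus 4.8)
The plan is to construct $Y$ as a finite flat closed subscheme of $X$ supported on finitely many closed points of the special fibre lying inside $\mathscr{U},$ chosen so that it behaves well on both $X$ and $X'.$ First I would recall the standard construction of rigidificators for relative curves over a discrete valuation ring (as in \cite{RaynaudPic}, \cite{BLR}): one picks a finite set of closed points of the special fibre of $X\to S,$ one on each irreducible component, lying in the smooth locus, and spreads them out to sections or quasi-sections over $S$ (possibly after a finite flat cover of $S,$ which is harmless since $S'\to S$ is already such a cover and the statement is compatible with Lemma \ref{RigPicReslem}); the resulting finite flat closed subscheme $Y$ satisfies the injectivity condition $\Gamma(X_T,\Og_{X_T})\hookrightarrow\Gamma(Y_T,\Og_{Y_T})$ for all $S$-schemes $T$ because $Y$ meets every fibral irreducible component. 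The key extra requirement here is to place these points inside $\mathscr{U},$ which is possible because $\psi^{-1}(\mathscr{U})$ is fibre-wise dense in $X',$ hence $\mathscr{U}$ meets every irreducible component of every fibre of $X\to S$ in a dense open.

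The heart of the matter is to verify that $\psi^{-1}(Y)$ is then a rigidificator of $\Pic_{X'/S'}.$ Since $\psi$ is an isomorphism above $\mathscr{U}$ and $Y\subseteq\mathscr{U},$ the map $\psi$ identifies $\psi^{-1}(Y)$ with $Y$ as $S$-schemes; in particular $\psi^{-1}(Y)$ is finite and flat over $S,$ and I would check it is finite flat over $S'$ as well (this is where I would use that $\psi^{-1}(Y)\to S'$ factors the finite flat map $\psi^{-1}(Y)\cong Y\to S$ through the finite flat $S'\to S$, together with the fact that $\psi^{-1}(Y)\to S'$ is finite and $S'$-flat can be read off fibrewise using that $S'$ is a product of discrete valuation rings and $\psi^{-1}(Y)$ is $S$-flat). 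The injectivity condition $\Gamma(X'_{T'},\Og_{X'_{T'}})\hookrightarrow\Gamma(\psi^{-1}(Y)_{T'},\Og_{\psi^{-1}(Y)_{T'}})$ for $S'$-schemes $T'$ is where I expect the main obstacle: one wants that a global function on $X'_{T'}$ vanishing on $\psi^{-1}(Y)_{T'}$ is zero. The hypotheses are tailored for exactly this: $\psi$ is $S$-universally scheme-theoretically dominant, so $\Og_{X_{T}}\to\psi_\ast\Og_{X'_{T}}$ is injective after any base change $T\to S$; the special fibres of $X'\to S'$ having no embedded components ensures that a fibral section of $\Og_{X'}$ vanishing on a fibre-wise dense open (a fortiori on $\psi^{-1}(\mathscr{U})$, which contains $\psi^{-1}(Y)$ generically on each component) is zero; and $Y$ being a rigidificator of $\Pic_{X/S}$ controls functions on $X.$ Concretely, given $g\in\Gamma(X'_{T'},\Og_{X'_{T'}})$ vanishing on $\psi^{-1}(Y)_{T'},$ restrict to $\psi^{-1}(\mathscr{U})_{T'}\cong\mathscr{U}_{T'}$; the absence of embedded components in the fibres of $X'\to S'$ (which persists for a suitable class of base changes, or is handled by reducing to the case of fields and using density) forces $g$ to be determined by its restriction to the fibre-wise dense open $\psi^{-1}(\mathscr{U})_{T'},$ and there it is pulled back from $\mathscr{U}_{T'}\subseteq X_{T'}$; one then extends to a section over $X_{T'}$ using normality/depth hypotheses implicit in "no embedded components" and concludes via the rigidificator property of $Y$ on $X.$

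I would organise the write-up in two steps: (1) the existence of $Y\subseteq\mathscr{U}$ that is a rigidificator of $\Pic_{X/S}$, which is the classical construction restricted to the dense open $\mathscr{U}$; (2) the transfer of the rigidificator property across $\psi,$ which I would reduce, by the usual limit and fibrewise arguments together with Lemma \ref{RigPicReslem}, to checking injectivity of $\Gamma(X'_{T'},\Og)\to\Gamma(\psi^{-1}(Y)_{T'},\Og)$ fibre by fibre over points of $S',$ i.e. over fields, where the no-embedded-components hypothesis and fibre-wise density of $\psi^{-1}(\mathscr{U})$ do the work. The main obstacle, as indicated, is step (2), and specifically making precise the claim that "no embedded components in the special fibres of $X'\to S'$" is exactly the condition needed to propagate the vanishing of a function from the dense open $\psi^{-1}(\mathscr{U})$ to all of $X'_{T'}$ after base change — this is where I would invoke the scheme-theoretic dominance of $\psi$ and be careful about which base changes $T'\to S'$ preserve the relevant depth condition.
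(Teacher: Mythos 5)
Your proposal reverses the direction of the construction, and the step you yourself identify as the main obstacle is a genuine gap rather than a technicality. The hypotheses of the proposition only support a transfer from $X'$ to $X$: the $S$-universal scheme-theoretic dominance of $\psi$ gives an injection $\Gamma(X_T,\Og_{X_T})\hookrightarrow\Gamma(X'_T,\Og_{X'_T})$ for every $S$-scheme $T$, so a rigidificator for $X'$ is automatically one for $X$; it gives no control of functions on $X'$ in terms of functions on $X$. Your attempted repair of the hard direction --- restrict $g\in\Gamma(X'_{T'},\Og_{X'_{T'}})$ to $\psi^{-1}(\mathscr{U})_{T'}$, regard it as a function on $\mathscr{U}_{T'}$, extend it to $X_{T'}$, and invoke the rigidificator property of $Y$ on $X$ --- breaks down at the extension step: no hypothesis gives depth or normality for $X$ (in the intended application $X=\mathscr{C}$ is the pinched, more singular curve, and $\Gamma(X',\Og_{X'})$ is typically strictly larger than $\Gamma(X,\Og_X)$, so functions on $\psi^{-1}(\mathscr{U})\cong\mathscr{U}$ do not extend to $X$). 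There is also a conflation of base changes: $\psi^{-1}(\mathscr{U})\times_{S'}T'$ is identified with $\mathscr{U}\times_{S'}T'$, which is only a closed subscheme of $\mathscr{U}\times_{S}T'$, so the restriction of $g$ is not ``pulled back from $X_{T'}$'' in the sense needed to apply the rigidificator property over $S$. Even your step (1) is shaky as justified: meeting every fibral irreducible component does not by itself give the injectivity $\Gamma(X_T,\Og_{X_T})\hookrightarrow\Gamma(Y_T,\Og_{Y_T})$ for all $T$ if the fibres of $X\to S$ have embedded points, and nothing in the hypotheses excludes embedded points on the fibres of $X\to S$ --- the no-embedded-components assumption is made for $X'\to S'$ only.

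The paper's proof exploits exactly this asymmetry: it constructs the rigidificator on $X'$ over $S'$ first. One chooses closed points $x_j\in\mathscr{U}\cap Z_j$, one on each irreducible component $Z_j$ of the special fibre of $X'\to S$, avoiding intersections of components and taken at points where the fibre over $S'$ is Cohen-Macaulay; Raynaud's construction (\cite{RaynaudPic}, proof of Proposition 2.2.3(b)), which is where the hypothesis on embedded components of the fibres of $X'\to S'$ enters, yields a rigidificator $Y$ of $\Pic_{X'/S'}$ supported set-theoretically on $\{x_1,\dots,x_n\}$. Since $Y$ is semilocal, $Y\subseteq\mathscr{U}$, so $Y$ may be regarded as a closed subscheme of $X$; Lemma \ref{RigPicReslem} shows $Y$ is a rigidificator of $\Pic_{X'/S}$, and then the chain $\Gamma(X_T,\Og_{X_T})\hookrightarrow\Gamma(X'_T,\Og_{X'_T})\hookrightarrow\Gamma(Y_T,\Og_{Y_T})$, the first injection coming from universal scheme-theoretic dominance, shows $Y$ is a rigidificator of $\Pic_{X/S}$. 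To salvage your plan you would have to prove the transfer from $X$ to $X'$, for which I see no argument; reorganising the proof in the paper's order makes both transfers either an application of Raynaud's existence result or purely formal.
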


\begin{proof}
We shall identify $\mathscr{U}$ and $\psi^{-1}({\mathscr{U}}).$ Let $Z_1,$ ..., $Z_n$ be the (reduced) irreducible components of the special fibre of $X'\to S.$ By our assumption on $\mathscr{U},$ the intersection $\mathscr{U}\cap Z_j$ is non-empty for all $j=1,..., n.$ For each $j,$ we choose a closed point $x_j\in \mathscr{U}\cap Z_j$ with image $\mathfrak{m}_j$ in $S'$ which does not lie on the intersection of two irreducible components and such that $\Og_{(X'\times_{R'} \Spec R'/\mathfrak{m_j}), x_j}$ is Cohen-Macaulay for all $j=1,..., n$ \cite[Exposé VI\textsubscript{A}, Lemme 1.1.2]{SGA3}. Raynaud \cite[proof of Proposition 2.2.3(b)]{RaynaudPic} has shown that there exists a rigidificator $Y$ of $\Pic_{X'/S'}$ whose special fibre is set-theoretically equal to $\{x_1,..., x_n\}.$ Because $Y$ is a semilocal scheme, we see that $Y\subseteq \mathscr{U},$ so $Y$ can be regarded as a closed subscheme of $X.$ By the preceding Lemma, we know that $Y$ is a rigidificator of $\Pic_{X'/S}.$ Because $\psi$ is $S$-universally scheme-theoretically dominant, $Y$ is a rigidificator of $\Pic_{X/S}$ as well, as desired. 
\end{proof}\\
By abuse of notation, both $Y$ and $\psi^{-1}(Y)$ will be denoted by $Y,$ which we shall call a \it compatible rigidificator. \rm

\subsection{Semi-factorial models and Picard functors}

Let $\Og_K$ be a complete discrete valuation ring with algebraically closed residue field $k$ and field of fractions $K$ of characteristic $p>0.$ Throughout this section, $C$ will be a reduced proper curve over $K$ with normalisation $\widetilde{C}.$ A scheme $X\to \Spec \Og_K$ will be called \it semi-factorial \rm (cf. \cite[Définition 1.1]{Pép}) if the map $\Pic X \to \Pic X_K$ is surjective.

\begin{definition} A \rm model pair \it of $C$ is a morphism $\psi\colon \widetilde{\mathscr{C}} \to \mathscr{C}$ of schemes over $\Og_K$ such that the following conditions are satisfied: \\
(i) The structure morphisms $\widetilde{f}\colon \widetilde{\mathscr{C}} \to \Spec \Og_K$ and $f\colon \mathscr{C}\to \Spec \Og_K$ are proper and flat,\\
(ii) $\widetilde{\mathscr{C}}$ and $\mathscr{C}$ are models of $\widetilde{C}$ and $C,$ respectively\footnote{i. e. $\mathscr{C}\times_{\Og_K} \Spec K \cong C$ and similarly for $\widetilde{\mathscr{C}}$ and $\widetilde{C}.$ We fix a choice of such an isomorphism.} , and $\psi_K$ is the normalisation morphism,\\
(iii) the scheme $\widetilde{\mathscr{C}}$ is regular,\\
(iv) the morphism $\psi$ is finite, $\Og_K$-universally scheme-theoretically dominant,  and an isomorphism away from a closed subset of $\mathscr{C}$ which is finite and flat over $\Og_K.$\\
Finally, $\psi\colon \widetilde{\mathscr{C}} \to \mathscr{C} $ will be called a \rm strong model pair \it if there exist finite, faithfully flat and regular $\Og_K$-algebras $\mathcal{B} \subseteq \mathcal{A}$ and closed immersions $\Spec \mathcal{A} \to \widetilde{\mathscr{C}}$ and $\Spec \mathcal{B} \to \mathscr{C}$ such that $\psi$ restricts to $\mathcal{B} \to \mathcal{A}$ and such that the diagram 
$$\begin{CD}
\widetilde{\mathscr{C}} @>{\psi}>> \mathscr{C} \\
@AAA@AA{\iota}A\\
\Spec \mathcal{A} @>>> \Spec \mathcal{B},
\end{CD}$$ 
is co-Cartesian.
\end{definition}
\begin{proposition}
Let $\psi\colon \widetilde{\mathscr{C}}\to \mathscr{C}$ be a strong model pair over $\Og_K$ of a proper reduced curve $C$ over $K.$ Then there is an exact sequence
\begin{align*}0 \to V_{\mathscr{C}}^\times  \to V_{\widetilde{\mathscr{C}}}^\times  \to \Res_{\mathcal{A}/\Og_K} \Gm / (\Res_{\mathcal{B}/\Og_K} \Gm) \to \Pic_{\mathscr{C}/\Og_K}\to \Pic_{\widetilde{\mathscr{C}}/\Og_K} \to 0 \end{align*}
on the big étale site of $\Spec \Og_K.$ Moreover, the Abelian sheaves $V_{\mathscr{C}}^\times$ and $V_{\widetilde{\mathscr{C}}}^\times $ are representable by group schemes of finite presentation over $\Og_K.$ \label{Otherpaperprop}
\end{proposition}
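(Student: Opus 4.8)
The plan is to obtain the sequence by sheafifying a relative Mayer--Vietoris sequence attached to the pinching $\mathscr{C}=\widetilde{\mathscr{C}}\cup_{\Spec\mathcal{A}}\Spec\mathcal{B}$ supplied by the strong model pair structure. For an $\Og_K$-scheme $T$ write $\mathscr{C}_T:=\mathscr{C}\times_{\Og_K}T$ and $\widetilde{\mathscr{C}}_T:=\widetilde{\mathscr{C}}\times_{\Og_K}T$. First I would invoke the base-change behaviour of push-outs recalled above (\cite{Ov}, subsection 2.4.2): the co-Cartesian square defining $\mathscr{C}$ remains co-Cartesian after $-\times_{\Og_K}T$, so $\mathscr{C}_T=\widetilde{\mathscr{C}}_T\cup_{\Spec(\mathcal{A}\otimes_{\Og_K}T)}\Spec(\mathcal{B}\otimes_{\Og_K}T)$ is again a conductor square, with $\psi_T\colon\widetilde{\mathscr{C}}_T\to\mathscr{C}_T$ finite and $\Spec(\mathcal{A}\otimes_{\Og_K}T)\hookrightarrow\widetilde{\mathscr{C}}_T$ a closed immersion. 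The Mayer--Vietoris sequence of such a square (classical Milnor patching in the affine case; see also \cite{Ov}, subsection 2.4.2) provides an exact sequence of abelian groups, functorial in $T$,
\begin{align*}
0\to\Gamma(\mathscr{C}_T,\Og)^\times\to\Gamma(\widetilde{\mathscr{C}}_T,\Og)^\times\times(\mathcal{B}\otimes_{\Og_K}T)^\times&\to(\mathcal{A}\otimes_{\Og_K}T)^\times\\
\to\Pic(\mathscr{C}_T)\to\Pic(\widetilde{\mathscr{C}}_T)\times\Pic(\mathcal{B}\otimes_{\Og_K}T)&\to\Pic(\mathcal{A}\otimes_{\Og_K}T);
\end{align*}
concretely it arises from the long exact \'etale cohomology sequence on $\mathscr{C}_T$ of the short exact sequence of sheaves obtained by passing to units in the fibre-product presentation $\Og_{\mathscr{C}_T}=\psi_{T,*}\Og_{\widetilde{\mathscr{C}}_T}\times_{\mathcal{A}\otimes_{\Og_K}T}(\mathcal{B}\otimes_{\Og_K}T)$, the three constituent morphisms to $\mathscr{C}_T$ being finite and hence acyclic.

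I would then sheafify this complex of presheaves on $\Spec\Og_K$ for the big \'etale topology; since sheafification is exact, the result is again exact. The first two terms are already \'etale sheaves (by flat descent for quasi-coherent modules) and are represented by $V_{\mathscr{C}}^\times$ and $V_{\widetilde{\mathscr{C}}}^\times$ via the theory of rigidificators recalled above; the next two sheafify to $\Res_{\mathcal{B}/\Og_K}\Gm$ and $\Res_{\mathcal{A}/\Og_K}\Gm$; the last two to $\Pic_{\mathscr{C}/\Og_K}$ and $\Pic_{\widetilde{\mathscr{C}}/\Og_K}$; and $\Pic_{\Spec\mathcal{B}/\Og_K}$ and $\Pic_{\Spec\mathcal{A}/\Og_K}$ \emph{vanish}, because for every strictly Henselian local $\Og_K$-algebra $R$ the ring $\mathcal{A}\otimes_{\Og_K}R$ is a finite product of local rings and so has trivial Picard group, whence these two sheaves have no nonzero sections over any strictly Henselian local scheme.

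It remains to incorporate the factor $\Res_{\mathcal{B}/\Og_K}\Gm$ into the quotient $Q:=\Res_{\mathcal{A}/\Og_K}\Gm/(\Res_{\mathcal{B}/\Og_K}\Gm)$ and to deal with representability. By the facts about Weil restrictions of $\Gm$ along towers of finite faithfully flat regular extensions recalled above, the map $\Res_{\mathcal{B}/\Og_K}\Gm\to\Res_{\mathcal{A}/\Og_K}\Gm$ induced by $\mathcal{B}\subseteq\mathcal{A}$ is a closed immersion, in particular a monomorphism, and $Q$ is representable. A routine diagram chase (equivalently, the snake lemma applied to an evident short exact sequence of two-term complexes) then identifies the kernel and the cokernel of the connecting map $V_{\widetilde{\mathscr{C}}}^\times\times\Res_{\mathcal{B}/\Og_K}\Gm\to\Res_{\mathcal{A}/\Og_K}\Gm$ with, respectively, the kernel and the cokernel of the induced map $V_{\widetilde{\mathscr{C}}}^\times\to Q$; combining this with the tail $\Res_{\mathcal{A}/\Og_K}\Gm\to\Pic_{\mathscr{C}/\Og_K}\to\Pic_{\widetilde{\mathscr{C}}/\Og_K}\to0$ of the sheafified sequence (available after the vanishing above) produces precisely
$$0\to V_{\mathscr{C}}^\times\to V_{\widetilde{\mathscr{C}}}^\times\to\Res_{\mathcal{A}/\Og_K}\Gm/(\Res_{\mathcal{B}/\Og_K}\Gm)\to\Pic_{\mathscr{C}/\Og_K}\to\Pic_{\widetilde{\mathscr{C}}/\Og_K}\to0,$$
the injectivity on the far left being equivalent to $\Gamma(\mathscr{C}_T,\Og)\hookrightarrow\Gamma(\widetilde{\mathscr{C}}_T,\Og)$, which holds since $\psi$ is $\Og_K$-universally schematically dominant. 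Finally, by the rigidificator theory $V_{\widetilde{\mathscr{C}}}^\times$ is representable by an affine group scheme of finite presentation over the Noetherian ring $\Og_K$ (it embeds as a closed subgroup scheme of $\Res_{Y/\Og_K}\Gm$ for a rigidificator $Y$ of $\Pic_{\widetilde{\mathscr{C}}/\Og_K}$), and $V_{\mathscr{C}}^\times=\ker(V_{\widetilde{\mathscr{C}}}^\times\to Q)$ is a closed subgroup scheme of it ($Q$ being separated), hence is of finite presentation as well.

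I expect the main obstacle to lie in the first step: verifying carefully that the formation of the pinching $\widetilde{\mathscr{C}}\cup_{\Spec\mathcal{A}}\Spec\mathcal{B}$ commutes with arbitrary base change along $\Og_K$-schemes and that the base-changed squares are still conductor squares (one arrow a closed immersion; structure sheaf a fibre product; the relevant map surjective on units), since it is exactly this that legitimises treating the family of Mayer--Vietoris sequences as a complex of presheaves. These are precisely the properties that the word \emph{strong} in the definition of a strong model pair is meant to guarantee, via the push-out results of \cite{Ov}, subsection 2.4.2; granting them, the remaining steps are essentially formal.
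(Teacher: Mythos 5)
Your derivation of the exact sequence is, in substance, the paper's own route: the paper disposes of exactness by citing \cite{Ov}, Proposition 2.30, whose content is precisely the units--Picard (Mayer--Vietoris) sequence attached to the conductor square of a strong model pair, with the verification that the square stays a conductor square after arbitrary base change coming from the push-out results of \cite{Ov}, subsection 2.4.2 (Proposition 2.19). Your expansion of this -- base-change stability of the push-out, acyclicity of the three finite morphisms in the \'etale topology, vanishing of $\Pic(\mathcal{A}\otimes_{\Og_K}R)$ and $\Pic(\mathcal{B}\otimes_{\Og_K}R)$ for strictly Henselian local $R$ because these rings are semilocal, and the diagram chase trading the factor $\Res_{\mathcal{B}/\Og_K}\Gm$ for the quotient $\Res_{\mathcal{A}/\Og_K}\Gm/\Res_{\mathcal{B}/\Og_K}\Gm$ -- is correct and is essentially a proof of the cited result rather than an alternative to it.

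The gap is in your treatment of the final claim. You assert that $V_{\widetilde{\mathscr{C}}}^\times$ ``embeds as a closed subgroup scheme of $\Res_{Y/\Og_K}\Gm$ for a rigidificator $Y$,'' but the rigidificator property only gives injectivity of $\Gamma(\widetilde{\mathscr{C}}_T,\Og)^\times\to\Gamma(Y_T,\Og)^\times$ for every $T$, i.e.\ a monomorphism of group functors; over a base that is not a field a monomorphism of group schemes need not be a closed immersion, and in any case the assertion presupposes the representability of $V_{\widetilde{\mathscr{C}}}^\times$, which is exactly what has to be proved. Note also that the naive closed candidate -- sections of $\Og_{\widetilde{\mathscr{C}}_T}$ whose restriction to $Y_T$ is invertible -- is in general strictly larger than $V_{\widetilde{\mathscr{C}}}^\times(T)$, since a function restricting to a unit on $Y_T$ need not be a unit on $\widetilde{\mathscr{C}}_T$. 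The paper instead quotes \cite{BLR}, Chapter 8.1, Theorem 7 and Corollary 8: the functor $T\mapsto\Gamma(X_T,\Og_{X_T})$ is represented by $\Spec(\mathrm{Sym}\,\mathscr{D})$ for a finitely presented $\Og_K$-module $\mathscr{D}$, and the invertible sections form an open subgroup scheme; this yields representability and finite presentation of both $V_{\mathscr{C}}^\times$ and $V_{\widetilde{\mathscr{C}}}^\times$ directly, with no rigidificator and no appeal to the exact sequence. Your argument can be repaired -- using the module description one checks that $V_{\widetilde{\mathscr{C}}}\to V_Y$ is a closed immersion and then cuts out the units by requiring both $u$ and $u^{-1}$ to lie in $V_{\widetilde{\mathscr{C}}}$ -- but the repair already runs through the \cite{BLR} machinery, so nothing is saved. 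A smaller quibble: the closed-immersion property of $\Res_{\mathcal{B}/\Og_K}\Gm\to\Res_{\mathcal{A}/\Og_K}\Gm$ is not literally the background statement you invoke (that bullet concerns N\'eron lft-models of $\Gm$); it does hold here, because $\mathcal{B}$ is normal, hence $\mathcal{A}/\mathcal{B}$ is $\Og_K$-torsion-free and the condition of lying in $\mathcal{B}\otimes_{\Og_K}T$ is a closed condition, but this deserves its own word of justification.
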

\begin{proof}
The exactness of the sequence follows from \cite[Proposition 2.30]{Ov}; the conditions (i),..., (v) as well as the standing assumption from \it loc. cit. \rm can be verified as in the proof of \cite[Corollary 3.7]{Ov}. The remaining claims follow from \cite[Chapter 8.1, Corollary 8]{BLR} (with the notation from \it loc. cit., \rm the scheme $V$ is of finite presentation over $\Og_K$ because so is the $\Og_K$-module $\mathscr{D}$; see \cite[Chapter 8.1, Theorem 7]{BLR} and \cite[Tag 00DO]{Stacks}).
\end{proof}\\
The main result in this section will be the following generalisation of a theorem of Raynaud \cite[Theorem 3.1]{LLR}: 
\begin{theorem}
Let $C$ be a reduced proper curve over $K$ which admits a strong model pair $\psi\colon \widetilde{\mathscr{C}}\to \mathscr{C}$ over $\Og_K.$ Assume moreover that $\mathscr{C}$ is semi-factorial. Then the kernel and cokernel of the natural morphism  \label{samelengththm}
$$\lambda \colon H^1(\mathscr{C}, \Og_{\mathscr{C}}) \to \Lie P^{\mathrm{sep}}_{\mathscr{C}/\Og_K}$$ have the same length. 
\end{theorem}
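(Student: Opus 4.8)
The plan is to reduce, via the strong model pair, to the case where the model is regular --- which is essentially Raynaud's theorem \cite{LLR}, Theorem 3.1 --- and then to invoke the Grothendieck--Serre duality argument underlying the latter.

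First some harmless reductions. We may assume $C$ geometrically connected, decomposing into connected components otherwise (and using that $\Pic^0$ of a disjoint union is a product). Then $\lambda\otimes_{\Og_K}K$ is the canonical identification $H^1(C,\Og_C)=\Lie\Pic^0_{C/K}$: the source is $H^1(\mathscr{C},\Og_{\mathscr{C}})\otimes_{\Og_K}K$ by flat base change, and the target is $\Lie P^{\mathrm{sep}}_{\mathscr{C}/\Og_K}\otimes_{\Og_K}K$ since the generic fibre of $P_{\mathscr{C}/\Og_K}$ is $\Pic^0_{C/K}$ and the separated quotient does not change the generic fibre (the schematic closure of the unit section has trivial generic fibre). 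Hence $\Ker\lambda$ and $\mathrm{coker}\,\lambda$ have finite length and the assertion is meaningful. Choosing a rigidificator $Y$ of $\Pic_{\mathscr{C}/\Og_K}$ which is a relative effective Cartier divisor so large that $H^0(\mathscr{C},\Og_{\mathscr{C}}(-Y))=0$, the sequence $0\to\Og_{\mathscr{C}}(-Y)\to\Og_{\mathscr{C}}\to\Og_Y\to0$ gives $\Lie(P_{\mathscr{C}/\Og_K},Y)=H^1(\mathscr{C},\Og_{\mathscr{C}}(-Y))$, and the Lie algebra of $0\to V_{\mathscr{C}}^\times\to V_Y^\times\to(P_{\mathscr{C}/\Og_K},Y)\to P_{\mathscr{C}/\Og_K}\to0$ then identifies $H^1(\mathscr{C},\Og_{\mathscr{C}})$ with $\Lie\Pic_{\mathscr{C}/\Og_K}=\Lie P_{\mathscr{C}/\Og_K}$ ($P_{\mathscr{C}/\Og_K}$ being open in $\Pic_{\mathscr{C}/\Og_K}$). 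Under this identification $\lambda$ is induced on Lie algebras by the faithfully flat quotient $P_{\mathscr{C}/\Og_K}\to P^{\mathrm{sep}}_{\mathscr{C}/\Og_K}=P_{\mathscr{C}/\Og_K}/N$, $N$ being the functorial schematic closure of the unit section; since $\Lie$ is left exact $\Ker\lambda=\Lie N$, and we must prove $\ell_{\Og_K}(\Lie N)=\ell_{\Og_K}(\mathrm{coker}\,\lambda)$.

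Now choose a \emph{compatible} rigidificator $Y\subseteq\mathscr{U}$ for $\psi$ (Proposition \ref{comprigexprop}; its hypotheses hold because $\widetilde{\mathscr{C}}$ is regular, so its special fibre --- a Cartier divisor --- has no embedded component, and $\psi$ is finite, $\Og_K$-universally scheme-theoretically dominant, and an isomorphism over the open set of condition (iv)), again a large relative Cartier divisor. Since $\psi^{-1}(Y)=Y$ and $\psi_K$ is the normalisation, hence preserves multidegrees, the exact sequence of Proposition \ref{Otherpaperprop} restricts to degree-zero parts,
$$0\to V_{\mathscr{C}}^\times\to V_{\widetilde{\mathscr{C}}}^\times\to\Res_{\mathcal{A}/\Og_K}\Gm/\Res_{\mathcal{B}/\Og_K}\Gm\to P_{\mathscr{C}/\Og_K}\to P_{\widetilde{\mathscr{C}}/\Og_K}\to0,$$
whose Lie algebra is a $5$-term exact sequence with final terms $H^1(\mathscr{C},\Og_{\mathscr{C}})\to H^1(\widetilde{\mathscr{C}},\Og_{\widetilde{\mathscr{C}}})\to0$ (one recognises it as the Mayer--Vietoris sequence of the co-Cartesian square). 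The crucial point is that the \emph{same} displayed sequence holds after replacing $P_{\mathscr{C}/\Og_K}$ and $P_{\widetilde{\mathscr{C}}/\Og_K}$ by their separated quotients: the first three terms are separated, hence unchanged, and --- this is where semi-factoriality of $\mathscr{C}$ enters --- one must show that the failure of separatedness of $\Pic_{\mathscr{C}/\Og_K}$, i.e. the group $N=\overline{\{e\}}$, is entirely inherited from the regular model $\widetilde{\mathscr{C}}$; the compatible rigidificator is the tool for this, allowing one to express the separated quotients of $P_{\mathscr{C}/\Og_K}$ and $P_{\widetilde{\mathscr{C}}/\Og_K}$ through the rigidified Picard functors $(P_{\mathscr{C}/\Og_K},Y)$, $(P_{\widetilde{\mathscr{C}}/\Og_K},Y)$ and the (separated) groups $V_Y^\times$, $V^\times$ in a way compatible with the gluing. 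Granting this, comparing the two $5$-term Lie sequences via the identity on the first three terms and via $\lambda_{\mathscr{C}},\lambda_{\widetilde{\mathscr{C}}}$ on the last two, the snake lemma gives $\Ker\lambda_{\mathscr{C}}\cong\Ker\lambda_{\widetilde{\mathscr{C}}}$ and $\mathrm{coker}\,\lambda_{\mathscr{C}}\cong\mathrm{coker}\,\lambda_{\widetilde{\mathscr{C}}}$. The theorem for $\mathscr{C}$ is thereby reduced to the theorem for the regular model $\widetilde{\mathscr{C}}$.

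Finally, $\widetilde{\mathscr{C}}$ is regular, hence automatically semi-factorial (extend a divisor on $\widetilde{C}$ by closure to a Cartier divisor), and when $\widetilde{C}$ is smooth over its ring of global sections the equality $\ell_{\Og_K}(\Ker\lambda_{\widetilde{\mathscr{C}}})=\ell_{\Og_K}(\mathrm{coker}\,\lambda_{\widetilde{\mathscr{C}}})$ is Raynaud's theorem \cite{LLR}, Theorem 3.1. Its proof rests on Grothendieck--Serre duality for $\widetilde{f}\colon\widetilde{\mathscr{C}}\to\Spec\Og_K$: the torsion submodule of $H^1(\widetilde{\mathscr{C}},\Og_{\widetilde{\mathscr{C}}})$, which measures the defect of cohomological flatness in dimension zero and equals $\Ker\lambda_{\widetilde{\mathscr{C}}}=\Lie N_{\widetilde{\mathscr{C}}}$, is $\Ext^1_{\Og_K}(-,\Og_K)$-dual --- hence of equal length --- to the torsion of $H^1(\widetilde{\mathscr{C}},\omega_{\widetilde{\mathscr{C}}/\Og_K})$, and the latter torsion accounts (after dualising $\lambda_{\widetilde{\mathscr{C}}}$ modulo torsion, which becomes the comparison of the lattices $H^0(\widetilde{\mathscr{C}},\omega_{\widetilde{\mathscr{C}}/\Og_K})$ and the invariant differentials of $P^{\mathrm{sep}}_{\widetilde{\mathscr{C}}/\Og_K}$ inside $H^0(\widetilde{C},\omega_{\widetilde{C}/K})$) for $\mathrm{coker}\,\lambda_{\widetilde{\mathscr{C}}}$; the two are exchanged by the duality. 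Since $\Og_K$ is complete, hence excellent, $\delta(K)=1$ (Lemma \ref{plusonelem}), so $\widetilde{C}$ may fail to be smooth over $\Gamma(\widetilde{C},\Og_{\widetilde{C}})$; this is treated as in Raynaud's argument, using that $\widetilde{C}$ is Stein reduced (Proposition \ref{Steinredprop}) and the seminormality results of Section 2. The hardest step is the separated-quotient version of the gluing sequence --- equivalently, pinning down $\overline{\{e\}}$ in $P_{\mathscr{C}/\Og_K}$ --- via semi-factoriality; the duality input for the regular case is essentially Raynaud's.
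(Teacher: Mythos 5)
There is a genuine gap, and in fact the central step of your reduction is false. Your plan hinges on the ``crucial point'' that the five-term gluing sequence of Proposition \ref{Otherpaperprop} survives, with exact Lie algebras, after replacing $P_{\mathscr{C}/\Og_K}$ and $P_{\widetilde{\mathscr{C}}/\Og_K}$ by their separated quotients; granting it, your snake-lemma comparison would show that the natural maps $\ker\lambda_{\mathscr{C}}\to\ker\lambda_{\widetilde{\mathscr{C}}}$ and $\mathrm{coker}\,\lambda_{\mathscr{C}}\to\mathrm{coker}\,\lambda_{\widetilde{\mathscr{C}}}$ are isomorphisms, i.e.\ that $\psi^\ast\colon\mathbf{L}(\mathscr{C})\to\mathbf{L}(\widetilde{\mathscr{C}})$ is always a quasi-isomorphism. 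By Proposition \ref{qiprop} this is equivalent to exactness of $0\to\mathscr{T}\to\mathscr{N}\to\widetilde{\mathscr{N}}\to0$ (equivalently, smoothness of $\mathscr{N}\to\widetilde{\mathscr{N}}$, which is exactly what the surjectivity of $\Lie P^{\mathrm{sep}}_{\mathscr{C}/\Og_K}\to\Lie P^{\mathrm{sep}}_{\widetilde{\mathscr{C}}/\Og_K}$ demanded by your exact separated-quotient sequence amounts to), and the example of Paragraph \ref{examplepar} shows this fails: there the map $H^1(\mathscr{X},\Og_{\mathscr{X}})_{\mathrm{tors}}\to H^1(\widetilde{\mathscr{X}},\Og_{\widetilde{\mathscr{X}}})_{\mathrm{tors}}$, i.e.\ $\ker\lambda_{\mathscr{X}}\to\ker\lambda_{\widetilde{\mathscr{X}}}$, is not an isomorphism. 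Semi-factoriality of $\mathscr{C}$ does not force the non-separatedness of $\Pic_{\mathscr{C}/\Og_K}$ to be ``inherited'' from $\widetilde{\mathscr{C}}$; the unit-section closures $\mathscr{H}_{\mathscr{C}}$ and $\mathscr{H}_{\widetilde{\mathscr{C}}}$ are genuinely different objects, and this discrepancy is precisely what makes both Theorem \ref{samelengththm} and Chai's conjecture for Jacobians non-trivial (as remarked in the introduction, the conjecture would be immediate if the reduction you propose were available). The step is also left unproved in your write-up (``Granting this\dots''), so nothing salvages the argument.

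The actual proof does not reduce to the regular model at all: it runs the Liu--Lorenzini--Raynaud argument directly on the semi-factorial, non-regular $\mathscr{C}$. Concretely, one takes a compatible rigidificator (Proposition \ref{comprigexprop}), proves that $(P_{\mathscr{C}/\Og_K},Y)^0$ is a separated scheme via Lemma \ref{Rigexseqlem}, shows that $V_Y^\times/\Res_{\Gamma(\mathscr{C},\Og_{\mathscr{C}})/\Og_K}\Gm\to\widetilde{\mathscr{H}}_{\mathscr{C}}$ is an open immersion --- the new input here being the finite generation statement of Lemma \ref{Tfingenlem}, needed because $\mathscr{C}$ is not regular --- and then compares the two exact rows on Lie algebras. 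Semi-factoriality enters only at the very end, to guarantee that $(P_{\mathscr{C}/\Og_K},Y)^0(\Og_K)\to P^{\mathrm{sep},0}_{\mathscr{C}/\Og_K}(\Og_K)$ is surjective, after which the length identity $\ell(\ker\lambda)=\ell(\mathrm{coker}\,\omega)=\ell(\mathrm{coker}\,\pi)=\ell(\mathrm{coker}\,\lambda)$ follows from \cite{LLR}, Theorem 2.1(a); no duality for $\widetilde{f}$ is invoked at this point (your description of the proof of \cite{LLR}, Theorem 3.1 via Grothendieck--Serre duality is also not how that argument goes, but this is secondary to the failure of the reduction itself).
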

The proof will proceed along similar lines as that of \it loc. cit. \rm 
Given a strong model pair $\psi\colon \widetilde{\mathscr{C}} \to \mathscr{C}$ of $C,$ a compatible rigidificator $Y$ is a closed subscheme $Y\subseteq {\mathscr{C}}$ finite and flat over $\Og_K$ which is contained in an open subset of $\mathscr{C}$ above which $\psi$ is an isomorphism, such that $Y$ is a rigidificator of $\mathscr{C}$ (with respect to $\Og_K$), and a rigidificator of $\widetilde{\mathscr{C}}$ (with respect to $\Gamma(\widetilde{\mathscr{C}}, \Og_{\widetilde{\mathscr{C}}}),$ and hence also with respect to $\Og_K$). Such an object exists by Proposition \ref{comprigexprop}. Indeed, the special fibres of $\widetilde{\mathscr{C}} \to \Spec \Gamma(\widetilde{\mathscr{C}}, \Og_{\widetilde{\mathscr{C}}})$ are Cohen-Macaulay \cite[Tag 02JN]{Stacks} and therefore have no embedded components \cite[Tag 0BXG]{Stacks}. Moreover, the morphism $\psi$ is $\Og_K$-universally scheme-theoretically dominant because taking the push-out along $\Spec \mathcal{A} \to \Spec \mathcal{B}$ commutes with arbitrary base change \cite[Proposition 2.19]{Ov}. 
We obtain a canonical morphism
$$(P_{\mathscr{C}/\Og_K}, Y) \to (P_{\widetilde{\mathscr{C}}/\Og_K}, Y).$$ Now recall that we have a canonical exact sequence
\begin{align}0 \to V_{\mathscr{C}}^\times \to V_Y^\times \to (P_{\mathscr{C}/\Og_K}, Y) \to P_{\mathscr{C}/\Og_K} \to 0\label{CanonicalExactSequence}\end{align} in the étale topology, and similarly for $\widetilde{\mathscr{C}}$ \cite[Propositions 2.1.2 and 2.4.1]{RaynaudPic}.
\begin{lemma}
Let $Y$ be a compatible rigidificator of the strong model pair $\psi\colon \widetilde{\mathscr{C}} \to \mathscr{C}.$ \label{Rigexseqlem} Then we have a canonical exact sequence
$$0\to \Res_{\mathcal{A}/\Og_K} \Gm /  \Res_{\mathcal{B}/\Og_K} \Gm \to (P_{\mathscr{C}/\Og_K}, Y) \to (P_{\widetilde{\mathscr{C}}/\Og_K}, Y) \to 0.$$
\end{lemma}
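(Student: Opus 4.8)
The strategy is to produce the asserted four-term exact sequence by chasing the two canonical exact sequences (\ref{CanonicalExactSequence}) for $\mathscr{C}$ and for $\widetilde{\mathscr{C}}$, together with the five-term sequence of Proposition \ref{Otherpaperprop}. Since $Y$ is a \emph{compatible} rigidificator, it sits inside the open subset where $\psi$ is an isomorphism, so the groups $V_Y^\times=\Res_{Y/\Og_K}\Gm$ attached to $Y\subseteq\mathscr{C}$ and to $\psi^{-1}(Y)\subseteq\widetilde{\mathscr{C}}$ are canonically the \emph{same} group scheme. This is the key point that makes the comparison possible: the map of rigidified Picard functors $(P_{\mathscr{C}/\Og_K},Y)\to(P_{\widetilde{\mathscr{C}}/\Og_K},Y)$ fits into a morphism of exact sequences in which the $V_Y^\times$ columns agree.

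First I would write down the commutative diagram with exact rows
$$\begin{CD}
0 @>>> V_{\mathscr{C}}^\times @>>> V_Y^\times @>>> (P_{\mathscr{C}/\Og_K},Y) @>>> P_{\mathscr{C}/\Og_K} @>>> 0\\
@. @VVV @| @VVV @VVV @.\\
0 @>>> V_{\widetilde{\mathscr{C}}}^\times @>>> V_Y^\times @>>> (P_{\widetilde{\mathscr{C}}/\Og_K},Y) @>>> P_{\widetilde{\mathscr{C}}/\Og_K} @>>> 0
\end{CD}$$
of sheaves on the big \'etale site of $\Spec\Og_K$. Breaking each four-term row into two short exact sequences via the image sheaf $Q_{\mathscr{C}}:=\operatorname{coker}(V_{\mathscr{C}}^\times\to V_Y^\times)$ (and similarly $Q_{\widetilde{\mathscr{C}}}$), one gets short exact sequences $0\to Q_{\mathscr{C}}\to(P_{\mathscr{C}/\Og_K},Y)\to P_{\mathscr{C}/\Og_K}\to 0$ and likewise for $\widetilde{\mathscr{C}}$, plus $0\to V_{\mathscr{C}}^\times\to V_Y^\times\to Q_{\mathscr{C}}\to 0$. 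Since the middle terms $V_Y^\times$ coincide, the snake lemma applied to the two sequences $0\to V_{\bullet}^\times\to V_Y^\times\to Q_{\bullet}\to 0$ identifies $\ker(Q_{\mathscr{C}}\to Q_{\widetilde{\mathscr{C}}})$ and $\operatorname{coker}(Q_{\mathscr{C}}\to Q_{\widetilde{\mathscr{C}}})$ with the corresponding kernel and cokernel of $V_{\mathscr{C}}^\times\to V_{\widetilde{\mathscr{C}}}^\times$. By Proposition \ref{Otherpaperprop} this kernel vanishes and the cokernel is $\Res_{\mathcal{A}/\Og_K}\Gm/\Res_{\mathcal{B}/\Og_K}\Gm$; moreover $P_{\mathscr{C}/\Og_K}\to P_{\widetilde{\mathscr{C}}/\Og_K}$ is surjective with the same kernel (again Proposition \ref{Otherpaperprop}, restricted to the degree-zero part — the identity components of $\Pic_{\mathscr{C}/\Og_K}$ and $\Pic_{\widetilde{\mathscr{C}}/\Og_K}$ — where the outer two terms $V_{\mathscr{C}}^\times,V_{\widetilde{\mathscr{C}}}^\times$ of that five-term sequence do not interfere).

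Next I would assemble these. A diagram chase (or a double application of the snake lemma to the $3\times 2$ grid with rows the two short exact sequences $0\to Q_\bullet\to(P_\bullet,Y)\to P_\bullet\to 0$) shows that $(P_{\mathscr{C}/\Og_K},Y)\to(P_{\widetilde{\mathscr{C}}/\Og_K},Y)$ is surjective, and that its kernel sits in an exact sequence with $\ker(Q_{\mathscr{C}}\to Q_{\widetilde{\mathscr{C}}})=0$ on one side and $\ker(P_{\mathscr{C}/\Og_K}\to P_{\widetilde{\mathscr{C}}/\Og_K})$ on the other; but by the snake-lemma identification above, that last kernel is exactly the image of $\operatorname{coker}(Q_{\mathscr{C}}\to Q_{\widetilde{\mathscr{C}}})=\Res_{\mathcal{A}/\Og_K}\Gm/\Res_{\mathcal{B}/\Og_K}\Gm$ inside $P_{\mathscr{C}/\Og_K}$, and the remaining connecting maps fit together to give precisely
$$0\to \Res_{\mathcal{A}/\Og_K}\Gm/\Res_{\mathcal{B}/\Og_K}\Gm\to (P_{\mathscr{C}/\Og_K},Y)\to (P_{\widetilde{\mathscr{C}}/\Og_K},Y)\to 0.$$

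**Main obstacle.** The routine part is the homological algebra; the point that needs care is the bookkeeping about \emph{which} degree-zero subfunctors and \emph{which} ambient group schemes $V_Y^\times$ are being compared. Concretely, one must check that the canonical exact sequence (\ref{CanonicalExactSequence}) for $\widetilde{\mathscr{C}}$ — a priori formulated over $\Gamma(\widetilde{\mathscr{C}},\Og_{\widetilde{\mathscr{C}}})$ — descends to $\Og_K$ with the \emph{same} $V_Y^\times$ as for $\mathscr{C}$, which is where Lemma \ref{RigPicReslem} (that $\psi^{-1}(Y)$ is a rigidificator of $\Pic_{\widetilde{\mathscr{C}}/\Og_K}$ and that the rigidified Picard functor is a Weil restriction) and the hypothesis that $Y$ lies in the locus where $\psi$ is an isomorphism are both used. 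One also needs that forming $P$ from $\Pic$ (cutting out the degree-zero part via $\ker\deg$) is compatible with $\psi$, i.e. that $\psi$ induces $P_{\mathscr{C}/\Og_K}\to P_{\widetilde{\mathscr{C}}/\Og_K}$; this follows because $\psi_K$ is the normalisation and pullback of line bundles along $\widetilde{C}\to C$ is compatible with the degree functions used to define $P$ (so the relevant square of degree maps commutes), and because on generic fibres the map is the known surjection $\Pic^0_{C/K}\to\Pic^0_{\widetilde{C}/K}$. Once these compatibilities are in place, the exactness is forced by Proposition \ref{Otherpaperprop}.
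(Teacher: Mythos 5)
Your preliminary reductions (using the \emph{same} $V_Y^\times$ for both curves via Lemma \ref{RigPicReslem} and the fact that $Y$ lies in the locus where $\psi$ is an isomorphism, and the compatibility of the degree-zero parts) are correct and are indeed the observations the paper's argument rests on; but the homological core of your chase contains genuine errors. First, Proposition \ref{Otherpaperprop} is a \emph{five}-term exact sequence, so it does not say that $\mathrm{coker}(V_{\mathscr{C}}^\times\to V_{\widetilde{\mathscr{C}}}^\times)$ equals $\mathscr{R}:=\Res_{\mathcal{A}/\Og_K}\Gm/\Res_{\mathcal{B}/\Og_K}\Gm$; it says that this cokernel, namely $W:=V_{\widetilde{\mathscr{C}}}^\times/V_{\mathscr{C}}^\times$, embeds into $\mathscr{R}$ with quotient $\ker(P_{\mathscr{C}/\Og_K}\to P_{\widetilde{\mathscr{C}}/\Og_K})$. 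If that cokernel were all of $\mathscr{R}$, the map $\Pic_{\mathscr{C}/\Og_K}\to\Pic_{\widetilde{\mathscr{C}}/\Og_K}$ would be injective, i.e.\ the Jacobian of $C$ would have no toric part coming from the singularities --- exactly the opposite of the situation the lemma is needed for; likewise $\ker(P_{\mathscr{C}/\Og_K}\to P_{\widetilde{\mathscr{C}}/\Og_K})$ is $\mathscr{R}/W$, not $\mathscr{R}$. Second, the snake lemma applied to the two sequences $0\to V_{\bullet}^\times\to V_Y^\times\to Q_{\bullet}\to 0$ does not match kernel with kernel and cokernel with cokernel: the connecting map identifies $\ker(Q_{\mathscr{C}}\to Q_{\widetilde{\mathscr{C}}})$ with $\mathrm{coker}(V_{\mathscr{C}}^\times\to V_{\widetilde{\mathscr{C}}}^\times)=W$ (nonzero whenever $\Gamma(\widetilde{\mathscr{C}},\Og_{\widetilde{\mathscr{C}}})^\times\neq\Gamma({\mathscr{C}},\Og_{{\mathscr{C}}})^\times$), and it forces $\mathrm{coker}(Q_{\mathscr{C}}\to Q_{\widetilde{\mathscr{C}}})=0$, never $\mathscr{R}$. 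So the two inputs of your final chase, $\ker(Q_{\mathscr{C}}\to Q_{\widetilde{\mathscr{C}}})=0$ and $\mathrm{coker}(Q_{\mathscr{C}}\to Q_{\widetilde{\mathscr{C}}})=\mathscr{R}$, are both false; that the errors happen to cancel and land on the right answer is not a proof.

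Even after correcting the bookkeeping, your argument only shows that $(P_{\mathscr{C}/\Og_K},Y)\to(P_{\widetilde{\mathscr{C}}/\Og_K},Y)$ is surjective with kernel an extension of $\mathscr{R}/W$ by $W$. That is not yet the asserted \emph{canonical} sequence: its first map is a specific morphism $\mathscr{R}\to(P_{\mathscr{C}/\Og_K},Y)$ which you never construct (it exists because $Y$ sits in the isomorphism locus of $\psi$, so a line bundle on $\mathscr{C}$ obtained by glueing the trivial bundle on $\widetilde{\mathscr{C}}$ along a unit of $\mathcal{A}$ carries a canonical rigidification along $Y$), and one must still see that this map is an isomorphism onto the kernel, which the abstract extension statement does not give. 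The paper resolves both points at once: it assembles the rows $0\to W\to\mathscr{R}\to\mathscr{R}/W\to 0,$ $0\to V_Y^\times/V_{\mathscr{C}}^\times\to(P_{\mathscr{C}/\Og_K},Y)\to P_{\mathscr{C}/\Og_K}\to 0,$ and $0\to V_Y^\times/V_{\widetilde{\mathscr{C}}}^\times\to(P_{\widetilde{\mathscr{C}}/\Og_K},Y)\to P_{\widetilde{\mathscr{C}}/\Og_K}\to 0$ into a commutative $3\times 3$ diagram whose left column is obviously exact, whose right column is exact by Proposition \ref{Otherpaperprop}, and whose middle column is the sequence of the lemma; since that column is a complex, the nine lemma gives its exactness. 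Your compatibility remarks feed into setting up this diagram, but the snake-lemma portion of your write-up needs to be replaced by an argument of this kind.
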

\begin{proof}
Put $\mathscr{R}:=\Res_{\mathcal{A}/\Og_K} \Gm /  \Res_{\mathcal{B}/\Og_K} \Gm$ and consider the commutative diagram
$$\begin{CD}
&&0&&0&&0\\
&&@VVV@VVV@VVV\\
0@>>> V_{\widetilde{\mathscr{C}}}^\times/V_{\mathscr{C}}^\times @>>> \mathscr{R} @>>> \mathscr{R}/(V_{\widetilde{\mathscr{C}}}^\times/V_{\mathscr{C}}^\times)@>>> 0 \\
&&@VVV@VVV@VVV\\
0@>>>V_Y^\times/V_{\mathscr{C}}^\times @>>> (P_{\mathscr{C}/\Og_K}, Y) @>>> P_{\mathscr{C}/\Og_K} @>>> 0\\
&&@VVV@VVV@VVV \\
0@>>>V_Y^\times/V_{\widetilde{\mathscr{C}}}^\times @>>> (P_{\widetilde{\mathscr{C}}/\Og_K}, Y) @>>> P_{\widetilde{\mathscr{C}}/\Og_K} @>>> 0 \\
&&@VVV@VVV@VVV\\
&&0&&0&&0.
\end{CD}$$
The rows in this diagram are exact by the exact sequence (\ref{CanonicalExactSequence}) (the top one being obviously exact); the left vertical column is again clearly exact while the right vertical column is exact by Proposition \ref{Otherpaperprop}. Moreover, the central vertical column is a complex, which implies that it is exact as well. 
\end{proof}
\begin{lemma}
The algebraic space $(P_{\mathscr{C}/\Og_K}, Y)^0$ is a separated scheme over $\Og_K.$ 
\end{lemma}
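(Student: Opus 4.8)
The goal is to show that the identity component $(P_{\mathscr{C}/\Og_K}, Y)^0$ is a separated scheme. The strategy is to use the exact sequence from Lemma \ref{Rigexseqlem} to reduce the assertion to statements about the two ``ends'' of that sequence, namely $\mathscr{R} = \Res_{\mathcal{A}/\Og_K}\Gm / \Res_{\mathcal{B}/\Og_K}\Gm$ and $(P_{\widetilde{\mathscr{C}}/\Og_K}, Y)^0$, both of which are easier to control. First I would observe that, by one of the background results recalled in Section~2.3 (the quotient of Néron lft-models of tori $\Res_{\mathcal{B}/\Og_K}\Gm \to \Res_{\mathcal{A}/\Og_K}\Gm$ being a closed immersion with quotient again a Néron lft-model), $\mathscr{R}$ is a smooth separated \emph{scheme} over $\Og_K$ — in fact the Néron lft-model of its generic fibre, hence in particular its identity component is a torus-like object with connected fibres and is a scheme. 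Next, for $(P_{\widetilde{\mathscr{C}}/\Og_K}, Y)$: since $\widetilde{\mathscr{C}}$ is regular and $\psi_K$ is the normalisation, one reduces (via Lemma \ref{RigPicReslem}, which identifies $(\Pic_{\widetilde{\mathscr{C}}/\Og_K}, Y)$ with a Weil restriction of the rigidified Picard functor over $\Gamma(\widetilde{\mathscr{C}}, \Og_{\widetilde{\mathscr{C}}})$) to the case of a \emph{regular} relative curve, where Raynaud's theory (\cite{RaynaudPic}, and the use made of it in \cite{LLR}) gives that the relevant identity component is a separated scheme.

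The key step is then to combine these. From Lemma \ref{Rigexseqlem} we have a short exact sequence of fppf (or étale) sheaves
$$0 \to \mathscr{R} \to (P_{\mathscr{C}/\Og_K}, Y) \to (P_{\widetilde{\mathscr{C}}/\Og_K}, Y) \to 0,$$
and passing to identity components gives an extension of $(P_{\widetilde{\mathscr{C}}/\Og_K}, Y)^0$ by $\mathscr{R}^0$ (using that $\mathscr{R}$ has connected fibres, so the map on $\pi_0$ is an iso onto its image, or at worst controlling the component groups explicitly). An extension of a separated group scheme by a separated (affine, or at least affine-over-$\Og_K$) group scheme is again a separated group scheme: representability of the quotient/extension follows from the representability results for fppf-quotients of group schemes over Dedekind bases recalled in Section~2.3 (\cite{An}), together with descent, and separatedness follows because it can be checked on the unit section, which is the pullback of the unit sections in the two ends. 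So $(P_{\mathscr{C}/\Og_K}, Y)^0$ is a separated scheme.

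The main obstacle I anticipate is \emph{separatedness} of $(P_{\mathscr{C}/\Og_K}, Y)$ itself (before passing to identity components), or equivalently controlling the non-reduced/non-separated behaviour that the rigidified Picard functor of a singular curve can exhibit: the functor $(\Pic_{\mathscr{C}/\Og_K}, Y)$ need not be separated in general, and the point of restricting to the identity component is precisely to kill the pathology. So the delicate part is to check that the schematic closure of the unit section of $(P_{\mathscr{C}/\Og_K}, Y)$ meets the identity component trivially — here one uses that $\mathscr{R}^0$ is the Néron-model identity component of a torus (hence separated, with no infinitesimal or disconnected obstruction) and that $(P_{\widetilde{\mathscr{C}}/\Og_K}, Y)^0$ is separated because $\widetilde{\mathscr{C}}$ is regular, so that the extension sits between two ``good'' objects. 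A secondary point to be careful about is that $\Gamma(\widetilde{\mathscr{C}}, \Og_{\widetilde{\mathscr{C}}})$ is only a finite product of discrete valuation rings rather than a single one, so the reduction to the regular-curve case uses Lemma \ref{RigPicReslem} and the good behaviour of Weil restriction along finite flat maps (representability, smoothness, preservation of connected fibres) recalled in Section~2.3, but this is routine once set up.
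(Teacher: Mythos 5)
Your proposal is correct and follows essentially the paper's own route: the regular case via Lemma \ref{RigPicReslem} together with \cite{LLR}, Proposition 3.2, then the exact sequence of Lemma \ref{Rigexseqlem} to sandwich the object between $\mathscr{R}=\Res_{\mathcal{A}/\Og_K}\Gm/\Res_{\mathcal{B}/\Og_K}\Gm$ and $(P_{\widetilde{\mathscr{C}}/\Og_K},Y)^0$, separatedness via the unit section, and scheme-ness over the Dedekind base $\Spec \Og_K$ from Anantharaman. The paper sidesteps the $\pi_0$/surjectivity point you flag by taking $\mathscr{G}$ to be the preimage of $(P_{\widetilde{\mathscr{C}}/\Og_K},Y)^0$ (an extension of it by all of $\mathscr{R}$), deducing that $\mathscr{G}$ is separated and hence a scheme by Th\'eor\`eme 4.B of \cite{An} (separated group algebraic spaces over such a base are schemes -- this, rather than the quotient-representability result, is the input you want), and then observing that $(P_{\mathscr{C}/\Og_K},Y)^0$ is an open subspace of $\mathscr{G}$; note also that $\mathscr{R}$ is not the N\'eron lft-model of its generic fibre (that would be built from $\NGm$, not $\Gm$), though it is indeed smooth and separated, which is all your argument needs.
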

\begin{proof}
If $\mathscr{C}=\widetilde{\mathscr{C}}$ is regular, this follows from \cite[Proposition 3.2]{LLR} together with Lemma \ref{RigPicReslem}. Consider the exact sequence
$$0 \to \Res_{\mathcal{A}/\Og_K} \Gm /  \Res_{\mathcal{B}/\Og_K} \Gm \to (P_{\mathscr{C}/\Og_K}, Y) \to (P_{\widetilde{\mathscr{C}}/\Og_K}, Y) \to 0$$ from Lemma \ref{Rigexseqlem}, which induces an exact sequence
$$0 \to \Res_{\mathcal{A}/\Og_K} \Gm /  \Res_{\mathcal{B}/\Og_K} \Gm \to \mathscr{G} \to (P_{\widetilde{\mathscr{C}}/\Og_K}, Y)^0 \to 0.$$
Then $\mathscr{G}$ is a group space over $\Og_K,$ and because $(P_{\widetilde{\mathscr{C}}}, Y)^0$ is separated over $\Og_K$ \cite[Proposition 3.2]{LLR}, so is $\mathscr{G}.$ In particular, $\mathscr{G}$ is a scheme \cite[Théorème 4.B]{An}. Clearly, $(P_{\mathscr{C}}, Y)^0$ is an open subspace of $\mathscr{G}$ and hence a scheme as well. 
\end{proof}
\begin{lemma}
Let $T$ be the Abelian group of all line bundles $\mathscr{L}$ on $\mathscr{C}$ (up to isomorphism) such that both $\mathscr{L}\mid_{C}$ and $\psi^\ast \mathscr{L}$ are trivial. Then $T$ is finitely generated. \label{Tfingenlem}
\end{lemma}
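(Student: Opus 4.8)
The plan is to identify $T$ by means of the Mayer--Vietoris sequence attached to the co-Cartesian square of the strong model pair, and then to deduce finite generation from the structure of finite flat regular algebras over $\Og_K$.

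First I would use that $\mathscr{C}=\widetilde{\mathscr{C}}\cup_{\Spec\mathcal{A}}\Spec\mathcal{B}$ to write down the pinching (Mayer--Vietoris) exact sequence
$$\Gamma(\widetilde{\mathscr{C}},\Og_{\widetilde{\mathscr{C}}})^\times\times\mathcal{B}^\times\longrightarrow\mathcal{A}^\times\longrightarrow\Pic\mathscr{C}\longrightarrow\Pic\widetilde{\mathscr{C}}\times\Pic\mathcal{B},$$
exact at $\mathcal{A}^\times$ and at $\Pic\mathscr{C}$, and — since forming the push-out commutes with the flat base change $\Og_K\subseteq K$ (\cite{Ov}, Proposition 2.19) — the analogous sequence for $C=\widetilde{C}\cup_{\Spec\mathcal{A}_K}\Spec\mathcal{B}_K$, compatibly with restriction to the generic fibre. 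Because $\mathcal{A}$ and $\mathcal{B}$ are regular, finite and flat over the discrete valuation ring $\Og_K$, they are finite products of discrete valuation rings, so $\Pic\mathcal{A}=\Pic\mathcal{B}=0$, and likewise $\Pic\mathcal{A}_K=\Pic\mathcal{B}_K=0$. Writing $N:=\im(\Gamma(\widetilde{\mathscr{C}},\Og_{\widetilde{\mathscr{C}}})^\times\times\mathcal{B}^\times\to\mathcal{A}^\times)$ and $N_K:=\im(\Gamma(\widetilde{C},\Og_{\widetilde{C}})^\times\times\mathcal{B}_K^\times\to\mathcal{A}_K^\times)$, the sequences yield $\ker(\psi^\ast\colon\Pic\mathscr{C}\to\Pic\widetilde{\mathscr{C}})\cong\mathcal{A}^\times/N$ and an injection $\mathcal{A}_K^\times/N_K\hookrightarrow\Pic C$. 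Since triviality of $\psi^\ast\mathscr{L}$ forces that of $\psi_K^\ast(\mathscr{L}\mid_C)$, an $\mathscr{L}\in T$ is precisely a class in $\mathcal{A}^\times/N$ represented by some $u\in\mathcal{A}^\times$ whose image in $\mathcal{A}_K^\times$ lies in $N_K$; hence $T\cong(\mathcal{A}^\times\cap N_K)/N$, the intersection being taken inside $\mathcal{A}_K^\times$ (note $N\subseteq\mathcal{A}^\times\cap N_K$).

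Second, I would use the structure of $R':=\Gamma(\widetilde{\mathscr{C}},\Og_{\widetilde{\mathscr{C}}})$: it is normal (as $\widetilde{\mathscr{C}}$ is regular), and by flat base change its total ring of fractions is $\Gamma(\widetilde{C},\Og_{\widetilde{C}})$, a finite product of finite field extensions of $K$; hence $R'$ is the integral closure of $\Og_K$ therein, again a finite product of discrete valuation rings. Therefore $\Gamma(\widetilde{C},\Og_{\widetilde{C}})^\times/(R')^\times$ and $\mathcal{B}_K^\times/\mathcal{B}^\times$ are free abelian of finite rank, with bases given by finitely many uniformisers. Let $H\subseteq\mathcal{A}_K^\times$ be the finitely generated subgroup generated by the images of all these uniformisers. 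Given $u\in\mathcal{A}^\times\cap N_K$, writing $u=\phi\mu$ with $\phi\in\im(\Gamma(\widetilde{C},\Og_{\widetilde{C}})^\times)$ and $\mu\in\im(\mathcal{B}_K^\times)$ and factoring each of $\phi,\mu$ into a unit part — which lies in the image of $(R')^\times$, resp. $\mathcal{B}^\times$, hence in $N$ — and a monomial in uniformisers, one obtains $[u]=[h]$ in $\mathcal{A}^\times/N$ for some $h\in H\cap\mathcal{A}^\times$. Thus $T$ is the image of $H\cap\mathcal{A}^\times$ in $\mathcal{A}^\times/N$: a quotient of a subgroup of the finitely generated group $H$, hence finitely generated.

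The step I expect to cause the most trouble is the first: making precise the Mayer--Vietoris sequence on honest Picard groups (rather than on the relative Picard sheaves of Proposition \ref{Otherpaperprop}) and its compatibility with passage to the generic fibre. Here the algebraically closed residue field helps, forcing the relevant $\Pic$- and $\Br$-groups over $\Og_K$ to vanish; once this is in place, the remainder is bookkeeping with finite products of discrete valuation rings.
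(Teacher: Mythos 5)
Your proposal is correct and follows essentially the same route as the paper: the paper's proof likewise identifies $T$ with $(\mathcal{A}^\times\cap N_K)/N$ (there it appears as the kernel of the map $w\colon Q_1\to Q_2$ between the cokernels $\mathcal{A}^\times/\mathcal{B}^\times$ modulo global units, obtained from the exact sequence of Proposition \ref{Otherpaperprop} rather than from Milnor patching for the conductor square), and it draws the conclusion from the same finiteness input, namely that the unit groups of the total fraction rings modulo the units of the relevant products of discrete valuation rings are free of finite rank on uniformisers. The only difference is bookkeeping: the paper embeds $T$ into $\mathrm{coker}\, q_C$ by the snake lemma, whereas you exhibit explicit uniformiser representatives in $H\cap\mathcal{A}^\times$; both are valid.
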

\begin{proof}
Consider the commutative diagram
$$\begin{CD}
0@>>>\Gamma(\widetilde{\mathscr{C}}, \Og_{\widetilde{\mathscr{C}}})^\times /\Gamma(\mathscr{C}, \Og_{\mathscr{C}})^\times@>>> \mathcal{A}^\times/\mathcal{B}^\times @>>> Q_1 @>>> 0\\
&&@V{q_C}VV@V{q}VV@VV{w}V\\
0@>>>\Gamma(\widetilde{{C}}, \Og_{\widetilde{{C}}})^\times/\Gamma({C}, \Og_{{C}})^\times @>>> {A}^\times/{B}^\times @>>> Q_2 @>>> 0,
\end{CD}$$
where the $Q_j$ are the obvious cokernels. Because $\Gamma(\mathscr{C}, \Og_{\mathscr{C}}),$ $\Gamma(\widetilde{\mathscr{C}}, \Og_{\widetilde{\mathscr{C}}}),$ $\mathcal{A},$ and $\mathcal{B}$ are finite products of discrete valuation rings, and the maps $\Gamma(\mathscr{C}, \Og_{\mathscr{C}}) \to  \Gamma(\widetilde{\mathscr{C}}, \Og_{\widetilde{\mathscr{C}}})$ and $\mathcal{B} \to \mathcal{A}$ are finite and faithfully flat, it is easy to see that $q$ is injective and that $\mathrm{coker}\, q_C$ is finitely generated. Using the exact sequence from Proposition \ref{Otherpaperprop}, we see that $T=\ker w.$ Hence the snake lemma gives an injection $T\to \mathrm{coker} \, q_C.$
\end{proof} \\
Let $\mathscr{H}_{\mathscr{C}}$ and $\mathscr{H}_{\widetilde{\mathscr{C}}}$ be the scheme-theoretic closures of the kernels of $(P_{C/K}, Y_K) \to P_{C/K}$ and $(P_{\widetilde{C}/K}, Y_K) \to P_{\widetilde{C}/K}$ in $(P_{\mathscr{C}/\Og_K}, Y)$ and $(P_{\widetilde{\mathscr{C}}/\Og_K}, Y),$ respectively. We have canonical morphisms $$V_Y^\times/\Res_{\Gamma(\mathscr{C}, \Og_{\mathscr{C}})/\Og_K} \Gm \to \mathscr{H}_{\mathscr{C}}^1$$ and $$V_Y^\times/\Res_{\Gamma(\widetilde{\mathscr{C}}, \Og_{\widetilde{\mathscr{C}}})/\Og_K}\Gm \to \mathscr{H}_{\widetilde{\mathscr{C}}}^1,$$ where
$$\mathscr{H}^1_{\mathscr{C}}:=\mathscr{H}_{\mathscr{C}}\cap (P_{\mathscr{C}/\Og_K},Y)^0,$$ and similarly for $\widetilde{\mathscr{C}}.$ Because $V_Y^\times$ is smooth, these morphisms factor through the smoothenings $\widetilde{\mathscr{H}}_{\mathscr{C}}$ and $\widetilde{\mathscr{H}}_{\widetilde{\mathscr{C}}}$ of $\mathscr{H}_{\mathscr{C}}^1$ and $\mathscr{H}_{\widetilde{\mathscr{C}}}^1,$ respectively \cite[ Chapter 7.1, Theorem 5]{BLR}. Then we have
\begin{lemma}
The maps $V_Y^\times/\Res_{\Gamma(\mathscr{C}, \Og_{\mathscr{C}})/\Og_K}\Gm \to \widetilde{\mathscr{H}}_{\mathscr{C}}$ and $V_Y^\times/\Res_{\Gamma(\widetilde{\mathscr{C}}, \Og_{\widetilde{\mathscr{C}}})/\Og_K}\Gm\to \widetilde{\mathscr{H}}_{\widetilde{\mathscr{C}}}$ are open immersions. \label{openimmlem}
\end{lemma}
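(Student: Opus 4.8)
The plan is to show that each of these maps is a flat monomorphism of finite presentation, hence an open immersion, and to reduce the only non-formal point to the injectivity of an induced homomorphism of Lie algebras over the residue field.

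\emph{Set-up.} Write $\Gamma:=\Gamma(\mathscr{C},\Og_{\mathscr{C}})$. Since $Y$ is a rigidificator, $\Gamma\to\Gamma(Y,\Og_Y)$ is injective, so $\Res_{\Gamma/\Og_K}\Gm$ is a closed subgroup scheme of $V_Y^\times$; both are smooth over $\Og_K$, whence $E:=V_Y^\times/\Res_{\Gamma/\Og_K}\Gm$ is a smooth separated $\Og_K$-group scheme, with connected fibres (those of $V_Y^\times$ being connected). As noted just before the statement, the natural morphism $E\to\mathscr{H}^1_{\mathscr{C}}$ factors through the smoothening to give $\varphi\colon E\to\widetilde{\mathscr{H}}_{\mathscr{C}}$, and similarly one obtains $\widetilde{\varphi}\colon\widetilde{E}\to\widetilde{\mathscr{H}}_{\widetilde{\mathscr{C}}}$ with $\widetilde{E}:=V_Y^\times/\Res_{\Gamma(\widetilde{\mathscr{C}},\Og_{\widetilde{\mathscr{C}}})/\Og_K}\Gm$. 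On the generic fibre, the exact sequence (\ref{CanonicalExactSequence}) identifies $E_K$ with the kernel of $(P_{C/K},Y_K)\to P_{C/K}$, which is a connected smooth $K$-group, hence with $\mathscr{H}^1_{\mathscr{C},K}$; as the smoothening is an isomorphism on the generic fibre, $\varphi_K$ is an isomorphism, and in particular $E$ and $\widetilde{\mathscr{H}}_{\mathscr{C}}$ have the same relative dimension over $\Og_K$ (and likewise $\widetilde{E}$ and $\widetilde{\mathscr{H}}_{\widetilde{\mathscr{C}}}$).

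\emph{Reduction.} Let $\varphi$ be any homomorphism of smooth $\Og_K$-group schemes of equal relative dimension which is an isomorphism on the generic fibre. By translation-invariance of the sheaf of relative differentials and Nakayama's lemma, $\varphi$ is unramified if and only if $\Lie\varphi$ is an isomorphism of free $\Og_K$-modules, and this holds if and only if the induced map $\Lie\varphi_k$ on special fibres is injective. Suppose it does. Then $\varphi_k$ is unramified, so $\ker\varphi_k$ is a finite $k$-group and $\varphi$ is quasi-finite, so all fibres of $\varphi$ are zero-dimensional; by the miracle-flatness criterion ($\widetilde{\mathscr{H}}_{\mathscr{C}}$ regular, $E$ Cohen--Macaulay, equidimensional fibres) $\varphi$ is flat. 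Hence $\ker\varphi$ is $\Og_K$-flat with trivial generic fibre; its coordinate ring is then a torsion-free $\Og_K$-algebra contained in $K$ and admitting an $\Og_K$-algebra retraction onto $\Og_K$ (the counit), which forces it to equal $\Og_K$. Thus $\varphi$ is a monomorphism, and a flat monomorphism of finite presentation is an open immersion. The lemma is therefore reduced to the injectivity of $\Lie\varphi_k$ and of $\Lie\widetilde{\varphi}_k$.

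\emph{The crux.} This last point is the only real difficulty, and it is where wild ramification enters: a priori $\ker\varphi$ could be a nontrivial infinitesimal group scheme supported on the special fibre, and excluding this amounts to proving that $\ker\bigl(\Lie V_{Y,k}^\times\to\Lie\widetilde{\mathscr{H}}_{\mathscr{C},k}\bigr)$ is no larger than $\Lie\bigl(\Res_{\Gamma/\Og_K}\Gm\bigr)_k$ --- that is, that passing to the N\'eron smoothening of $\mathscr{H}^1_{\mathscr{C}}$ does not enlarge, on Lie algebras over $k$, the kernel already forced on the generic fibre. For the regular curve $\widetilde{\mathscr{C}}$ this is Raynaud's computation: by Lemma \ref{RigPicReslem} one may replace $\Og_K$ by $\Gamma(\widetilde{\mathscr{C}},\Og_{\widetilde{\mathscr{C}}})$, over which $\widetilde{\mathscr{C}}$ is a regular proper curve with geometrically connected fibres and trivial $H^0$, so that $(P_{\widetilde{\mathscr{C}}/\Og_K},Y)$, $\widetilde{E}$ and $\widetilde{\mathscr{H}}_{\widetilde{\mathscr{C}}}$ arise by Weil restriction from the objects of \cite{LLR}, Theorem 3.1 and Proposition 3.2; since $\Res_{S'/S}$ preserves open immersions and is compatible with smoothening, $\widetilde{\varphi}$ is an open immersion. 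For general $\mathscr{C}$, the plan is to run the same dilatation-by-dilatation analysis of the smoothening inside the ambient \emph{smooth} group algebraic space $(P_{\mathscr{C}/\Og_K},Y)^0$, and to use the exact sequences of Proposition \ref{Otherpaperprop} and Lemma \ref{Rigexseqlem} to compare $\widetilde{\mathscr{H}}_{\mathscr{C}}$ with $\widetilde{\mathscr{H}}_{\widetilde{\mathscr{C}}}$ modulo the smooth group $\Res_{\mathcal{A}/\Og_K}\Gm/\Res_{\mathcal{B}/\Og_K}\Gm$, so that the injectivity of $\Lie\varphi_k$ would follow from that of $\Lie\widetilde{\varphi}_k$ by the snake lemma. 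The main obstacle is to make this comparison at the level of the smoothenings, and not merely of $\mathscr{H}^1_{\mathscr{C}}$ and $\mathscr{H}^1_{\widetilde{\mathscr{C}}}$, because smoothening is not an exact functor; controlling it is precisely where the hypotheses built into the notion of a \emph{strong} model pair --- above all the co-Cartesian square --- will be used.
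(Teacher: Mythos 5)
Your first two steps (the generic-fibre identification and the formal reduction ``open immersion $\Leftrightarrow$ $\Lie\varphi_k$ injective'' via unramifiedness, miracle flatness, and triviality of a flat kernel with trivial generic fibre) are essentially sound, but they only restate the problem: since an open immersion of smooth $\Og_K$-groups is \'etale, injectivity of $\Lie\varphi_k$ is equivalent to the assertion of the Lemma, and all of the content lies in your ``crux''. There you do not give a proof: you propose to compare $\widetilde{\mathscr{H}}_{\mathscr{C}}$ with $\widetilde{\mathscr{H}}_{\widetilde{\mathscr{C}}}$ modulo $\Res_{\mathcal{A}/\Og_K}\Gm/\Res_{\mathcal{B}/\Og_K}\Gm$ and conclude by the snake lemma, and you yourself flag the obstruction --- the schematic closures $\mathscr{H}_{\mathscr{C}}$, $\mathscr{H}_{\widetilde{\mathscr{C}}}$ and their smoothenings do not sit in an exact sequence with the Weil-restriction quotient, and smoothening (dilatation) is not exact --- without resolving it. So the case of $\mathscr{C}$ (the only new case; for $\widetilde{\mathscr{C}}$ your reduction to \cite{LLR}, Lemma 3.5(d) via Lemma \ref{RigPicReslem} agrees with the paper) is left open, and this is a genuine gap, not a detail.

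The paper avoids the Lie-algebra/smoothening comparison altogether and argues on $\Og_K$-points. It shows that the cokernel of $(V_Y^\times/\Res_{\Gamma(\mathscr{C},\Og_{\mathscr{C}})/\Og_K}\Gm)(\Og_K)\to \mathscr{H}^1_{\mathscr{C}}(\Og_K)$ is a finitely generated abelian group: the corresponding cokernel $Q$ for $\widetilde{\mathscr{C}}$ is finite by \cite{LLR}, one passes to the kernel $J$ of $\mathscr{H}^1_{\mathscr{C}}(\Og_K)\to Q$, and an element of $J$ is a rigidified line bundle whose underlying bundle lies in the group $T$ of line bundles on $\mathscr{C}$ trivial on $C$ and under $\psi^\ast$, which is finitely generated by Lemma \ref{Tfingenlem} (this is where the strong-model-pair structure and the exact sequence of Proposition \ref{Otherpaperprop} actually enter). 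Granted this finiteness, the conclusion follows as in the proof of \cite{LLR}, Lemma 3.5(d), using that a smooth connected algebraic group over the algebraically closed field $k$ with finitely generated group of $k$-points is trivial, together with the dimension count and Zariski's main theorem. None of this point-counting input appears in your proposal, and without some substitute for it your snake-lemma plan does not go through; if you want to salvage your approach, the finiteness of $T$ is the ingredient you would need to import.
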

\begin{proof}
The map $V_Y^\times/\Res_{\Gamma(\widetilde{\mathscr{C}}, \Og_{\widetilde{\mathscr{C}}})/\Og_K}\Gm \to \widetilde{\mathscr{H}}_{\widetilde{\mathscr{C}}}$ is an open immersion; this follows from \cite[Lemma 3.5(d)]{LLR} together with Lemma \ref{RigPicReslem}.\\
We shall first show the auxiliary claim that the cokernel of the map 
$$(V_Y^\times/\Res_{\Gamma({\mathscr{C}}, \Og_{{\mathscr{C}}})/\Og_K}\Gm)(\Og_K) \to \widetilde{\mathscr{H}}_{\mathscr{C}}(\Og_K)=\mathscr{H}^1_{\mathscr{C}}(\Og_K)$$ is finitely generated. The cokernel $Q$ of $(V_Y^\times/\Res_{\Gamma(\widetilde{\mathscr{C}}, \Og_{\widetilde{\mathscr{C}}})/\Og_K}\Gm )(\Og_K)\to \widetilde{\mathscr{H}}_{\widetilde{\mathscr{C}}}(\Og_K)$ is a finite Abelian group. Now consider the map
$\mathscr{H}^1_{\mathscr{C}}(\Og_K) \to Q$ induced by the composition 
$$\mathscr{H}^1_{\mathscr{C}}(\Og_K) \to \mathscr{H}^1_{\widetilde{\mathscr{C}}}(\Og_K) = \widetilde{\mathscr{H}}_{\widetilde{\mathscr{C}}}(\Og_K) \to Q$$ and $J$ be its kernel. It suffices to show that the cokernel of the induced map
$$(V_Y^\times/\Res_{\Gamma({\mathscr{C}}, \Og_{{\mathscr{C}}})/\Og_K}\Gm)(\Og_K) \to J$$ is finitely generated. An element of $J$ is a rigidified line bundle $(\mathscr{L}, \alpha)$ on $\mathscr{C}$ such that $\mathscr{L} \in T$ (Lemma \ref{Tfingenlem}). An element of the kernel of the induced map $J\to T$ clearly comes from an element of $(V_Y^\times/\Res_{\Gamma({\mathscr{C}}, \Og_{{\mathscr{C}}})/\Og_K}\Gm)(\Og_K),$ so the auxiliary claim follows.
The remainder of the argument now works as in the proof of \cite[Lemma 3.5(d)]{LLR}: what we have just shown implies that the cokernel of the induced map
$$(V_Y^\times/\Res_{\Gamma({\mathscr{C}}, \Og_{{\mathscr{C}}})/\Og_K}\Gm) \times_{\Og_K}\Spec k \to \widetilde{\mathscr{H}}_{\mathscr{C}}\times_{\Og_K} \Spec k$$
has a finitely generated group of $k$-points. But since $k$ is algebraically closed, this cokernel must be finite over $k,$ so the morphism from the Lemma is quasi-finite. Since it is an isomorphism generically and the target is regular, our claim follows from Zariski's main theorem. 
\end{proof} \\
Exactly as in \cite[Proposition 3.3]{LLR} (using the argument from \cite[p. 36]{RaynaudPic})\footnote{There is a small typographical error in \cite{LLR}; we should put $\mathscr{A}_0=\overline{\mathscr{A}}=\Og_S,$ and  $\mathscr{A}'=\Og_{S_{\epsilon}}$ in the notation of \cite{RaynaudPic} used in \it loc. cit.\rm}, we see that there is a natural exact sequence
\begin{align}0 \to \Gamma({\mathscr{C}}, \Og_{{\mathscr{C}}}) \to \Gamma(Y, \Og_Y) \to \Lie\, (P_{{\mathscr{C}}/\Og_K},Y) \to \Lie P_{{\mathscr{C}}/\Og_K} \to 0.\label{naturalseqI}\end{align}
\begin{lemma}
There is a natural diagram with exact rows
$$\begin{CD}
0@>>> \Lie \widetilde{\mathscr{H}}_{\mathscr{C}} @>>> \Lie\, (P_{\mathscr{C}/\Og_K}, Y) @>>> \Lie P_{\mathscr{C}/\Og_K} @>>> 0\\
&&@V{\omega}VV@VV{=}V@VV{\lambda}V\\
0@>>> \Lie \mathscr{H}_{\mathscr{C}} @>>>\Lie\, (P_{\mathscr{C}/\Og_K}, Y) @>>{\pi}> \Lie P_{\mathscr{C}/\Og_K}^{\mathrm{sep}}.
\end{CD}$$
\end{lemma}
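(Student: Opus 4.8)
The plan is to exhibit both rows as $\Lie$ applied to short exact sequences of group algebraic spaces over $\Og_K$ that are already at our disposal, and then to check that the three vertical arrows are the functorially induced ones.

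For the top row I would start from the exact sequence
$$0 \to \Gamma(\mathscr{C}, \Og_{\mathscr{C}}) \to \Gamma(Y, \Og_Y) \to \Lie\,(P_{\mathscr{C}/\Og_K}, Y) \to \Lie P_{\mathscr{C}/\Og_K} \to 0$$
recorded just above. In particular $\Lie\,(P_{\mathscr{C}/\Og_K}, Y) \to \Lie P_{\mathscr{C}/\Og_K}$ is surjective with kernel $\Gamma(Y, \Og_Y)/\Gamma(\mathscr{C}, \Og_{\mathscr{C}})$. This kernel is $\Lie\widetilde{\mathscr{H}}_{\mathscr{C}}$: the open immersion $V_Y^\times/\Res_{\Gamma(\mathscr{C}, \Og_{\mathscr{C}})/\Og_K}\Gm \to \widetilde{\mathscr{H}}_{\mathscr{C}}$ of the preceding Lemma induces an isomorphism on Lie algebras, and $\Lie\bigl(V_Y^\times/\Res_{\Gamma(\mathscr{C}, \Og_{\mathscr{C}})/\Og_K}\Gm\bigr) = \Lie V_Y^\times/\Lie\Res_{\Gamma(\mathscr{C}, \Og_{\mathscr{C}})/\Og_K}\Gm = \Gamma(Y, \Og_Y)/\Gamma(\mathscr{C}, \Og_{\mathscr{C}})$; moreover the composite $V_Y^\times/\Res_{\Gamma(\mathscr{C}, \Og_{\mathscr{C}})/\Og_K}\Gm \to \widetilde{\mathscr{H}}_{\mathscr{C}} \to (P_{\mathscr{C}/\Og_K}, Y)$ is induced by the map $V_Y^\times \to (P_{\mathscr{C}/\Og_K}, Y)$ of (\ref{CanonicalExactSequence}), so this identification is compatible with the maps into $\Lie\,(P_{\mathscr{C}/\Og_K}, Y)$. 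This gives the exact top row.

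The main work is the bottom row, for which the key claim is $\mathscr{H}_{\mathscr{C}} = \ker\bigl((P_{\mathscr{C}/\Og_K}, Y) \to P_{\mathscr{C}/\Og_K}^{\mathrm{sep}}\bigr)$. Writing $\overline{\{e\}}$ for the functorial schematic closure of the unit section of $P_{\mathscr{C}/\Og_K}$, so that $P_{\mathscr{C}/\Og_K}^{\mathrm{sep}} = P_{\mathscr{C}/\Og_K}/\overline{\{e\}}$, this kernel is $(P_{\mathscr{C}/\Og_K}, Y) \times_{P_{\mathscr{C}/\Og_K}} \overline{\{e\}}$. Since schematic closures over a discrete valuation ring are $\Og_K$-flat and $(P_{\mathscr{C}/\Og_K}, Y) \to P_{\mathscr{C}/\Og_K}$ is faithfully flat (it is a torsor under $V_Y^\times/V_{\mathscr{C}}^\times$ by (\ref{CanonicalExactSequence})), the fibre product is a closed $\Og_K$-flat subscheme of $(P_{\mathscr{C}/\Og_K}, Y)$. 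Its generic fibre is $(P_{C/K}, Y_K) \times_{P_{C/K}} \{e\} = \ker\bigl((P_{C/K}, Y_K) \to P_{C/K}\bigr)$, because $\overline{\{e\}}$ has generic fibre $\{e\}$ — the group scheme $P_{C/K}$ of finite type over the field $K$ being separated, with closed unit section. An $\Og_K$-flat closed subscheme of $(P_{\mathscr{C}/\Og_K}, Y)$ with this generic fibre is precisely the scheme-theoretic closure $\mathscr{H}_{\mathscr{C}}$, which proves the claim. Applying the left-exact functor $\Lie$ then gives the bottom row $0 \to \Lie\mathscr{H}_{\mathscr{C}} \to \Lie\,(P_{\mathscr{C}/\Og_K}, Y) \xrightarrow{\pi} \Lie P_{\mathscr{C}/\Og_K}^{\mathrm{sep}}$.

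It remains to identify the vertical maps and check commutativity. The central map is the identity. The maps $\lambda$ and $\pi$ are $\Lie$ applied to $P_{\mathscr{C}/\Og_K} \to P_{\mathscr{C}/\Og_K}^{\mathrm{sep}}$ and to $(P_{\mathscr{C}/\Og_K}, Y) \to P_{\mathscr{C}/\Og_K}^{\mathrm{sep}}$ respectively, and the latter morphism factors through the former, so the right-hand square commutes. The map $\omega$ is $\Lie$ of the composite $\widetilde{\mathscr{H}}_{\mathscr{C}} \to \mathscr{H}^1_{\mathscr{C}} \hookrightarrow \mathscr{H}_{\mathscr{C}}$ (the smoothening morphism followed by the open immersion $\mathscr{H}^1_{\mathscr{C}} = \mathscr{H}_{\mathscr{C}} \cap (P_{\mathscr{C}/\Og_K}, Y)^0 \hookrightarrow \mathscr{H}_{\mathscr{C}}$), and the left-hand square commutes because in it both composites $\Lie\widetilde{\mathscr{H}}_{\mathscr{C}} \to \Lie\,(P_{\mathscr{C}/\Og_K}, Y)$ are induced by the structural morphism $\widetilde{\mathscr{H}}_{\mathscr{C}} \to (P_{\mathscr{C}/\Og_K}, Y)$. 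I expect the one genuinely delicate point to be the identification $\mathscr{H}_{\mathscr{C}} = \ker\bigl((P_{\mathscr{C}/\Og_K}, Y) \to P_{\mathscr{C}/\Og_K}^{\mathrm{sep}}\bigr)$: the subtlety is that one must pass to the separated quotient — whose kernel is $\Og_K$-flat, hence a scheme-theoretic closure — rather than work with the a priori non-flat kernel of $(P_{\mathscr{C}/\Og_K}, Y) \to P_{\mathscr{C}/\Og_K}$ itself, while verifying that this enlargement does not change the generic fibre. Everything else reduces to formal manipulation with the exact sequences and the two preceding Lemmata.
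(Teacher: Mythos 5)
Your route is in substance the one the paper intends: its own proof is a pointer to \cite{LLR}, Lemma 3.5(e), and what you write out (top row from the four-term sequence $0\to\Gamma(\mathscr{C},\Og_{\mathscr{C}})\to\Gamma(Y,\Og_Y)\to\Lie\,(P_{\mathscr{C}/\Og_K},Y)\to\Lie P_{\mathscr{C}/\Og_K}\to 0$ plus the open-immersion Lemma, bottom row by identifying $\mathscr{H}_{\mathscr{C}}$ with the kernel of $(P_{\mathscr{C}/\Og_K},Y)\to P^{\mathrm{sep}}_{\mathscr{C}/\Og_K}$, then formal commutativity) is a reconstruction of that argument. The top row and both squares are fine.

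There is, however, a gap in your justification of the key identification: you treat $P_{\mathscr{C}/\Og_K}$, the closure $\overline{\{e\}}$ of the unit section, and their fibre product with $(P_{\mathscr{C}/\Og_K},Y)$ as schemes, invoking flatness of schematic closures over a DVR and closedness/flatness of the fibre product. But cohomological flatness of $\mathscr{C}\to\Spec\Og_K$ is not assumed (the interesting cases of Theorem \ref{samelengththm}, e.g.\ Paragraph \ref{examplepar}, are exactly the non-cohomologically flat ones), so $\Pic_{\mathscr{C}/\Og_K}$, hence $P_{\mathscr{C}/\Og_K}$ and $\overline{\{e\}}$, need not be representable even by algebraic spaces; this is precisely why the paper uses the rigidified functor and Raynaud's \emph{functorial} schematic closure and formal separated quotient. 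As written, neither ``$\overline{\{e\}}$ is $\Og_K$-flat'' nor ``the fibre product is a closed $\Og_K$-flat subscheme'' is available, and without that flatness your final step (a flat closed subscheme with the given generic fibre equals the closure) only yields the easy inclusion $\Lie\mathscr{H}_{\mathscr{C}}\subseteq\ker\pi$, not the inclusion $\ker\pi\subseteq\Lie\mathscr{H}_{\mathscr{C}}$ that the lemma actually needs. The repair is to argue functorially and only at the level of $\Lie$, which suffices because $\Og_K[\epsilon]$ is flat over $\Og_K$: by the functorial definition of the closure (\cite{RaynaudPic}, p.\ 40), a section of $P_{\mathscr{C}/\Og_K}$ over a flat test scheme lies in $\overline{\{e\}}$ exactly when it is generically the unit; hence an element $v\in\Lie\,(P_{\mathscr{C}/\Og_K},Y)$ with $\pi(v)=0$ is a morphism $\Spec \Og_K[\epsilon]\to (P_{\mathscr{C}/\Og_K},Y)$ from a flat $\Og_K$-scheme whose generic fibre lands in $\ker((P_{C/K},Y_K)\to P_{C/K})$, and such a morphism factors through the scheme-theoretic closure $\mathscr{H}_{\mathscr{C}}$ (its pullback is a closed subscheme of $\Spec\Og_K[\epsilon]$ whose ideal vanishes generically, hence vanishes by torsion-freeness). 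The reverse inclusion holds since $\mathscr{H}_{\mathscr{C}}$ is $\Og_K$-flat with generically trivial image in $P^{\mathrm{sep}}_{\mathscr{C}/\Og_K}$, which is formally separated. With this substitution your argument is complete and agrees with the cited one.
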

\begin{proof}
The morphism denoted by $\lambda$ above coincides with the morphism $\lambda\colon H^1(\mathscr{C}, \Og_{\mathscr{C}})\to \Lie P_{\mathscr{C}/\Og_K}^{\mathrm{sep}}$ modulo the identification $\theta_{\mathscr{C}}\colon \Lie P_{\mathscr{C}/\Og_K}\to H^1(\mathscr{C}, \Og_{\mathscr{C}})$ from \cite[Proposition 1.3(b)]{LLR}. The Lemma can now be proven exactly as in \cite[Lemma 3.5(e)]{LLR}: Lemma \ref{openimmlem} shows that the map 
$$\Gamma(Y, \Og_Y)/\Gamma(\mathscr{C}, \Og_{\mathscr{C}}) \to \Lie \widetilde{\mathscr{H}}_{\mathscr{C}}$$ is an isomorphism; the top row of the diagram is induced by the exact sequence (\ref{naturalseqI}) above. Moreover, we note that the canonical morphism $(P_{\mathscr{C}/\Og_K}, Y)/\mathscr{H}_{\mathscr{C}} \to P_{\mathscr{C}/\Og_K}^{\mathrm{sep}}$ is an isomorphism (cf. \cite[Chapter 9.5, Proposition 3, \it 3rd case\rm]{BLR}), which induces the bottom row.
\end{proof}\\
We can now give the \it proof of Theorem \ref{samelengththm}, \rm which follows the argument from \cite[p. 480]{LLR}: Because $\Og_K$ is strictly Henselian, any element of $P_{\mathscr{C}/\Og_K}(\Og_K)$ is given by a line bundle $\mathscr{L}$ on $\mathscr{C}.$ Because $Y$ is semilocal, $\mathscr{L}\!\mid_Y$ is trivial, so $(P_{\mathscr{C}/\Og_K},Y)(\Og_K) \to P_{\mathscr{C}/\Og_K}(\Og_K)$ is surjective.  Moreover, the map $P_{\mathscr{C}/\Og_K}(\Og_K)\to P_{\mathscr{C}/\Og_K}^{\mathrm{sep}}(\Og_K)$ is easily seen to be surjective since $\mathscr{C}$ is semi-factorial. By \cite[Chapter 9.6, Lemma 2]{BLR}, the induced map $$(P_{\mathscr{C}/\Og_K},Y)^0 (\Og_K) \to P_{\mathscr{C}/\Og_K}^{\mathrm{sep},0}(\Og_K)$$ is surjective as well. In particular, using \cite[Theorem 2.1(a)]{LLR}, we see that 
\begin{align*}
\ell_{\Og_K}(\ker \lambda) = \ell_{\Og_K}(\mathrm{coker} \, \omega) = \ell_{\Og_K} (\mathrm{coker} \, \pi) = \ell_{\Og_K}(\mathrm{coker} \, \lambda),
\end{align*}
as desired. \hfill \qed
\subsection{Relationship with the Néron model}
We keep the notation from above. In particular, $C$ is a proper reduced curve over $K.$ We have the following
\begin{proposition}
Let $G$ be a smooth connected group scheme over $K$ which admits a Néron lft-model $\mathscr{N}.$ Let $\mathscr{G}\to \Spec \Og_K$ be a smooth separated model of $G$ such that the map $\mathscr{G}(\Og_K) \to G(K)$ is an isomorphism and such that $\Phi_{\mathscr{G}}:=\mathscr{G}_k(k)/\mathscr{G}^0_k(k)$ is finitely generated as an Abelian group. Then the map $\mathscr{G}\to \mathscr{N}$ is an isomorphism. \label{fingennérprop}
\end{proposition}
\begin{proof}
By definition, there is a unique morphism $\mathscr{G}\to \mathscr{N}$ which extends the identity at the generic fibre. We shall first show that the induced map on identity components
$\mathscr{G}^0 \to \mathscr{N}^0$ is an isomorphism; this part of the argument is essentially taken from \cite[p. 269]{BLR}.
By assumption, the map $\mathscr{G}(\Og_K) \to \mathscr{N}(\Og_K)$ is an isomorphism; in particular, the map $\mathscr{G}(k)\to \mathscr{N}(k)$ is surjective. The cokernel of the induced morphism $$\nu \colon \mathscr{G}_k^0 \to \mathscr{N}^0 _k$$ is a smooth connected algebraic group over $k.$ The snake lemma shows that $(\mathrm{coker}\,\nu)(k)$ is a finitely generated Abelian group. But because $k$ is algebraically closed, $(\mathrm{coker}\,\nu)(k)$ is $n$-divisible for any natural number $n$ invertible in $k.$ This is only possible if $\mathrm{coker}\,\nu=0,$ so the map $\mathscr{G}^0_k \to \mathscr{N}^0_k$ is surjective. Because the fibres of $\mathscr{G}^0$ and $\mathscr{N}^0$ are of the same dimension, the map $\mathscr{G}^0\to \mathscr{N}^0$ must be quasi-finite. Since it is also an isomorphism generically, Zariski's main theorem shows that it is an isomorphism. \\
Now let $j\colon \Spec k \to \Spec \Og_K$ be the canonical closed immersion and denote by $\Phi_{\mathscr{N}}$ the Abelian group $\mathscr{N}(\Og_K)/\mathscr{N}^0(\Og_K)=\mathscr{N}_k(k)/\mathscr{N}^0_k(k).$ We can view $\Phi_{\mathscr{N}}$ and $\Phi_{\mathscr{G}}$ as group schemes over $k,$ and we have a diagram with exact rows
$$\begin{CD}
0@>>> \mathscr{G}^0@>>>\mathscr{G}@>>> j_\ast \Phi_{\mathscr{G}}@>>>0\\
&&@VVV@VVV@VVV\\
0@>>> \mathscr{N}^0 @>>> \mathscr{N} @>>>j_\ast \Phi_{\mathscr{N}}@>>>0
\end{CD}$$
(the rows are even exact in the Zariski topology). We already know that the left vertical arrow is an isomorphism. This implies that the right vertical arrow is also an isomorphism since $\mathscr{G}(\Og_K)\to \mathscr{N}(\Og_K)$ is an isomorphism by assumption, and we have just seen that it carries $\mathscr{G}^0(\Og_K)$ isomorphically into $\mathscr{N}^0(\Og_K).$ In particular, the central vertical arrow is an isomorphism as well, which implies our claim. 
\end{proof}
\begin{corollary} 
Let $C$ be a proper reduced curve over $K$ and let $\psi\colon \widetilde{\mathscr{C}} \to \mathscr{C}$ be a strong model pair of $C$ such that $\mathscr{C}$ is semi-factorial. \label{Nérisomcor} Moreover, assume that $\Pic^0_{C/K}$ admits a Néron lft-model over $\Og_K.$ Then $P_{\mathscr{C}/\Og_K}^{\mathrm{sep}}$ is the Néron lft-model of $\Pic^0_{C/K}.$ 
\end{corollary}
\begin{proof}
First note that $P_{\mathscr{C}/K}^{\mathrm{sep}}$ is representable by a smooth separated group scheme over $\Og_K;$ the proof works as in \cite[Chapter 9.5, proof of Proposition 3, \it 3rd case\rm]{BLR}. Moreover, by \cite[Chapter 9.2, Corollary 14]{BLR}, the Abelian group $$P_{\mathscr{C}/\Og_K}^{\mathrm{sep}}(k)/P_{\mathscr{C}/\Og_K}^{\mathrm{sep},0}(k)=P_{\mathscr{C}/\Og_K}^{\mathrm{sep}}(\Og_K)/P_{\mathscr{C}/\Og_K}^{\mathrm{sep},0}(\Og_K)$$ is finitely generated. Because $k$ is algebraically closed, $\Br \Gamma(C, \Og_C) =0$ \cite[Theorem 1.2.15]{CS}, so $\Pic_{C/K}(K)=\Pic C.$ Because $\mathscr{C}$ is semi-factorial, the map $\Pic_{\mathscr{C}/\Og_K}(\Og_K) \to \Pic_{C/K}(K)$ is surjective. This map induces a surjection $P_{\mathscr{C}/\Og_K}(\Og_K) \to \Pic^0_{C/K}(K)$ as $P_{\mathscr{C}/\Og_K}$ is open and closed in $\Pic_{\mathscr{C}/\Og_K}.$ Hence Proposition \ref{fingennérprop} implies the claim. 
\end{proof}
\begin{corollary}
Let $C$ be a proper reduced curve over $K$ such that $\Pic^0_{C/K}$ admits a Néron lft-model $\mathscr{N}$ over $\Og_K.$ Let $\psi\colon \widetilde{\mathscr{C}}\to \mathscr{C}$ be a strong model pair of $C$ and suppose that $\mathscr{C}$ is semi-factorial. Moreover, let $\widetilde{\mathscr{N}}$ be the Néron lft-model of $\Pic^0_{\widetilde{C}/K}.$ Then there is a commutative diagram \label{lengthscor}
$$\begin{CD}
H^1(\mathscr{C}, \Og_{\mathscr{C}}) @>{\psi^\ast}>> H^1(\widetilde{\mathscr{C}}, \Og_{\mathscr{\widetilde{\mathscr{C}}}})\\
@V{\lambda}VV@VV{\widetilde{\lambda}}V\\
\Lie \mathscr{N}@>>> \Lie \widetilde{\mathscr{N}}
\end{CD}$$
such that $\ell_{\Og_K}(\ker \lambda)=\ell_{\Og_K}(\mathrm{coker}\, \lambda)$ and $\ell_{\Og_K}(\ker \widetilde{\lambda})=\ell_{\Og_K}(\mathrm{coker}\, \widetilde{\lambda}).$
\end{corollary}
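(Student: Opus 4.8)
The plan is to deduce the corollary from Theorem~\ref{samelengththm} and Corollary~\ref{N�risomcor}, applied once to $\mathscr{C}$ and once to the regular model $\widetilde{\mathscr{C}}$. For $\mathscr{C}$ itself the hypotheses of Corollary~\ref{N�risomcor} are in force (it is semi-factorial and $\Pic^0_{C/K}$ admits a N\'eron lft-model), so $P^{\mathrm{sep}}_{\mathscr{C}/\Og_K}=\mathscr{N}$. Under this identification the map $\lambda$ attached by Theorem~\ref{samelengththm} to the strong model pair $\psi$ becomes a homomorphism $\widetilde{\lambda}\colon H^1(\mathscr{C},\Og_{\mathscr{C}})\to\Lie\mathscr{N}$, and Theorem~\ref{samelengththm} already supplies $\ell_{\Og_K}(\ker\widetilde{\lambda})=\ell_{\Og_K}(\mathrm{coker}\,\widetilde{\lambda})$.

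Next I would treat $\widetilde{\mathscr{C}}$. A regular scheme is semi-factorial: a line bundle on the generic fibre is represented by a Weil divisor, whose schematic closure is again a divisor since $\widetilde{\mathscr{C}}$ is locally factorial, and the associated line bundle extends it. Moreover the identity $\widetilde{\mathscr{C}}\to\widetilde{\mathscr{C}}$ is (trivially) a strong model pair of $\widetilde{C}$, so Theorem~\ref{samelengththm} applies and gives $\ell_{\Og_K}(\ker\lambda)=\ell_{\Og_K}(\mathrm{coker}\,\lambda)$ for $\lambda\colon H^1(\widetilde{\mathscr{C}},\Og_{\widetilde{\mathscr{C}}})\to\Lie P^{\mathrm{sep}}_{\widetilde{\mathscr{C}}/\Og_K}$; this instance is in fact Raynaud's original \cite{LLR}, Theorem~3.1. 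To rewrite the target as $\Lie\widetilde{\mathscr{N}}$ I apply Corollary~\ref{N�risomcor} to $\widetilde{C}$, which requires a N\'eron lft-model of $\Pic^0_{\widetilde{C}/K}$; this is part of the data of the statement, and in the setting of the main theorem it is automatic, since $\Pic^0_{\widetilde{C}/K}$ is a quotient of the semiabelian variety $\Pic^0_{C/K}$, hence again semiabelian (so \cite{BLR}, Chapter~10 provides it).

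It remains to build the commutative square. The morphism $\psi$ induces, by pullback of line bundles, a morphism $P_{\mathscr{C}/\Og_K}\to P_{\widetilde{\mathscr{C}}/\Og_K}$ with generic fibre $\psi^\ast\colon\Pic^0_{C/K}\to\Pic^0_{\widetilde{C}/K}$. By construction, $\lambda$ is the composite of the canonical identification $\theta_{(-)}\colon\Lie P_{(-)/\Og_K}\xrightarrow{\ \sim\ }H^1((-),\Og)$ of \cite{LLR}, Proposition~1.3(b) with the projection onto $\Lie P^{\mathrm{sep}}_{(-)/\Og_K}$; since $\theta$ is the restriction of the deformation-theoretic isomorphism $\Lie\Pic_{X/S}=H^1(X,\Og_X)$, it is natural in the relative curve, and the projection to the separated quotient is natural by the latter's universal property. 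Combining these two naturalities yields a commutative square with top arrow $\psi^\ast$ and bottom arrow $\Lie$ of the induced map $P^{\mathrm{sep}}_{\mathscr{C}/\Og_K}\to P^{\mathrm{sep}}_{\widetilde{\mathscr{C}}/\Og_K}$. Finally, under the identifications $P^{\mathrm{sep}}_{\mathscr{C}/\Og_K}=\mathscr{N}$ and $P^{\mathrm{sep}}_{\widetilde{\mathscr{C}}/\Og_K}=\widetilde{\mathscr{N}}$, this bottom arrow is the canonical morphism $\mathscr{N}\to\widetilde{\mathscr{N}}$: both are $\Og_K$-morphisms restricting to $\psi^\ast$ on generic fibres, and such an extension is unique by the N\'eron property of $\widetilde{\mathscr{N}}$ together with the smoothness of $\mathscr{N}$. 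The two required length equalities were obtained in the first two steps.

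The genuinely substantive inputs --- Theorem~\ref{samelengththm} and Corollary~\ref{N�risomcor} --- are already in hand, and what remains is essentially bookkeeping. The step I expect to require the most care is the naturality of $\theta_{(-)}$ with respect to pullback along $\psi$ (i.e.\ the compatibility of the identifications $\Lie P\cong H^1(\Og)$ with $\psi^\ast$), together with checking that $\widetilde{\mathscr{C}}$ genuinely satisfies all the hypotheses invoked (regularity implying semi-factoriality, and the existence of $\widetilde{\mathscr{N}}$); neither point is expected to present real difficulty.
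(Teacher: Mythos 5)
Your proposal is correct and follows essentially the same route as the paper: the $\mathscr{C}$-side length equality comes from Theorem \ref{samelengththm} combined with Corollary \ref{N�risomcor}, the $\widetilde{\mathscr{C}}$-side is exactly \cite{LLR}, Theorem 3.1 (which the paper cites directly rather than re-deriving it via the trivial strong model pair), and commutativity --- which the paper dismisses as clear --- is justified by the naturality of $\Lie P_{(-)/\Og_K}\cong H^1(-,\Og)$, of the separated quotient, and of the N\'eron mapping property, just as you spell out.
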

\begin{proof}
Commutativity of the diagram is clear. The claim on lengths follows from Theorem \ref{samelengththm} together with Corollary \ref{Nérisomcor} (for $\widetilde{\lambda},$ this also follows from \cite[Theorem 3.1]{LLR}).
\end{proof}\\
The requirement that $\Pic^0_{C/K}$ be semiabelian has the following consequence: 
\begin{proposition}
Let $C$ be a reduced proper curve over $K$ and assume that $\Pic^0_{C/K}$ is semiabelian. Then $C$ is seminormal. Moreover, the irreducible components of $\widetilde{C}$ are either smooth over $K$ or of arithmetic genus 0. In particular, $\Pic^0_{\widetilde{C}/K}$ is an Abelian variety. \label{seminormalpropII}
\end{proposition}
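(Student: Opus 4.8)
\emph{Overall strategy.} My plan is to pass to the algebraic closure $\bar{K},$ where the structure theory of Picard schemes of reduced curves and of group schemes is classical, carry out the argument there, and then descend. Since ``semiabelian'' is insensitive to base change, $\Pic^0_{C_{\bar{K}}/\bar{K}}=(\Pic^0_{C/K})\times_K\bar{K}$ is semiabelian, hence contains no nontrivial smooth connected unipotent subgroup; I will use this repeatedly.

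\emph{Seminormality.} First I would observe that if $C_{\bar{K}}$ were non-reduced then, writing $\mathcal{N}$ for the nilradical sheaf, the filtration of $1+\mathcal{N}$ by the $1+\mathcal{N}^i$ (with successive quotients the coherent sheaves $\mathcal{N}^i/\mathcal{N}^{i+1}$) exhibits a nontrivial smooth connected unipotent subgroup of $\Pic^0_{C_{\bar{K}}/\bar{K}},$ a contradiction; so $C$ is geometrically reduced. For the reduced curve $C_{\bar{K}},$ the affine part of $\Pic^0_{C_{\bar{K}}/\bar{K}}$ is the product over the singular points $c$ of the local contributions $\widetilde{\Og}_c^\times/\Og_c^\times,$ and such a local contribution is a torus precisely when $\delta_c$ equals (number of branches at $c$)$\,-\,1,$ i.e. precisely when $c$ is an ordinary multiple point; the conjunction of these conditions over all $c$ is exactly seminormality of $C_{\bar{K}}$ (a classical fact about curve singularities). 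Since the affine part of a semiabelian variety is a torus, $C_{\bar{K}}$ is seminormal. Finally seminormality descends: the seminormalisation $C^{\mathrm{sn}}\to C$ base-changes to a finite, birational, subintegral modification of the seminormal scheme $C_{\bar{K}},$ hence an isomorphism over $\bar{K},$ and faithfully flat descent gives $C^{\mathrm{sn}}=C.$

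\emph{The components of $\widetilde{C}.$} The pullback $\Pic^0_{C/K}\to\Pic^0_{\widetilde{C}/K}$ is surjective --- this can be checked over $\bar{K},$ where it follows because the normalisation map is finite, so $R^1\nu_\ast\Gm=0$ (Picard groups of semilocal rings vanishing) and $H^1$ of the skyscraper cokernel of $\Gm\to\nu_\ast\Gm$ vanishes. Hence $\Pic^0_{\widetilde{C}/K}$ is a quotient of a semiabelian variety, so it is semiabelian. Writing $\widetilde{C}=\coprod_i\widetilde{C}_i$ for the connected components (which, $\widetilde{C}$ being normal, are its irreducible components) and $K_i:=\Gamma(\widetilde{C}_i,\Og_{\widetilde{C}_i}),$ we get $\Pic^0_{\widetilde{C}/K}=\prod_i\Res_{K_i/K}\Pic^0_{\widetilde{C}_i/K_i},$ so each factor is semiabelian. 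Now I would use that, by the standing assumptions, $\Og_K$ is complete (hence excellent) with algebraically closed residue field, so $\delta(K)=1$ by Lemma \ref{plusonelem} and Corollary \ref{Deltacor}; Proposition \ref{Steinredprop} applied to $\widetilde{C}_i\to\Spec K$ (proper, cohomologically flat in dimension zero over a field, with normal fibre $\widetilde{C}_i,$ and $\delta(K)\le1$) shows $\widetilde{C}_i\to\Spec K_i$ has geometrically reduced fibres, i.e. $\widetilde{C}_i$ is geometrically reduced, hence (being regular) smooth, over $K_i.$ Consequently $\widetilde{C}_i$ is smooth over $K$ as soon as $K_i/K$ is separable --- the first alternative. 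If $K_i/K$ is not separable, let $K_i^{\mathrm{s}}$ be the separable closure of $K$ in $K_i$; then $\Res_{K_i/K_i^{\mathrm{s}}}\Pic^0_{\widetilde{C}_i/K_i}$ is semiabelian over $K_i^{\mathrm{s}},$ whereas Weil restriction along the nontrivial purely inseparable extension $K_i/K_i^{\mathrm{s}}$ of any nonzero smooth connected group scheme acquires, after base change to the perfect closure, a nontrivial smooth connected unipotent subgroup (the infinitesimal/jet directions); so $\Pic^0_{\widetilde{C}_i/K_i}=0,$ whence $\widetilde{C}_i$ has arithmetic genus $0$ as $\Pic_{\widetilde{C}_i/K_i}$ is smooth. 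This yields the dichotomy, and then $\Pic^0_{\widetilde{C}/K}$ is the product of the $\Res_{K_i/K}\Pic^0_{\widetilde{C}_i/K_i}$ with $K_i/K$ separable --- Weil restrictions of Abelian varieties along \'etale extensions --- hence an Abelian variety.

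\emph{Main obstacle.} I expect the delicate points to be (i) the classical identification, over $\bar{K},$ of ``toric local contribution at $c$'' with ``$c$ an ordinary multiple point'' together with the compatibility of the normalisation with the various constructions, and (ii) making precise the behaviour of Weil restriction along purely inseparable extensions; both can alternatively be extracted from the author's earlier work on seminormal curves and from standard structure theory of pseudo-reductive/pseudo-complete groups (e.g. \cite{CGP}), while the role of the hypothesis $\delta(K)=1$ via Proposition \ref{Steinredprop} is what makes the regularity-versus-smoothness step go through.
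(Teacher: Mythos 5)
Your first step is where the argument breaks: semiabelianness of $\Pic^0_{C/K}$ does \emph{not} force $C$ to be geometrically reduced. Take $C=\mathbf{P}^1_{K'}$ with $K'/K$ finite purely inseparable: then $\Pic^0_{C/K}=\Res_{K'/K}\Pic^0_{\mathbf{P}^1_{K'}/K'}=0$ is (trivially) semiabelian and $C$ is reduced, regular and seminormal, yet $C_{\overline{K}}$ is non-reduced. The flaw is that the unipotent groups coming from the filtration of $1+\mathcal{N}$ by the $1+\mathcal{N}^i$ are subquotients of the coherent cohomology groups $H^1(\mathcal{N}^i/\mathcal{N}^{i+1})$, which may vanish, so non-reducedness of $C_{\overline{K}}$ produces no nontrivial unipotent subgroup of $\Pic^0_{C_{\overline{K}}/\overline{K}}$. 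This is not a cosmetic issue for your strategy: exactly the components the proposition allows (arithmetic genus $0$) are typically geometrically non-reduced, so you cannot work with the reduced-curve theory over $\overline{K}$ for $C_{\overline{K}}$ itself, and the descent of seminormality you invoke presupposes $C_{\overline{K}}$ reduced. Rerouting through $(C_{\overline{K}})_{\mathrm{red}}$ is not a flat base change, and descending seminormality from it is a genuinely delicate point (seminormality interacts badly with inseparable extensions; compare condition (iii) of Theorem \ref{Raynaudtypethm}). The paper sidesteps $\overline{K}$ entirely: it factors the seminormalisation $C^{\mathrm{sn}}\to C$ into pushouts along $\Spec\kappa(x)[\epsilon]/\langle\epsilon^2\rangle\to\Spec\kappa(x)$ (\cite{Ov}, Theorem 2.24) and reads off, from the exact sequences of \cite{Ov}, Proposition 2.30, a nontrivial \emph{split unipotent} subquotient of $\Pic^0_{C/K}$ directly over $K$, contradicting semiabelianness.

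There is a second gap in the component analysis: ``geometrically reduced, hence (being regular) smooth over $K_i$'' is false as stated, even with $\delta(K_i)=1$. The regular projective model of $y^2=x^p-t$ over $\mathbf{F}_p(t)$ ($p$ odd) is geometrically integral and regular but not smooth (geometrically it acquires a cusp). To rule this out you must actually use the semiabelian hypothesis, which is what the paper does: since $\widetilde{C}_i$ is normal, purely inseparable base change keeps it unibranch, so (after a separable extension) $\Pic^0_{\widetilde{C}_i/K_i}$ contains no torus; being semiabelian it is therefore an abelian variety, and properness of $\Pic^0$ is what excludes geometric singularities and yields smoothness in the separable case, while \cite{CGP}, Example A.5.6 handles the inseparable case exactly as in your sketch. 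Your surjectivity of $\Pic^0_{C/K}\to\Pic^0_{\widetilde{C}/K}$ and the product decomposition over the components are fine and agree with the paper's use of \cite{Ov}, Proposition 2.30, but as written both the seminormality step and the smoothness step rest on false implications and so the proof does not go through.
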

\begin{proof}
We may suppose without loss of generality that $C$ is connected. Assume that $\Pic^0_{C/K}$ is semiabelian. Let $C^{\mathrm{sn}}$ be the seminormalisation of $C,$ such that we have a factorisation $\widetilde{C} \overset{\nu}\to C^{\mathrm{sn}} \overset{\varsigma}{\to} C.$ If $\varsigma$ is not an isomorphism, \cite[Theorem 2.24(i)]{Ov} shows that we have a factorisation 
$C^{\mathrm{sn}}=C_1 \to ... \to C_n=C$ such that for each $i=1,..., n-1,$ we have a closed point $x_{i+1}$ of $C_{i+1}$ and a closed immersion $\kappa(x_{i+1})[\epsilon]/\langle \epsilon^2 \rangle \to C_i$ such that the diagram

$$\begin{CD}
C_{i}@>>> C_{i+1}\\
@AAA@AAA \\
\Spec \kappa(x_{i+1})[\epsilon]/\langle \epsilon ^2 \rangle @>>> \Spec \kappa(x_{i+1})
\end{CD}$$
is co-Cartesian.
Because $C$ is connected, so are $C_i,$ and $C_{i+1},$ and the $K$-algebras $\Gamma(C_i, \Og_{C_i})$ as well as $\Gamma(C_{i+1}, \Og_{C_{i+1}})$ are fields contained in $\kappa(x_{i+1}).$ Consider the exact sequence
\begin{align*}0 &\to \Res_{\Gamma(C_{i+1}, \Og_{C_{i+1}})/K} \Gm \to \Res_{\Gamma(C_{i}, \Og_{C_{i}})/K} \Gm  \to \Res_{\kappa(x_{i+1})/K} \Ga \\& \to \Pic^0_{C_i/K} \to \Pic^0_{C_{i+1}/K} \to 0\end{align*} \cite[Proposition 2.30]{Ov}. Counting dimensions and using that quotients of split unipotent groups are split unipotent \cite[Theorem B.3.4]{CGP}, we see that $\Pic^0_{C/K}$ has a non-trivial split unipotent subquotient, which is impossible. In particular, $C$ is seminormal. By \cite[Proposition 2.30]{Ov}, we see that the map $\Pic^0_{C/K} \to \Pic^0_{\widetilde{C}/K}$ is surjective, so $\Pic^0_{\widetilde{C}/K}$ is semiabelian. Hence the same is true for $\Pic^0_{C'/K},$ where $C'$ is an irreducible component of $\widetilde{C}.$ By Proposition \ref{Steinredprop} together with the fact that $\delta(K)=1$ since $k$ is perfect (Lemma \ref{plusonelem}), we conclude that $C'$ is geometrically integral over the field $\Gamma(C', \Og_{C'}).$ After replacing $\Gamma(C', \Og_{C'})$ by a finite separable extension if necessary, \cite[ Chapter 9.2, Proposition 4]{BLR} shows that $\Pic^0_{C'/\Gamma(C', \Og_{C'})}$ contains no algebraic torus. In particular, $\Pic^0_{C'/K}=\Res_{\Gamma(C', \Og_{C'})/K} \Pic^0_{C'/\Gamma(C', \Og_{C'})}$ contains no torus and is therefore an Abelian variety. Hence $\Gamma(C', \Og_{C'})$ must be separable over $K$ or we must have $\Pic^0_{C'/\Gamma(C', \Og_{C'})}=0$ \cite[Example A.5.6]{CGP}, which proves the remaining claim.
\end{proof}
\subsection{Proof of Chai's conjecture for Jacobians} \label{Chaiproofsubsec}
As before, we let $\Og_K$ be a complete discrete valuation ring with field of fractions $K$ and algebraically closed residue field $k.$ Let $C$ be a proper reduced curve over $K$ such that $G:=\Pic^0_{C/K}$ is a semiabelian variety with toric part $T$ and Abelian part $E.$ Choose a finite separable extension $L$ of $K$ such that $G$ has semiabelian reduction over the integral closure $\Og_L$ of $\Og_K$ in $L.$ Let $\mathscr{T},$ $\mathscr{N},$ and $\widetilde{\mathscr{N}}$ be the Néron lft-models of $T,$ $G,$ and $E,$ respectively, and let $ \mathscr{T}_L,$ $\mathscr{N}_L,$ and $\widetilde{\mathscr{N}}_L$ be the Néron lft-models of the base changes to $L.$ Following Chai \cite{Chai}, we define the \it base change conductor \rm of $G$ as
$$c(G):=\frac{1}{[L:K]} \ell_{\Og_L}(\mathrm{coker}((\Lie \mathscr{N})\otimes_{\Og_K} \Og_L \to \Lie \mathscr{N}_L)),$$
and make analogous definitions for $T$ and $E.$ The base change conductor does not depend upon the choice of $L.$ We have the following
\begin{theorem} \rm (Chai's conjecture for Jacobians) \it
If $G=\Pic^0_{C/K},$ $T,$ and $E$ are as above, then \label{Chaithm}
$$c(G)=c(T)+c(E).$$
\end{theorem}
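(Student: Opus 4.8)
The plan is to pull the whole problem back onto the cohomology of models of $C$ by means of a \emph{pair} of strong model pairs --- one over $\Og_K$ and one over $\Og_L$ --- and then to isolate the contribution of the toric part. By \cite{BLR}, Chapter~9.2, Proposition~5 we may assume $C$ reduced, and clearly we may assume $C$ connected. By Proposition~\ref{seminormalpropII}, $C$ is then seminormal, $\Pic^0_{\widetilde C/K}$ is an Abelian variety, and, since the kernel of $\Pic^0_{C/K}\to\Pic^0_{\widetilde C/K}$ is a torus, $E\cong\Pic^0_{\widetilde C/K}$ and $T=\ker(\Pic^0_{C/K}\to\Pic^0_{\widetilde C/K})$. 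First I would fix a strong model pair $\psi\colon\widetilde{\mathscr C}\to\mathscr C$ over $\Og_K$ with $\mathscr C$ semi-factorial, $\widetilde{\mathscr C}$ regular, and co-Cartesian square associated to $\mathcal B\subseteq\mathcal A$ (its existence being guaranteed by Proposition~\ref{comprigexprop} together with the constructions underlying Corollary~\ref{N�risomcor}). Next I would construct a \emph{compatible} strong model pair $\psi'\colon\widetilde{\mathscr C}'\to\mathscr C'$ over $\Og_L$: take $\widetilde{\mathscr C}'$ a regular model of $\widetilde C_L$ dominating $\widetilde{\mathscr C}\times_{\Og_K}\Og_L$, take $\mathcal A'$, $\mathcal B'$ the normalisations of $\mathcal A\otimes_{\Og_K}\Og_L$, $\mathcal B\otimes_{\Og_K}\Og_L$, and let $\mathscr C'$ be the push-out $\widetilde{\mathscr C}'\cup_{\Spec\mathcal A'}\Spec\mathcal B'$ (blown up further, if necessary, to make it semi-factorial), with its natural $\Og_L$-morphisms to $\widetilde{\mathscr C}\times_{\Og_K}\Og_L$ and $\mathscr C\times_{\Og_K}\Og_L$. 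By Corollary~\ref{N�risomcor}, $P^{\mathrm{sep}}_{\mathscr C/\Og_K}=\mathscr N$ and $P^{\mathrm{sep}}_{\mathscr C'/\Og_L}=\mathscr N_L$, while by \cite{LLR}, Theorem~3.1 (applied componentwise to $\widetilde C$, the genus-$0$ components being irrelevant) the regular models $\widetilde{\mathscr C}$, $\widetilde{\mathscr C}'$ govern $\widetilde{\mathscr N}$, $\widetilde{\mathscr N}_L$.

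For the transport of the conductors I would introduce the following bookkeeping device. For a homomorphism $\phi\colon M\to N$ of finitely generated $\Og_K$-modules which becomes an isomorphism after $\otimes_{\Og_K}K$, set $\mathrm{sl}(\phi):=\ell_{\Og_K}(\mathrm{coker}\,\phi)-\ell_{\Og_K}(\ker\phi)$; this invariant is additive along compositions and along short exact sequences of such maps, and, because $\Og_L$ is free over $\Og_K$ with $[L:K]=e_{L/K}$, one has $\mathrm{sl}_{\Og_L}(\phi\otimes_{\Og_K}\Og_L)=[L:K]\,\mathrm{sl}_{\Og_K}(\phi)$. By Corollary~\ref{lengthscor} and its analogue over $\Og_L$, the maps $\widetilde\lambda\colon H^1(\mathscr C,\Og_{\mathscr C})\to\Lie\mathscr N$ and $\widetilde\lambda'\colon H^1(\mathscr C',\Og_{\mathscr C'})\to\Lie\mathscr N_L$ satisfy $\mathrm{sl}(\widetilde\lambda)=\mathrm{sl}(\widetilde\lambda')=0$, and likewise over the regular models by \cite{LLR}, Theorem~3.1. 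Feeding this into the commutative square relating the base change morphism $b_{\mathscr C}\colon H^1(\mathscr C,\Og_{\mathscr C})\otimes_{\Og_K}\Og_L\to H^1(\mathscr C',\Og_{\mathscr C'})$ to $b_{\mathscr N}\colon\Lie\mathscr N\otimes_{\Og_K}\Og_L\to\Lie\mathscr N_L$ through $\widetilde\lambda,\widetilde\lambda'$, additivity of $\mathrm{sl}$ gives
$$\mathrm{sl}(b_{\mathscr C})=\mathrm{sl}(b_{\mathscr N})=\ell_{\Og_L}(\mathrm{coker}\,b_{\mathscr N})=[L:K]c(G),$$
the second equality because $\Lie\mathscr N$, $\Lie\mathscr N_L$ are free of the same rank. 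The same argument, run over the components of $\widetilde C$ and using compatibility of the base change conductor with Weil restriction, yields $[L:K]c(E)=\mathrm{sl}(b_{\widetilde{\mathscr C}})$ with $b_{\widetilde{\mathscr C}}\colon H^1(\widetilde{\mathscr C},\Og_{\widetilde{\mathscr C}})\otimes_{\Og_K}\Og_L\to H^1(\widetilde{\mathscr C}',\Og_{\widetilde{\mathscr C}'})$.

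Next I would exploit the co-Cartesian square. Since $\psi$ is finite and $\Spec\mathcal A,\Spec\mathcal B$ are affine, the long exact cohomology sequence of $0\to\Og_{\mathscr C}\to\psi_\ast\Og_{\widetilde{\mathscr C}}\oplus\Og_{\Spec\mathcal B}\to\Og_{\Spec\mathcal A}\to 0$ collapses to $0\to\Gamma(\mathscr C,\Og_{\mathscr C})\to\Gamma(\widetilde{\mathscr C},\Og_{\widetilde{\mathscr C}})\oplus\mathcal B\to\mathcal A\to H^1(\mathscr C,\Og_{\mathscr C})\to H^1(\widetilde{\mathscr C},\Og_{\widetilde{\mathscr C}})\to 0$; all four flat terms base change, so setting $W_K=\mathrm{coker}(\Gamma(\widetilde{\mathscr C},\Og_{\widetilde{\mathscr C}})\oplus\mathcal B\to\mathcal A)$ one gets a short exact sequence $0\to W_K\to H^1(\mathscr C,\Og_{\mathscr C})\to H^1(\widetilde{\mathscr C},\Og_{\widetilde{\mathscr C}})\to 0$ and its counterpart over $\Og_L$. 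The snake lemma applied to the base change map of these two sequences, together with the two displayed formulas, gives $[L:K](c(G)-c(E))=\mathrm{sl}(b_W)$ with $b_W\colon W_K\otimes_{\Og_K}\Og_L\to W_L$. It remains to identify $\mathrm{sl}(b_W)$ with $[L:K]c(T)$. Here I would use that the generic fibre of the exact sequence of Proposition~\ref{Otherpaperprop} presents $T$ as the cokernel of a closed immersion of tori $S_1\hookrightarrow S_2$ with $S_2=\Res_{A_K/K}\Gm/\Res_{B_K/K}\Gm$ ($A_K,B_K$ the generic fibres of $\mathcal A,\mathcal B$) and $S_1=V_{\widetilde C}^\times/V_C^\times$; by Lemma~\ref{Rigexseqlem} and \cite{Ov}, Lemmata~2.8 and~2.9 the group schemes $\mathscr S_2=\Res_{\mathcal A/\Og_K}\Gm/\Res_{\mathcal B/\Og_K}\Gm$ and $\mathscr S_1=V_{\widetilde{\mathscr C}}^\times/V_{\mathscr C}^\times$ are the N\'eron lft-models of $S_2,S_1$, and --- again via the co-Cartesian square --- $\Lie\mathscr S_1\to\Lie\mathscr S_2$ is injective with cokernel precisely $W_K$, compatibly with base change to $\Og_L$. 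As $\Lie\mathscr S_1,\Lie\mathscr S_2$ are free, this yields $\mathrm{sl}(b_W)=[L:K](c(S_2)-c(S_1))$.

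Thus the theorem reduces to the single identity $c(S_2)-c(S_1)=c(T)$, that is, to Chai's conjecture for the short exact sequence of tori $0\to S_1\to S_2\to T\to 0$, and this is the step I expect to be the main obstacle: it is not formal, since the sequence of N\'eron lft-models of this particular sequence need \emph{not} be exact, and the extension is genuinely wildly --- indeed inseparably --- ramified. My plan for it is to use that $S_1$ and $S_2$ are not arbitrary but are quotients of quasi-trivial tori read off from the combinatorics of the special fibre $\mathscr C_k$ (the cocharacter lattice of $T$ being the first homology of the dual graph of $\mathscr C_k$, which sits inside the permutation lattices spanned by the edges and the vertices of that graph), so that $c(S_1)$, $c(S_2)$ and $c(T)$ can each be expressed through the differents of the extensions $\Og_K\subseteq\mathcal A,\mathcal B,\Gamma(\widetilde{\mathscr C},\Og_{\widetilde{\mathscr C}}),\Gamma(\mathscr C,\Og_{\mathscr C})$ --- for which Chai's conjecture holds trivially, the relevant N\'eron-model sequences being exact by \cite{Ov}, Lemmata~2.8 and~2.9 --- after which the desired equality becomes a bookkeeping identity among these differents; keeping track of their inseparable parts is the delicate point. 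Granting this, combining with $[L:K](c(G)-c(E))=[L:K](c(S_2)-c(S_1))$ gives $c(G)=c(T)+c(E)$.
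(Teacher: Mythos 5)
Your bookkeeping up to the reduction $[L:K]\bigl(c(G)-c(E)\bigr)=\mathrm{sl}(b_W)$ is sound and in fact runs parallel to the paper's own computation (your $W_K$ is the paper's $\ker h$ from the sequence (\ref{cohomseq}), and your $\mathrm{sl}(\widetilde\lambda)=\mathrm{sl}(\widetilde\lambda')=0$ is Corollary \ref{lengthscor} and its $\Og_L$-analogue); the auxiliary construction of a compatible semi-factorial strong model pair over $\Og_L$ is also plausible by the paper's methods. The genuine gap is the final step, which you yourself flag: the identity $c(S_2)-c(S_1)=c(T)$ for $0\to S_1\to S_2\to T\to 0$ is precisely where the whole difficulty of Chai's conjecture is concentrated, and your proposal leaves it unproven. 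It is not a bookkeeping identity among differents. First, $S_1=V^\times_{\widetilde C}/V^\times_C$ and $S_2=\Res_{A_K/K}\Gm/\Res_{B_K/K}\Gm$ need not be tori at all: $\Gamma(\widetilde C,\Og_{\widetilde C})$, $A_K$ and $B_K$ may be inseparable over $K$ (only the quotient $T$ is a torus, by semiabelianness of $G$), so there is no cocharacter-lattice or Artin-conductor description to fall back on, and no known additivity theorem for base change conductors applies. Second, the relevant N\'eron lft-model sequences for groups of exactly this shape are the ones that fail to be exact in general (Paragraph \ref{examplepar} exhibits such a failure), so exactness cannot be invoked ``by \cite{Ov}, Lemmata 2.8 and 2.9''; those lemmata only say that $\Res_{\mathcal B/\Og_K}\NGm\to\Res_{\mathcal A/\Og_K}\NGm$ is a closed immersion whose quotient is the N\'eron lft-model of its generic fibre. (Also, your $\mathscr S_i$, built from $\Gm$ rather than from the N\'eron lft-model $\NGm$ of $\Gm$, are not N\'eron lft-models, though this does not affect the Lie algebras.)

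The paper never needs such an additivity statement, and this is the idea your plan is missing. It keeps a single strong model pair over $\Og_K$ and uses two inputs: (a) over $\Og_L$, where $T$ is split and $G$ has semiabelian reduction, the sequence $0\to\mathscr T_L\to\mathscr N_L\to\widetilde{\mathscr N}_L\to 0$ \emph{is} exact (the argument of \cite{CY}, Lemma 11.2); and (b) Lemma \ref{samelengthlemII}, which produces a map $\hbar\colon\ker h\to\Lie\mathscr T$ directly to the N\'eron lft-model of the actual toric part $T$ over $\Og_K$, generically an isomorphism with kernel and cokernel of equal length --- proved via the N\'eron mapping property of $\mathscr R_{\mathcal A/\mathcal B}\to\mathscr T$, vanishing of the relevant Brauer groups, and \cite{LLR}, Theorem 2.1(a). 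Feeding (a) and (b), together with Corollary \ref{lengthscor}, into the snake lemma over $\Og_L$ yields $c(G)=c(T)+c(E)$ with no additivity for tori (or for inseparable Weil restrictions) ever required. Unless you can actually prove your toric identity --- which would amount to an independent new case of Chai-type additivity in a wildly, indeed inseparably, ramified setting --- the proposal does not close.
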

The proof will occupy the remainder of this section. Recall that $L$ is a finite separable extension of $K$ such that $\Pic^0_{C/K}$ has semiabelian reduction over $\Og_L;$ in particular, the torus $T$ splits over $L.$ We begin with the following
\begin{proposition}
Let $D$ be a proper reduced seminormal curve over $K.$ Then $D$ admits a strong model pair $\psi\colon \widetilde{\mathscr{D}} \to  \mathscr{D}$ such that $\mathscr{D}$ is semi-factorial. \label{modpairexprop}
\end{proposition}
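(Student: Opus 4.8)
The plan is to build $\mathscr{D}$ and $\widetilde{\mathscr{D}}$ simultaneously, starting from a regular model of the normalisation and then descending to a semi-factorial model of $D$ via a push-out construction. First I would choose a regular proper flat model $\widetilde{\mathscr{D}} \to \Spec \Og_K$ of the normalisation $\widetilde{D}$; such a model exists by resolution of singularities for arithmetic surfaces (the relative dimension is $1$, so this is Lipman's theorem). By P\'epin's work on semi-factorial models (\cite{P\'ep}), after a further blow-up we may assume $\widetilde{\mathscr{D}}$ is semi-factorial, i.e.\ $\Pic \widetilde{\mathscr{D}} \to \Pic \widetilde{D}$ is surjective; regularity already guarantees $\widetilde{\mathscr{C}}$ behaves well. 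Since $D$ is seminormal, the normalisation morphism $\widetilde{D} \to D$ factors through a chain of ``elementary'' gluings along reduced closed points, by \cite{Ov}, Theorem 2.24 (this is exactly the structure exploited in the proof of Proposition \ref{seminormalpropII}): $D$ is obtained from $\widetilde{D}$ by a finite sequence of push-outs $\widetilde{D}' \leftarrow \Spec \ell' \rightrightarrows \Spec \ell$ with $\ell'/\ell$ a finite (automatically \'etale, by seminormality) extension of fields, or more precisely gluings that identify finitely many points with finite residue-field extensions.

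Next I would spread this gluing datum out over $\Og_K$. Concretely, the finite set of closed points of $D$ at which $\widetilde{D}\to D$ fails to be an isomorphism, together with their preimages in $\widetilde{D}$, extends to closed subschemes $\Spec \mathcal{B} \subseteq \mathscr{D}$ (to be constructed) and $\Spec \mathcal{A} \subseteq \widetilde{\mathscr{D}}$ that are finite flat over $\Og_K$; by choosing the points in the special fibre of $\widetilde{\mathscr{D}}$ carefully — smooth points not lying on intersections of components, with Cohen--Macaulay local rings, exactly as in the proof of Proposition \ref{comprigexprop} — one arranges that $\mathcal{A}$ is a finite product of regular (indeed discrete valuation) rings, finite faithfully flat over $\Og_K$, with generic fibre the relevant points of $\widetilde{D}$. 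One then takes $\mathcal{B}$ to be the appropriate subring of $\mathcal{A}$ reflecting the gluing datum over $K$ (its generic fibre being the points of $D$), and one must check $\mathcal{B}$ is again a finite product of regular rings finite faithfully flat over $\Og_K$: this is where seminormality of $D$ over $K$ and the \'etale nature of the residue extensions are used, so that $\Spec\mathcal{B}$ is a seminormal, hence (in dimension $1$, over the DVR) regular, semilocal scheme. Define $\mathscr{D} := \widetilde{\mathscr{D}} \cup_{\Spec \mathcal{A}} \Spec \mathcal{B}$, which exists by the push-out results recalled in \cite{Ov}, subsection 2.4.2, and is proper and flat over $\Og_K$ by the base-change compatibility of such push-outs (\cite{Ov}, Proposition 2.19). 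By construction the resulting $\psi\colon \widetilde{\mathscr{D}}\to\mathscr{D}$ is finite, $\Og_K$-universally scheme-theoretically dominant (again by \cite{Ov}, Proposition 2.19), an isomorphism away from the finite flat closed subset $\Spec\mathcal{B}$, and $\psi_K$ is the normalisation morphism (because the push-out on generic fibres recovers the chain of gluings defining $D\supseteq$ its seminormalisation, which is $D$ itself). Thus $\psi$ is a strong model pair.

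It remains to see that $\mathscr{D}$ is semi-factorial, i.e.\ $\Pic\mathscr{D}\to\Pic D$ is surjective. Here I would use the exact sequence of Proposition \ref{Otherpaperprop} in the form
$$ \Pic_{\mathscr{D}/\Og_K} \to \Pic_{\widetilde{\mathscr{D}}/\Og_K} \to 0$$
together with its generic-fibre counterpart, and compare: given a line bundle on $D$, pull it back to $\widetilde{D}$, lift to $\widetilde{\mathscr{D}}$ by semi-factoriality of $\widetilde{\mathscr{D}}$, and then descend along the push-out, the obstruction living in the cokernel term $\Res_{\mathcal{A}/\Og_K}\Gm/\Res_{\mathcal{B}/\Og_K}\Gm$, whose $\Og_K$-points surject onto the corresponding $K$-points because $\Og_K$ is strictly Henselian and $\mathcal{A},\mathcal{B}$ are products of Henselian DVRs (units lift). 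A small diagram chase with the snake lemma then yields surjectivity of $\Pic\mathscr{D}\to\Pic D$.

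The main obstacle I anticipate is the construction of the integral gluing datum $\mathcal{B}\subseteq\mathcal{A}$ with $\Spec\mathcal{B}$ regular: one must simultaneously (a) position the points in the special fibre of $\widetilde{\mathscr{D}}$ so that the push-out is flat and proper and $\widetilde{\mathscr{D}}$ stays regular near them, and (b) ensure the ring $\mathcal{B}$ obtained by imposing the gluing relations is still a product of DVRs rather than something non-normal. Part (b) is precisely where the hypothesis that $D$ be seminormal (equivalently, that the residue extensions in the gluing are separable) is indispensable — for a general, non-seminormal $D$ one would be forced to glue along a non-reduced or inseparable datum and $\mathcal{B}$ would fail to be regular, so no strong model pair could exist. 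I expect the verification that the semi-factoriality of $\widetilde{\mathscr{D}}$ transfers to $\mathscr{D}$ (the last paragraph above) to be routine given Proposition \ref{Otherpaperprop}, and the existence of $\widetilde{\mathscr{D}}$ semi-factorial to be immediate from resolution plus \cite{P\'ep}.
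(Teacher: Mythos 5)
Your overall skeleton (regular proper model $\widetilde{\mathscr{D}}$ of $\widetilde{D}$, spreading out the conductor datum to finite flat regular $\Og_K$-algebras $\mathcal{B}\subseteq\mathcal{A}$, and forming the push-out $\mathscr{D}=\widetilde{\mathscr{D}}\cup_{\Spec\mathcal{A}}\Spec\mathcal{B}$) is exactly the construction the paper invokes: its proof is a one-line reference to \cite{Ov}, Theorem 4.3, with the remark that integrality of the curve is not needed there. However, the step you declare ``routine'' -- semi-factoriality of $\mathscr{D}$ -- is precisely where your argument has a genuine gap. You claim that the obstruction to descending an extension of a line bundle lives in $\Res_{\mathcal{A}/\Og_K}\Gm/\Res_{\mathcal{B}/\Og_K}\Gm$ and that its $\Og_K$-points surject onto its $K$-points ``because units lift.'' This is false: those point groups are $\mathcal{A}^\times/\mathcal{B}^\times$ and $A^\times/B^\times$, and the map between them is essentially never surjective when the gluing is non-trivial. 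For a $K$-rational node one has $A=K\times K,$ $B=K$ diagonally, and the class of $(\pi_K,1)$ in $(K^\times\times K^\times)/K^\times$ is not in the image of $(\Og_K^\times\times\Og_K^\times)/\Og_K^\times$; in general the cokernel is a lattice (the component group of the N\'eron lft-model of this torus), not zero. Concretely: a line bundle on $D$ is an extension of its pullback to $\widetilde{D}$ together with a gluing unit in $A^\times$ (modulo $\mathcal{A}^\times$-changes of trivialisation and $B^\times$), and after lifting to $\widetilde{\mathscr{D}}$ this unit generally has non-trivial valuations at the points of $\Spec\mathcal{A}$, which cannot be removed by units alone.

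The missing idea is that these valuations must be absorbed by twisting the chosen extension on $\widetilde{\mathscr{D}}$ by divisors supported in the special fibre, and for this one has to modify $\widetilde{\mathscr{D}}$ further (blow-ups arranging the closures of the points of $\Spec A$ to be regular, transverse to the special fibre, and positioned on components so that vertical divisors realise all the required valuation vectors along $\Spec\mathcal{A}$). This is the actual content of \cite{Ov}, Theorem 4.3 (following P\'epin's method in \cite{P�p}), and it cannot be replaced by a snake-lemma chase starting from Proposition \ref{Otherpaperprop}. A secondary slip: seminormality of $D$ does not make the residue extensions in the gluing datum \'etale or separable -- it only forces the gluing to be along reduced subschemes; what makes $\mathcal{A}$ and $\mathcal{B}$ finite products of (complete) discrete valuation rings is the completeness (hence excellence) of $\Og_K$, not separability, so your stated mechanism for the regularity of $\mathcal{B}$ is also not quite the right one, though the conclusion itself is fine.
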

\begin{proof}
The semi-factorial model $\mathscr{D}$ of $D$ constructed in \cite[Theorem 4.3]{Ov} is naturally endowed with a morphism $\psi\colon \widetilde{\mathscr{D}} \to  \mathscr{D},$ which is a strong model pair by construction.
\end{proof}\\
In particular, Proposition \ref{modpairexprop} provides a co-cartesian diagram 
$$\begin{CD}
\widetilde{\mathscr{D}} @>{\psi}>> \mathscr{D}\\
@AAA@AA{\iota}A\\
\Spec \mathcal{A} @>>> \Spec \mathcal{B},
\end{CD}$$
where $\mathcal{A}$ and $\mathcal{B}$ are finite products of regular finite flat extensions of $\Og_K.$ We denote their generic fibres by $A$ and $B,$ respectively.
In particular, we have an exact sequence of coherent $\Og_{\mathscr{D}}$-modules
$$0 \to \Og_{\mathscr{D}} \to \psi_\ast \Og_{\widetilde{\mathscr{D}}} \to \iota_\ast (\mathcal{A}/\mathcal{B}) \to 0.$$ Taking cohomology, we obtain an exact sequence
\begin{align}0 \to\Gamma(\mathscr{D}, \Og_{\mathscr{D}}) \to \Gamma(\widetilde{\mathscr{D}}, \Og_{\widetilde{\mathscr{D}}})\to \mathcal{A}/\mathcal{B} \to H^1(\mathscr{D}, \Og_{\mathscr{D}}) \overset{h}\to H^1(\widetilde{\mathscr{D}}, \Og_{\widetilde{\mathscr{D}}}) \to 0.\label{cohomseq}\end{align}
\begin{lemma}
Let $D$ be as in Proposition \ref{modpairexprop} and assume that $\Pic^0_{D/K}$ is semiabelian. Let $\mathscr{T}$ be the Néron lft-model of its toric part. Then there is a canonical map
$$\hbar\colon \ker h \to \Lie \mathscr{T}$$ \label{samelengthlemII}
which is generically an isomorphism and whose kernel and cokernel are of the same length.  
\end{lemma}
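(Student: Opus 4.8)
The plan is to realise $\ker h$ as the Lie algebra of an explicit smooth $\Og_K$-model of the toric part $T$, to define $\hbar$ via the N\'eron mapping property, and then to deduce the length equality by an application of \cite{LLR}, Theorem~2.1(a) in the spirit of the proof of Theorem~\ref{samelengththm}.

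First I would unwind $h$. Since $\psi$ is finite, $R^q\psi_\ast\Og_{\widetilde{\mathscr{D}}}=0$ for $q>0$, so $h$ is the pull-back $\psi^\ast\colon H^1(\mathscr{D},\Og_{\mathscr{D}})\to H^1(\widetilde{\mathscr{D}},\Og_{\widetilde{\mathscr{D}}})$, and (\ref{cohomseq}) identifies $\ker h$ with $\mathrm{coker}(\Gamma(\widetilde{\mathscr{D}},\Og_{\widetilde{\mathscr{D}}})\to\mathcal{A}/\mathcal{B})$. On the other hand, put $\mathscr{R}:=\Res_{\mathcal{A}/\Og_K}\Gm/\Res_{\mathcal{B}/\Og_K}\Gm$, $\mathscr{M}:=V_{\widetilde{\mathscr{D}}}^\times/V_{\mathscr{D}}^\times$ (a smooth closed subgroup of $\mathscr{R}$ by Proposition~\ref{Otherpaperprop}), and $\mathscr{Q}:=\mathscr{R}/\mathscr{M}$. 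Restricting the exact sequence of Proposition~\ref{Otherpaperprop} to the degree-zero subfunctors $P_{-/\Og_K}$ yields a short exact sequence $0\to\mathscr{Q}\to P_{\mathscr{D}/\Og_K}\to P_{\widetilde{\mathscr{D}}/\Og_K}\to 0$ of smooth $\Og_K$-group spaces whose second map is a smooth surjection; hence $\Lie$ is exact on it, and using $\Lie P_{\mathscr{D}/\Og_K}=H^1(\mathscr{D},\Og_{\mathscr{D}})$ and likewise for $\widetilde{\mathscr{D}}$ we obtain a canonical identification $\ker h=\Lie\mathscr{Q}$. Finally $\mathscr{Q}_K=\ker(\Pic^0_{D/K}\to\Pic^0_{\widetilde D/K})$ is a connected affine subgroup of the semiabelian variety $\Pic^0_{D/K}$ whose quotient $\Pic^0_{\widetilde D/K}$ is an Abelian variety (Proposition~\ref{seminormalpropII}), so $\mathscr{Q}_K$ is precisely the toric part $T$. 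Since $\mathscr{Q}$ is smooth and separated over $\Og_K$, the identity of $T=\mathscr{Q}_K$ extends uniquely to an $\Og_K$-homomorphism $\mathscr{Q}\to\mathscr{T}$, and I define $\hbar\colon\ker h=\Lie\mathscr{Q}\to\Lie\mathscr{T}$ to be the induced map on Lie algebras; it is a generic isomorphism because $\mathscr{Q}\to\mathscr{T}$ is the identity on generic fibres.

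It remains to prove $\ell_{\Og_K}(\ker\hbar)=\ell_{\Og_K}(\mathrm{coker}\,\hbar)$. I would do this by applying \cite{LLR}, Theorem~2.1(a) to the morphism $\mathscr{Q}\to\mathscr{T}$, which requires only that $\mathscr{Q}(\Og_K)\to\mathscr{T}(\Og_K)=T(K)$ be surjective. This should follow by combining: (i) $\mathscr{Q}(\Og_K)=\ker\big(P_{\mathscr{D}/\Og_K}(\Og_K)\to P_{\widetilde{\mathscr{D}}/\Og_K}(\Og_K)\big)$ (left-exactness of the first exact sequence above); (ii) the surjectivity of $P_{\mathscr{D}/\Og_K}(\Og_K)\to\Pic^0_{D/K}(K)$, which holds because $\mathscr{D}$ is semi-factorial and $\Og_K$ is strictly Henselian, exactly as in the proof of Theorem~\ref{samelengththm}; and (iii) a diagram chase against $0\to T(K)\to\Pic^0_{D/K}(K)\to\Pic^0_{\widetilde D/K}(K)$, using that the kernel of $P_{\widetilde{\mathscr{D}}/\Og_K}(\Og_K)\to\Pic^0_{\widetilde D/K}(K)$ is controlled (via $P_{\widetilde{\mathscr{D}}/\Og_K}\to P_{\widetilde{\mathscr{D}}/\Og_K}^{\mathrm{sep}}=\widetilde{\mathscr{N}}$, $\widetilde{\mathscr{D}}$ being regular). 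Alternatively, one can run the snake lemma on the Lie-algebra ladder relating $0\to\ker h\to H^1(\mathscr{D},\Og_{\mathscr{D}})\xrightarrow{h}H^1(\widetilde{\mathscr{D}},\Og_{\widetilde{\mathscr{D}}})\to 0$ to $0\to\Lie\mathscr{T}\to\Lie\mathscr{N}\to\Lie\widetilde{\mathscr{N}}$, feeding in the two equalities $\ell_{\Og_K}(\ker\widetilde\lambda)=\ell_{\Og_K}(\mathrm{coker}\,\widetilde\lambda)$ and $\ell_{\Og_K}(\ker\lambda)=\ell_{\Og_K}(\mathrm{coker}\,\lambda)$ of Corollary~\ref{lengthscor}, the surjectivity of $h$, and the surjectivity of $\mathscr{N}\to\widetilde{\mathscr{N}}$ (which follows from that of $P_{\mathscr{D}/\Og_K}\to P_{\widetilde{\mathscr{D}}/\Og_K}$ together with right-exactness of the separation functor).

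The main obstacle is this last step. The model $\mathscr{Q}$ is the ``standard'' multiplicative-type model of $T$ built from restrictions of scalars of $\Gm$, whereas $\mathscr{T}$ is the full N\'eron lft-model, and the sequence $0\to\mathscr{T}\to\mathscr{N}\to\widetilde{\mathscr{N}}\to 0$ need not be exact (so that $\ker(\Lie\mathscr{N}\to\Lie\widetilde{\mathscr{N}})$ is not literally $\Lie\mathscr{T}$); one must therefore control the discrepancy between $\mathscr{Q}$ and $\mathscr{T}$ directly --- equivalently, establish the surjectivity hypothesis of \cite{LLR}, Theorem~2.1(a), or make sure the snake-lemma bookkeeping closes with no residual cokernel. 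This is exactly the place where semi-factoriality of $\mathscr{D}$ (and, in the snake-lemma variant, the surjectivity of $\mathscr{N}\to\widetilde{\mathscr{N}}$) enters essentially; without semi-factoriality $P_{\mathscr{D}/\Og_K}^{\mathrm{sep}}$ need not be the N\'eron lft-model and the statement would fail.
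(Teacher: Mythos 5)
Your overall strategy (realise the relevant module as the Lie algebra of an explicit model of $T$ built from restrictions of scalars, map it to $\mathscr{T}$ by the N\'eron mapping property, and conclude via \cite{LLR}, Theorem 2.1(a) plus a surjectivity statement on $\Og_K$-points) is in the right spirit, but two steps that carry the actual content are not justified and would fail in exactly the cases the lemma must cover. First, the identification $\ker h=\Lie\mathscr{Q}$: you assert that $\mathscr{M}=V^\times_{\widetilde{\mathscr{D}}}/V^\times_{\mathscr{D}}$ is a \emph{smooth} closed subgroup ``by Proposition \ref{Otherpaperprop}'' and that $P_{\mathscr{D}/\Og_K}\to P_{\widetilde{\mathscr{D}}/\Og_K}$ is a smooth surjection of smooth group spaces. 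Proposition \ref{Otherpaperprop} gives only representability and finite presentation of the $V^\times$'s; when $\mathscr{D}$ or $\widetilde{\mathscr{D}}$ fails to be cohomologically flat in dimension zero (the interesting case, cf.\ \S\ref{examplepar}) the schemes $V^\times$ are not smooth, $\Pic_{\mathscr{D}/\Og_K}$ is not even known to be an algebraic space, and neither the smoothness of $\mathscr{Q}=\mathscr{R}/\mathscr{M}$ nor the exactness of $\Lie$ on your sequence is available. Smoothness of $\mathscr{Q}$ is load-bearing twice: it is what lets you invoke the N\'eron mapping property to get $\mathscr{Q}\to\mathscr{T}$ at all, and it would force $\ker h\cong\Lie\mathscr{Q}$ to be a free $\Og_K$-module, hence $\ker\hbar=0$, flattening the lemma to the unproved assertion $\mathrm{coker}\,\hbar=0$; the paper's later bookkeeping in the proof of Theorem \ref{Chaithm} explicitly allows $(\ker h)_{\mathrm{tors}}\neq 0$, and the content of the lemma is precisely the equality of two possibly nonzero lengths. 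The paper avoids $V^\times$ and $\Pic_{\mathscr{D}/\Og_K}$ entirely: it works with the honestly smooth affine groups $\Res_{\Gamma(\widetilde{\mathscr{D}},\Og_{\widetilde{\mathscr{D}}})/\Og_K}\Gm$, $\Res_{\mathcal{A}/\Og_K}\Gm$, $\Res_{\mathcal{B}/\Og_K}\Gm$ and their N\'eron-model counterparts, and defines $\hbar$ at the module level on $\ker h=\mathrm{coker}\bigl(\Gamma(\widetilde{\mathscr{D}},\Og_{\widetilde{\mathscr{D}}})\to\mathcal{A}/\mathcal{B}\bigr)$ via the canonical map $\mathscr{R}_{\mathcal{A}/\mathcal{B}}\to\mathscr{T}$, checking that the composite from $\Res_{\Gamma(\widetilde{\mathscr{D}},\Og_{\widetilde{\mathscr{D}}})/\Og_K}\Gm$ vanishes.

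Second, the length equality itself is not proved, as you acknowledge. Applying \cite{LLR}, Theorem 2.1(a) ``to $\mathscr{Q}\to\mathscr{T}$'' is not how that theorem functions here: apart from the finite-type issue ($\mathscr{T}$ is only an lft-model; the paper passes to $\mathscr{T}^0$ and to finite-type pieces), the theorem is used in the paper to compare $\mathrm{coker}\bigl(\Lie R_{\mathcal{A}/\mathcal{B}}\to\Lie\mathscr{T}\bigr)$ with the Lie-cokernel of the group smoothening of the non-smooth kernel $\mathscr{K}\cap R_{\mathcal{A}/\mathcal{B}}$, which Step 3 of the paper's proof identifies with $\mathscr{R}'_{\widetilde{D}/D}$; it is this smoothening term, fed through the snake lemma against the sequence (\ref{cohomseq}), that produces $\ell_{\Og_K}(\ker\hbar)=\ell_{\Og_K}(\mathrm{coker}\,\hbar)$. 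Your surjectivity input is likewise only sketched (your item (iii) is a plan, not an argument), and your closing remark that semi-factoriality of $\mathscr{D}$ is the essential ingredient at this point is off target: the paper obtains the needed surjectivity of $\mathscr{R}_{\mathcal{A}/\mathcal{B}}(\Og_K)\to\mathscr{T}(\Og_K)$ from the vanishing $H^1\bigl(K,\Res_{\Gamma(\widetilde{D},\Og_{\widetilde{D}})/K}\Gm/\Res_{\Gamma(D,\Og_D)/K}\Gm\bigr)=\ker\bigl(\Br(\Gamma(D,\Og_D))\to\Br(\Gamma(\widetilde{D},\Og_{\widetilde{D}}))\bigr)=0$ (algebraically closed residue field) together with \cite{BLR}, Chapter 9.6, Lemma 2 --- semi-factoriality plays no role in this lemma; it is needed for Theorem \ref{samelengththm} and for identifying $P^{\mathrm{sep}}_{\mathscr{C}/\Og_K}$ with the N\'eron lft-model, i.e.\ for the maps $\lambda$, $\widetilde{\lambda}$, not for $\hbar$.
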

\begin{proof} 
The Néron lft-model of $\Gm$ will be denoted by $\NGm;$ the base ring will always be clear from context. Define
$$R_{\widetilde{D}/D} := \Res_{\Gamma(\widetilde{\mathscr{D}}, \Og_{\widetilde{\mathscr{D}}})/\Og_K}\Gm / \Res_{\Gamma({\mathscr{D}}, \Og_{{\mathscr{D}}})/\Og_K}\Gm,$$
$$\mathscr{R}_{\widetilde{D}/D} := \Res_{\Gamma(\widetilde{\mathscr{D}}, \Og_{\widetilde{\mathscr{D}}})/\Og_K}\NGm / \Res_{\Gamma({\mathscr{D}}, \Og_{{\mathscr{D}}})/\Og_K}\NGm,$$
$$R_{\mathcal{A}/\mathcal{B}} :=\Res_{\mathcal{A}/\Og_K} \Gm / \Res_{\mathcal{B}/\Og_K} \Gm,$$
and
$$\mathscr{R}_{\mathcal{A}/\mathcal{B}} :=\Res_{\mathcal{A}/\Og_K} \NGm / \Res_{\mathcal{B}/\Og_K} \NGm.$$ Note that these group schemes are smooth and separated over $\Og_K.$\\
\it Step 1: \rm Since $\mathscr{R}_{\mathcal{A}/\mathcal{B}}$ is smooth over $\Og_K,$ there is a canonical map $\mathscr{R}_{\mathcal{A}/\mathcal{B}} \to \mathscr{T},$ which induces a map $\mathcal{A}/\mathcal{B} \to \Lie \mathscr{T}$ on Lie algebras. The composition $\Res_{\Gamma(\widetilde{\mathscr{D}}, \Og_{\widetilde{\mathscr{D}}})/\Og_K}\Gm \to \mathscr{T}$ vanishes generically (and hence everywhere), so the induced map $\Gamma(\widetilde{\mathscr{D}}, \Og_{\widetilde{\mathscr{D}}}) \to \mathcal{A}/\mathcal{B} \to \Lie\mathscr{T}$ vanishes. This induces the desired map $\hbar.$ \\
\it Step 2: \rm We claim that the map $\mathscr{R}_{\mathcal{A}/\mathcal{B}} \to \mathscr{T}$ is surjective on $\Og_K$-points. Since both group schemes are the Néron lft-models of their generic fibres, it suffices to show that the map is surjective on $K$-points. But this follows from the fact that 
\begin{align*}
& H^1(K, \Res_{\Gamma(\widetilde{{D}}, \Og_{\widetilde{{D}}})/K}\Gm / \Res_{\Gamma({{D}}, \Og_{{{D}}})/K}\Gm) \\
=& \ker ( \Br ( \Gamma({{D}}, \Og_{{{D}}}) ) \to \Br(\Gamma(\widetilde{{D}}, \Og_{\widetilde{{D}}})) )\\
=&0
\end{align*}
\cite[Theorem 1.2.15]{CS}. By \cite[Chapter 9.6, Lemma 2]{BLR}, the induced map
$$R_{\mathcal{A}/\mathcal{B}}(\Og_K) \to \mathscr{T}^0(\Og_K)$$ is surjective as well. \\
\it Step 3: \rm Let $\mathscr{K}$ be the kernel of the map $\mathscr{R}_{\mathcal{A}/\mathcal{B}} \to \mathscr{T}.$ Now we claim that the induced map 
$\rho\colon\mathscr{R}_{\widetilde{D}/D} \to \mathscr{K}$ satisfies the universal property of the smoothening (see \cite[p. 174]{BLR}). This follows immediately from the fact that $ \mathscr{R}_{\widetilde{D}/D}$ is the Néron lft-model of the generic fibre of $\mathscr{K}.$ Put $$ \mathscr{R}_{\widetilde{D}/D}':= \rho^{-1}(\mathscr{K}\cap R_{\mathcal{A}/\mathcal{B}}).$$ Then $\mathscr{R}_{\widetilde{D}/D}'$ is the group smoothening of $\mathscr{K}\cap R_{\mathcal{A}/\mathcal{B}};$ this is an immediate consequence of the universal properties of the group smoothening and the Néron lft-model. Note that, moreover, we have an exact sequence
$$0 \to \mathscr{K} \cap R_{\mathcal{A}/\mathcal{B}} \to  R_{\mathcal{A}/\mathcal{B}} \to \mathscr{T}^0$$
of group schemes of finite type over $\Og_K.$ \\ 
\it Step 4: \rm Consider the commutative diagram 
$$\begin{CD}
0@>>> \Lie R_{\widetilde{D}/D} @>>> \Lie R_{\mathcal{A}/\mathcal{B}} @>>> \ker h @>>> 0\\
&&@V{\alpha}VV@VV{=}V@VV{\hbar}V\\
0@>>> \Lie \mathscr{K} @>>> \Lie R_{\mathcal{A}/\mathcal{B}} @>>{u}> \Lie \mathscr{T};
\end{CD}$$
the top row being induced by the sequence (\ref{cohomseq}) above. The remainder of the argument now works as in \cite[p. 480]{LLR}: The snake lemma shows that $\ker \hbar=\mathrm{coker} \, \alpha,$ we clearly have $\mathrm{coker}\, \hbar = \mathrm{coker}\, u,$ and \cite[Theorem 2.1(a)]{LLR} shows that 
$$\ell_{\Og_K}( \mathrm{coker}\, \alpha) = \ell_{\Og_K}( \mathrm{coker}\, u).$$ \end{proof} \\
We choose a strong model pair $\psi \colon \widetilde{\mathscr{C}}\to  \mathscr{C}$ of $C$ such that $\mathscr{C}$ is semi-factorial; this is possible by Propositions \ref{seminormalpropII} and \ref{modpairexprop}. In particular, we have closed immersions $\Spec \mathcal{A} \to \widetilde{\mathscr{C}}$ and $\Spec \mathcal{B} \to {\mathscr{C}}$ such that $\mathcal{A}$ and $\mathcal{B}$ are regular. We have already chosen a finite separable extension $L$ of $K$ such that $G=\Pic^0_{C/K}$ acquires semiabelian reduction over $\Og_L$ (in particular, $T$ splits over $L$). Let $\mathscr{T}_L,$ $\mathscr{N}_L,$ and $\widetilde{\mathscr{N}}_L$ be the Néron lft-models of $T_L,$ $G_L$, and $E_L,$ respectively. The argument given in the proof of \cite[Lemma 11.2]{CY} shows that the sequence
$$0\to \mathscr{T}_L\to\mathscr{N}_L \to \widetilde{\mathscr{N}}_L\to 0$$
is exact. \\
We have now assembled all the tools needed to give the \it proof of Theorem \ref{Chaithm}. \rm First note that we have a commutative diagram

$$\begin{CD}
0@>>> (\ker h) \underset{\Og_K}{\otimes} \Og_L @>>> H^1(\mathscr{C}, \Og_{\mathscr{C}}) \underset{\Og_K}{\otimes} \Og_L@>>> H^1(\widetilde{\mathscr{C}}, \Og_{\widetilde{\mathscr{C}}})\underset{\Og_K}{\otimes} \Og_L @>>> 0 \\
&&@V{\phi}VV@V{\chi}VV@V{\widetilde{\chi}}VV \\
0 @>>> \Lie \mathscr{T}_L @>>> \Lie \mathscr{N}_L @>>> \Lie \widetilde{\mathscr{N}}_L @>>>0
\end{CD}$$
The snake lemma now tells us that 
\begin{align}
\ell_{\Og_L}(\ker \phi) - \ell_{\Og_L}(\ker \chi) + \ell_{\Og_L}(\ker \widetilde{\chi}) - \ell_{\Og_L}(\mathrm{coker}\,\phi) + \ell_{\Og_L}(\mathrm{coker}\, \chi) - \ell_{\Og_L}(\mathrm{coker}\, \widetilde{\chi})=0.\label{zeroeq1}\end{align}
Now note that the morphisms $\phi,$ $\chi,$ and $\widetilde{\chi}$ factor as
$$\begin{CD} (\ker h) \otimes_{\Og_K} \Og_L @>{\hbar \otimes \Og_L}>> (\Lie \mathscr{T}) \otimes_{\Og_K}\Og_L @>{\omega}>> \Lie \mathscr{T}_L,\end{CD}$$
$$\begin{CD}H^1(\mathscr{C}, \Og_{\mathscr{C}}) \otimes_{\Og_K} \Og_L @>{\lambda\otimes \Og_L}>> (\Lie \mathscr{N}) \otimes_{\Og_K} \Og_L @>{\xi}>> \Lie \mathscr{N}_L,\end{CD}$$ and 
$$\begin{CD}H^1(\widetilde{\mathscr{C}}, \Og_{\widetilde{\mathscr{C}}}) \otimes_{\Og_K} \Og_L @>{\widetilde{\lambda}\otimes \Og_L}>> (\Lie \widetilde{\mathscr{N}}) \otimes_{\Og_K} \Og_L @>{\widetilde{\xi}}>> \Lie \widetilde{\mathscr{N}}_L,\end{CD}$$
respectively. 
Moreover, note that 
$$\ker (\hbar \otimes \Og_L)= (\ker h)_{\mathrm{tors}} \otimes_{\Og_K} \Og_L = \ker \phi,$$
$$\ker (\lambda\otimes\Og_L) = H^1(\mathscr{C}, \Og_{\mathscr{C}})_{\mathrm{tors}}\otimes_{\Og_K} \Og_L = \ker \chi,$$ and
$$\ker (\widetilde{\lambda}\otimes\Og_L) = H^1(\widetilde{\mathscr{C}}, \Og_{\widetilde{\mathscr{C}}})_{\mathrm{tors}}\otimes_{\Og_K} \Og_L = \ker \widetilde{\chi}.$$
Because $\Og_L$ is flat over $\Og_K,$ Corollary \ref{lengthscor} and Lemma \ref{samelengthlemII} imply that 
$\ell_{\Og_L} (\ker(\hbar\otimes \Og_L)) = \ell_{\Og_L}(\mathrm{coker}(\hbar\otimes \Og_L)),$
$\ell_{\Og_L}(\ker (\lambda\otimes \Og_L)) = \ell_{\Og_L}(\mathrm{coker}(\lambda\otimes\Og_L)),$ and $\ell_{\Og_L}(\ker (\widetilde{\lambda}\otimes \Og_L)) = \ell_{\Og_L}(\mathrm{coker}(\widetilde{\lambda}\otimes\Og_L)).$ 
In particular, we have 
$$\ell_{\Og_L} (\mathrm{coker}\, \phi) = \ell_{\Og_L} (\ker(\hbar \otimes \Og_L)) + \ell_{\Og_L}(\mathrm{coker}\, \omega),$$
$$\ell_{\Og_L}(\mathrm{coker}\, \chi)= \ell_{\Og_L}(\ker(\lambda\otimes \Og_L)) + \ell_{\Og_L}(\mathrm{coker}\, \xi),$$ and
$$\ell_{\Og_L}(\mathrm{coker}\, \widetilde{\chi})= \ell_{\Og_L}(\ker(\widetilde{\lambda}\otimes \Og_L)) + \ell_{\Og_L}(\mathrm{coker}\,\widetilde{\xi}).$$ Plugging this into equation (\ref{zeroeq1}) gives
$$-\ell_{\Og_L}(\mathrm{coker}\, \omega) + \ell_{\Og_L}(\mathrm{coker}\, \xi) - \ell_{\Og_L}(\mathrm{coker} \, \widetilde{\xi})=0,$$ which implies the claim. \qed
\subsection{Exactness properties}
Once more we let $C$ be a proper reduced curve over $K$ whose Jacobian is semiabelian. For the sake of simplicity, we assume in this paragraph that $C$ is geometrically integral.  As above, we let $G:=\Pic^0_{C/K}$ and consider the exact sequence 
$0 \to T \to G \to E \to 0,$ where $E:=\Pic^0_{\widetilde{C}/K}$ is an Abelian variety and $T$ denotes the torus which comes from the singularities of $C.$ Let $\mathscr{T},$ $\mathscr{N},$ and $\widetilde{\mathscr{N}}$ be the Néron lft-models of $T,$ $G,$ and $E,$ respectively. As noted at the beginning of this article, the sequence
$$0 \to \mathscr{T} \to \mathscr{N} \to \widetilde{\mathscr{N}} \to 0$$ need not be exact. However, in our situation, we can at least measure the failure of exactness to some extent as follows: Let $\psi \colon \widetilde{\mathscr{C}} \to \mathscr{\mathscr{C}}$ be a strong model pair of $C$ over $\Og_K$ such that $\mathscr{C}$ is semi-factorial. Corollary \ref{Nérisomcor} tells us that there is a morphism
$$H^1(\mathscr{C}, \Og_{\mathscr{C}})\to \Lie\mathscr{N}$$ which induces the obvious identification at the generic fibre. We regard this morphism as an element of $D^b(\Og_K)$ by declaring that $H^1(\mathscr{C}, \Og_{\mathscr{C}})$ sit in degree 0. 
\begin{definition}
We shall denote the complex just constructed by $\mathbf{L}(\mathscr{C}).$ 
\end{definition}
This (perfect) complex turns out to contain much information about the exactness of the induced sequence of Néron models: 
\begin{proposition}\label{qiprop}
(i) The morphism $\mathscr{N} \to \widetilde{\mathscr{N}}$ is surjective in the fppf-topology; in particular, it is faithfully flat. \\
(ii) Suppose that $\psi\colon \widetilde{\mathscr{C}} \to \mathscr{C}$ is a strong model pair of $C$ with $\mathscr{C}$ semi-factorial. We obtain an induced map $\psi^\ast \colon \mathbf{L}(\mathscr{C}) \to \mathbf{L}(\widetilde{\mathscr{C}})$ such that the following are equivalent:\\
(a) The map $\mathscr{N} \to \widetilde{\mathscr{N}}$ is smooth,\\
(b) the sequence $0\to \mathscr{T} \to \mathscr{N} \to \widetilde{\mathscr{N}} \to 0$ is exact, \\
(c) the morphism $\psi^\ast$ is a quasi-isomorphism. \\
Moreover, we have $H^0(\mathbf{L}(\mathscr{C})) = H^1(\mathscr{C}, \Og_{\mathscr{C}})_{\mathrm{tors}}$ (and similarly for $\widetilde{\mathscr{C}}$). 
\end{proposition}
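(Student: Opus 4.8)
The plan is to work throughout with the identifications $\mathscr{N}=P^{\mathrm{sep}}_{\mathscr{C}/\Og_K}$ (Corollary \ref{Nérisomcor}) and $\widetilde{\mathscr{N}}=P^{\mathrm{sep}}_{\widetilde{\mathscr{C}}/\Og_K}$ (the case of a regular model: \cite{LLR}, Theorem 3.1 together with Corollary \ref{Nérisomcor}, noting that $\widetilde{\mathscr{C}}$ is its own strong model pair and is semi-factorial because it is regular), and to reduce all of part (ii) to the single statement that the map $\hbar\colon\ker h\to\Lie\mathscr{T}$ of Lemma \ref{samelengthlemII} is an isomorphism. For (i): restricting the exact sequence of Proposition \ref{Otherpaperprop} to its degree-zero (open and closed) part produces a surjection $P_{\mathscr{C}/\Og_K}\to P_{\widetilde{\mathscr{C}}/\Og_K}$ of sheaves for the fppf topology; composing with $P_{\widetilde{\mathscr{C}}/\Og_K}\to P^{\mathrm{sep}}_{\widetilde{\mathscr{C}}/\Og_K}=\widetilde{\mathscr{N}}$ and using that $\widetilde{\mathscr{N}}$ is separated, this factors through $\mathscr{N}=P^{\mathrm{sep}}_{\mathscr{C}/\Og_K}$, so $\mathscr{N}\to\widetilde{\mathscr{N}}$ is an epimorphism of fppf sheaves. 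On the generic and on the special fibre it then becomes a surjective homomorphism of group schemes locally of finite type over a field, hence faithfully flat, so the fibrewise flatness criterion gives faithful flatness of $\mathscr{N}\to\widetilde{\mathscr{N}}$ over $\Og_K$.

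For the description of $H^0$: the differential $\mu\colon H^1(\mathscr{C},\Og_{\mathscr{C}})\to\Lie\mathscr{N}$ defining $\mathbf{L}(\mathscr{C})$ becomes an isomorphism after $\otimes_{\Og_K}K$ — by flat base change and \cite{LLR}, Proposition 1.3(b) it is identified with $H^1(C,\Og_C)\xrightarrow{\sim}\Lie\Pic^0_{C/K}$ — so $\ker\mu$ is torsion; as $\mathscr{N}$ is smooth over the discrete valuation ring $\Og_K$, the module $\Lie\mathscr{N}$ is torsion-free, hence $\mu$ kills the torsion, whence $H^0(\mathbf{L}(\mathscr{C}))=\ker\mu=H^1(\mathscr{C},\Og_{\mathscr{C}})_{\mathrm{tors}}$, and likewise for $\widetilde{\mathscr{C}}$. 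Next, (a)$\iff$(b): if (b) holds then, by (i), $\mathscr{N}\to\widetilde{\mathscr{N}}$ is a torsor under the $\Og_K$-smooth group $\mathscr{T}$, hence smooth; conversely, if it is smooth then $\mathscr{Q}:=\ker(\mathscr{N}\to\widetilde{\mathscr{N}})$ is a smooth separated $\Og_K$-group scheme with generic fibre $T$, with $\mathscr{Q}(\Og_K)=T(K)$ (from $\mathscr{N}(\Og_K)=\Pic^0_{C/K}(K)$ and $\widetilde{\mathscr{N}}(\Og_K)=E(K)$) and $\Phi_{\mathscr{Q}}$ finitely generated (compare the special-fibre sequence $0\to\mathscr{Q}_k\to\mathscr{N}_k\to\widetilde{\mathscr{N}}_k\to 0$ with the finitely generated $\Phi_{\mathscr{N}},\Phi_{\widetilde{\mathscr{N}}}$), so Proposition \ref{fingennérprop} identifies $\mathscr{Q}$ with $\mathscr{T}$.

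For (b)$\iff$(c), I would show both are equivalent to "$\hbar$ is an isomorphism'', equivalently (by the length equality of Lemma \ref{samelengthlemII}) to $\operatorname{coker}\hbar=0$, equivalently to $\ker\hbar=0$. On one side, combining the $H^0$-computation with the surjectivity of $h$ in (\ref{cohomseq}) and the fact that $\ker h$ is generically $\Lie T$ via $\hbar$ (so $(\ker h)_{\mathrm{tors}}=\ker\hbar$), the map $H^0(\psi^\ast)\colon H^1(\mathscr{C},\Og_{\mathscr{C}})_{\mathrm{tors}}\to H^1(\widetilde{\mathscr{C}},\Og_{\widetilde{\mathscr{C}}})_{\mathrm{tors}}$ is an isomorphism exactly when $\ker\hbar=0$. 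On the other side, since the short exact sequence of Lemma \ref{Rigexseqlem} has smooth kernel, the morphism $(P_{\mathscr{C}/\Og_K},Y)\to(P_{\widetilde{\mathscr{C}}/\Og_K},Y)$ is smooth; pushing this through the quotient by $V_Y^\times$, the separated quotient, and Lie algebras (exactly as in the proof of Theorem \ref{samelengththm}), and using the identification of $\mathscr{T}$ with $\Res_{\mathcal{A}/\Og_K}\NGm/\Res_{\mathcal{B}/\Og_K}\NGm$ (Proposition \ref{Otherpaperprop} on generic fibres shows its generic fibre is $\ker(\Pic^0_{C/K}\to\Pic^0_{\widetilde{C}/K})=T$, and the General background facts then show it is the Néron lft-model), one obtains a commutative ladder with $H^1$-row (\ref{cohomseq}) and $\Lie$-row $\Lie\mathscr{T}\to\Lie\mathscr{N}\to\Lie\widetilde{\mathscr{N}}$ and vertical maps $\hbar,\mu,\tilde\mu$; a diagram chase then identifies $\operatorname{coker}\hbar=0$ with the surjectivity of $\Lie\mathscr{N}\to\Lie\widetilde{\mathscr{N}}$, which (using the faithful flatness of (i) and the equality of fibre dimensions $\dim\mathscr{N}_k=\dim T+\dim\widetilde{\mathscr{N}}_k$) is equivalent to smoothness of $\ker(\mathscr{N}\to\widetilde{\mathscr{N}})$, i.e. to (a). The same ladder, together with $\ell(\ker\hbar)=\ell(\operatorname{coker}\hbar)$, shows that when $\hbar$ is an isomorphism the induced map $\operatorname{coker}\mu\to\operatorname{coker}\tilde\mu$ is an isomorphism too, so $H^1(\psi^\ast)$ is an isomorphism and $\psi^\ast$ is a quasi-isomorphism. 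The final assertion of (ii) is the $H^0$-computation.

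The step I expect to be the main obstacle is the construction and exactness analysis of the $\Lie$-row $\Lie\mathscr{T}\to\Lie\mathscr{N}\to\Lie\widetilde{\mathscr{N}}$ together with the diagram chase linking $\operatorname{coker}\hbar$ to the surjectivity of $\Lie\mathscr{N}\to\Lie\widetilde{\mathscr{N}}$: this requires faithfully reproducing the passage, in the proof of Theorem \ref{samelengththm}, from the rigidified Picard functors to the Néron models — through the quotient by $V_Y^\times$, the smoothenings of the relevant schematic closures, the separated quotient $P\mapsto P^{\mathrm{sep}}$, and the restriction to identity components — while keeping the map $\hbar$ of Lemma \ref{samelengthlemII} visible throughout, and then correctly bookkeeping the finite-length modules involved. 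Everything else is comparatively routine.
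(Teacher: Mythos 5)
Your part (i), the computation $H^0(\mathbf{L}(\mathscr{C}))=H^1(\mathscr{C},\Og_{\mathscr{C}})_{\mathrm{tors}}$, and the equivalence (a)$\iff$(b) are fine and close to the paper (the paper handles (a)$\Rightarrow$(b) by observing directly that the kernel is smooth and inherits the N\'eron mapping property, rather than via the finitely-generated-component-group criterion, but your variant works). The genuine gap is the pivot of your treatment of (b)$\iff$(c): the claim that (a), (b), (c) are each equivalent to ``$\hbar$ is an isomorphism''. That claim is false, and the direction you actually need --- $\hbar$ iso $\Rightarrow$ (a)/(c) --- is exactly the one that fails. Since $\hbar$ is generically an isomorphism and $\Lie\mathscr{T}$ is torsion-free, $\ker\hbar=(\ker h)_{\mathrm{tors}}$, so by the length equality of Lemma \ref{samelengthlemII} ``$\hbar$ iso'' is nothing more than the statement $(\ker h)_{\mathrm{tors}}=0$. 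In the paper's own example of Paragraph \ref{examplepar} one has $\Gamma(\widetilde{\mathscr{X}},\Og_{\widetilde{\mathscr{X}}})=\mathcal{B}=\Og_K$ and $\mathcal{A}=\Og_L$, so the sequence (\ref{cohomseq}) gives $\ker h\cong\Og_L/\Og_K$, which is torsion-free; hence $\hbar$ is an isomorphism there, while the sequence of N\'eron lft-models is \emph{not} exact and $\psi^\ast$ is \emph{not} a quasi-isomorphism. The underlying errors are in the two chases: from a ladder whose bottom row $\Lie\mathscr{T}\to\Lie\mathscr{N}\to\Lie\widetilde{\mathscr{N}}$ is only a complex (not right exact), vanishing of $\mathrm{coker}\,\hbar$ gives no control over $\mathrm{coker}(\Lie\mathscr{N}\to\Lie\widetilde{\mathscr{N}})$; and from $0\to\ker h\to H^1(\mathscr{C},\Og_{\mathscr{C}})\to H^1(\widetilde{\mathscr{C}},\Og_{\widetilde{\mathscr{C}}})\to 0$ with $\ker h$ torsion-free one cannot conclude that the induced map on torsion submodules is surjective (already $0\to\Og_K\overset{\pi}{\to}\Og_K\to k\to 0$ defeats this). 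Only half of your biconditional ``$H^0(\psi^\ast)$ iso $\iff$ $\ker\hbar=0$'' is true, namely the implication you do not need.

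The implications that do hold are the ones the paper proves, and they never use $\hbar$: for (b)$\Rightarrow$(c), smoothness of $\mathscr{N}\to\widetilde{\mathscr{N}}$ makes $H^1(\mathbf{L}(\mathscr{C}))\to H^1(\mathbf{L}(\widetilde{\mathscr{C}}))$ surjective, while $H^0(\mathbf{L}(\mathscr{C}))\to H^0(\mathbf{L}(\widetilde{\mathscr{C}}))$ is injective; Theorem \ref{samelengththm} says each complex has $H^0$ and $H^1$ of a common length ($t$, resp.\ $\widetilde{t}$), whence $t=\widetilde{t}$ and both maps are isomorphisms. For (c)$\Rightarrow$(a) one only needs the commutative square formed by $h$, $\lambda$, $\widetilde{\lambda}$ and $\Lie\mathscr{N}\to\Lie\widetilde{\mathscr{N}}$, together with surjectivity of $h$, which yields the exact sequence $H^1(\mathbf{L}(\mathscr{C}))\to H^1(\mathbf{L}(\widetilde{\mathscr{C}}))\to\mathrm{coker}(\Lie\mathscr{N}\to\Lie\widetilde{\mathscr{N}})\to 0$; a quasi-isomorphism then forces $\Lie\mathscr{N}\to\Lie\widetilde{\mathscr{N}}$ to be surjective, hence $\mathscr{N}\to\widetilde{\mathscr{N}}$ smooth. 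So the repair is to drop the reduction to $\hbar$ (whose isomorphy is a strictly weaker condition; in the paper Lemma \ref{samelengthlemII} is used only for the length bookkeeping in the proof of Chai's conjecture) and argue with Theorem \ref{samelengththm} directly.
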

\begin{proof}
The surjectivity in part (i) follows from the fact that $P_{\mathscr{C}} \to P_{\widetilde{\mathscr{C}}}$ is surjective (Proposition \ref{Otherpaperprop}) together with the fact that $\mathscr{N}$ and $\widetilde{\mathscr{N}}$ are quotients of $P_{\mathscr{C}}$ and $P_{\widetilde{\mathscr{C}}},$ respectively (Corollary \ref{Nérisomcor}). This also implies that the map $\mathscr{N} \to \widetilde{\mathscr{N}}$ is fibre-wise flat, so it is faithfully flat by the fibre-wise criterion of flatness. \\
(ii) (a)$\Rightarrow$(b): If $\mathscr{N} \to \widetilde{\mathscr{N}}$ is smooth, it is elementary to show that its kernel is smooth and satisfies the Néron mapping property. \\
(b)$\Rightarrow$(c): If the sequence is exact, the map $\mathscr{N} \to \widetilde{\mathscr{N}}$ is smooth. Hence the map $\Lie \mathscr{N} \to \Lie \widetilde{\mathscr{N}}$ is surjective, which means that the map $H^1(\mathbf{L}(\mathscr{C}))\to H^1(\mathbf{L}(\widetilde{\mathscr{C}}))$ is surjective as well. By Theorem \ref{samelengththm}, the two potentially non-trivial cohomology modules of $\mathbf{L}(\mathscr{C})$ and $\mathbf{L}(\widetilde{\mathscr{C}})$ have the same lengths. Let $t$ (resp. $\widetilde{t}$) be this common length of the cohomology modules of $\mathbf{L}(\mathscr{C})$ (resp. $\mathbf{L}(\widetilde{\mathscr{C}})$). We have just shown that $t\geq \widetilde{t}.$ However, it is easy to see that the map $H^0(\mathbf{L}(\mathscr{C}))\to H^0(\mathbf{L}(\widetilde{\mathscr{C}}))$ is injective, so that $t\leq \widetilde{t}.$ Hence $t=\widetilde{t},$ and the injectivity and surjectivity we have just seen proves that $\psi^\ast$ is a quasi-isomorphism. \\
(c)$\Rightarrow$(a): Let $\mathscr{K}$ be the kernel of the map $\mathscr{N}\to \widetilde{\mathscr{N}}$ and consider the commutative diagram 
$$\begin{CD}
0@>>> \Lie \mathscr{T} @>>> H^1(\mathscr{C}, \Og_{\mathscr{C}}) @>>> H^1(\widetilde{\mathscr{C}}, \Og_{\widetilde{\mathscr{C}}}) @>>> 0\\
&&@VVV@VVV@VVV\\
0@>>> \Lie \mathscr{K}@>>> \Lie \mathscr{N} @>>> \Lie \widetilde{\mathscr{N}}.
\end{CD}$$
We obtain the exact sequence
$$H^1(\mathbf{L}(\mathscr{C})) \to H^1(\mathbf{L}(\widetilde{\mathscr{C}})) \to \mathrm{coker} (\Lie \mathscr{N} \to \Lie \widetilde{\mathscr{N}}) \to 0.$$
If $\psi^\ast$ is a quasi-isomorphism, the map $\Lie \mathscr{N} \to \Lie \widetilde{\mathscr{N}}$ must be surjective, so $ \mathscr{N} \to  \widetilde{\mathscr{N}}$ is smooth (this can be reduced to the finite type case, which follows from \cite[Proposition 1.1(e)]{LLR}). 
\end{proof}\\
This proposition can be used to explain a phenomenon already observed in \cite{OvI, Ov}: 
\begin{proposition}
With the same notation as in the previous proposition, if $\widetilde{\mathscr{C}}$ is cohomologically flat in dimension zero, then the sequence $0 \to \mathscr{T} \to \mathscr{N} \to \widetilde{\mathscr{N}} \to 0$ is exact. 
\end{proposition}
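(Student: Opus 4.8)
The plan is to deduce the exactness of $0 \to \mathscr{T} \to \mathscr{N} \to \widetilde{\mathscr{N}} \to 0$ from Proposition \ref{qiprop}. By that Proposition it suffices to verify condition (c), i.e. that $\psi^\ast\colon \mathbf{L}(\mathscr{C}) \to \mathbf{L}(\widetilde{\mathscr{C}})$ is a quasi-isomorphism; I would in fact establish the stronger assertion that both complexes $\mathbf{L}(\mathscr{C})$ and $\mathbf{L}(\widetilde{\mathscr{C}})$ are acyclic, after which $\psi^\ast$ is automatically a quasi-isomorphism and the equivalence of (a), (b), (c) in Proposition \ref{qiprop}(ii) yields the claim.

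First I would unwind the hypothesis. The morphism $\widetilde{f}\colon \widetilde{\mathscr{C}} \to \Spec \Og_K$ is proper and flat with fibres of dimension $\le 1$, so $R\widetilde{f}_\ast \Og_{\widetilde{\mathscr{C}}}$ is a perfect complex of $\Og_K$-modules of amplitude $[0,1]$. Over the discrete valuation ring $\Og_K$, cohomological flatness in dimension zero amounts to the base change map $H^0(\widetilde{\mathscr{C}}, \Og_{\widetilde{\mathscr{C}}}) \otimes_{\Og_K} k \to H^0(\widetilde{\mathscr{C}}_k, \Og_{\widetilde{\mathscr{C}}_k})$ being an isomorphism; via the universal-coefficient exact sequence $0 \to H^0(\widetilde{\mathscr{C}}, \Og_{\widetilde{\mathscr{C}}}) \otimes_{\Og_K} k \to H^0(\widetilde{\mathscr{C}}_k, \Og_{\widetilde{\mathscr{C}}_k}) \to \mathrm{Tor}_1^{\Og_K}(H^1(\widetilde{\mathscr{C}}, \Og_{\widetilde{\mathscr{C}}}), k) \to 0$ this is equivalent to $H^1(\widetilde{\mathscr{C}}, \Og_{\widetilde{\mathscr{C}}})$ being a free (equivalently torsion-free) $\Og_K$-module. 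Hence $H^0(\mathbf{L}(\widetilde{\mathscr{C}})) = H^1(\widetilde{\mathscr{C}}, \Og_{\widetilde{\mathscr{C}}})_{\mathrm{tors}} = 0$ by the final clause of Proposition \ref{qiprop}.

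Next, Corollary \ref{lengthscor} (which is where Theorem \ref{samelengththm} enters) shows that $H^0(\mathbf{L}(\widetilde{\mathscr{C}}))$ and $H^1(\mathbf{L}(\widetilde{\mathscr{C}}))$ have the same $\Og_K$-length; since the former vanishes, so does the latter, and $\mathbf{L}(\widetilde{\mathscr{C}})$ is acyclic. The map $H^0(\mathbf{L}(\mathscr{C})) \to H^0(\mathbf{L}(\widetilde{\mathscr{C}}))$ is injective — this is recorded inside the proof of Proposition \ref{qiprop} — so $H^0(\mathbf{L}(\mathscr{C})) = 0$ as well, and a second application of Corollary \ref{lengthscor} gives $H^1(\mathbf{L}(\mathscr{C})) = 0$. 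Thus $\psi^\ast$ is a morphism of acyclic complexes, hence a quasi-isomorphism, and Proposition \ref{qiprop}(ii) finishes the argument. The ancillary hypotheses are all in place: $\Pic^0_{C/K}$ is semiabelian and so admits a N\'eron lft-model, $\Pic^0_{\widetilde{C}/K}$ is an Abelian variety by Proposition \ref{seminormalpropII}, and $\psi\colon\widetilde{\mathscr{C}}\to\mathscr{C}$ is the strong model pair with $\mathscr{C}$ semi-factorial fixed in the statement.

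The only step that is not purely formal is the first, namely the identification of cohomological flatness in dimension zero with the freeness of $H^1(\widetilde{\mathscr{C}}, \Og_{\widetilde{\mathscr{C}}})$ over $\Og_K$; this is a classical consequence of cohomology and base change over a trait, and once it is in hand everything else is a diagram chase through the length-preservation results already proved.
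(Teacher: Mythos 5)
Your proof is correct and follows essentially the same route as the paper: both arguments show that $\mathbf{L}(\mathscr{C})$ and $\mathbf{L}(\widetilde{\mathscr{C}})$ are acyclic (vanishing of $H^0$ via torsion-freeness of $H^1(\cdot,\Og)$, then vanishing of $H^1$ via the length equality of Theorem \ref{samelengththm}/Corollary \ref{lengthscor}) and then invoke Proposition \ref{qiprop}. The only minor difference is how you kill $H^0(\mathbf{L}(\mathscr{C}))$: the paper transfers cohomological flatness from $\widetilde{\mathscr{C}}$ to $\mathscr{C}$ using the push-out description, whereas you use the injectivity of $H^0(\mathbf{L}(\mathscr{C}))\to H^0(\mathbf{L}(\widetilde{\mathscr{C}}))$ already recorded in the proof of Proposition \ref{qiprop} -- both rest on facts the paper itself establishes, so this is an inessential variation.
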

\begin{proof}
By the discussion following the proof of Proposition \ref{modpairexprop}, $\mathscr{C}$ is cohomologically flat in dimension zero as well. In particular, $H^0(\mathbf{L}(\mathscr{C}))=H^0(\mathbf{L}(\widetilde{\mathscr{C}}))=0.$ Hence, by Theorem \ref{samelengththm}, $H^1(\mathbf{L}(\mathscr{C}))=H^1(\mathbf{L}(\widetilde{\mathscr{C}}))=0.$ This shows $\mathbf{L}(\mathscr{C})\cong 0 \cong \mathbf{L}(\widetilde{\mathscr{C}}).$ Therefore the claim follows from the previous proposition.
\end{proof}\\
$\mathbf{Remark}.$ While it is clear that $\mathbf{L}(\mathscr{C})$ only depends on $C,$ there does not seem to be a general method to calculate the cohomology of this complex in terms of $C.$ It would be very interesting to have such a general procedure.
\subsubsection{An example} \label{examplepar}
We shall now construct an explicit example of a proper geometrically integral curve $X$ over $K$ such that the induced sequence of Néron models is not exact. This will answer the question posed in \cite[Remark after the proof of Theorem 3.10]{Ov}, negatively. In fact, such examples exist in any positive residue characteristic. Let $p>0$ be a prime number and assume that $\Og_K$ has residue characteristic $p.$ Let $\mathscr{E}$ be an elliptic curve over $\Og_K$ with ordinary generic fibre and supersingular special fibre. Let $\widetilde{X}$ be a torsor for $\mathscr{E}_K$ such that the associated class in $H^1(K, \mathscr{E}_K)$ has order $p.$ Such torsors exist by \cite[Theorem 6.6 and Corollary 6.7]{LLR}. Let $\widetilde{\mathscr{X}}\to \Spec \Og_K$ be the minimal proper regular model of $\widetilde{X}.$ Then we know the following:
\begin{itemize}
\item the special fibre of $\widetilde{\mathscr{X}}\to \Spec \Og_K$ has multiplicity $p$ \cite[Theorem 6.6]{LLR},
 \item the map $\widetilde{\mathscr{X}}\to \Spec \Og_K$ is not cohomologically flat in dimension zero \cite[Théorème 9.4.1(ii)]{RaynaudPic}
\item the group scheme $\mathscr{E}$ acts on $\widetilde{\mathscr{X}}$ (extending the torsor structure on the generic fibre), and the reduced special fibre of $\widetilde{\mathscr{X}} \to \Spec \Og_K$ is a (non-principal) homogeneous space for $\mathscr{E}_k;$ in particular, it is an elliptic curve \cite[p. 72]{RaynaudPic}.
\end{itemize}
Let $\mathscr{J}\subseteq \Og_{\widetilde{\mathscr{X}}}$ be the ideal which cuts out $(\widetilde{\mathscr{X}}_k)_{\mathrm{red}}.$ Let $f\in \Gamma(\widetilde{\mathscr{X}}_k, \Og_{\widetilde{\mathscr{X}}_k})$ be a non-constant function. We identify $\Og_{\widetilde{\mathscr{X}}_k}$ with $\Og_{\widetilde{\mathscr{X}}}/\mathscr{J}^p.$ After subtracting a constant function from $f,$ we may assume that $f\in \mathscr{J}/\mathscr{J}^p.$ Choose a regular function $\widetilde{f}$ on some $\Og_K$-dense open subset $U\subseteq \widetilde{\mathscr{X}}$ which restricts to $f.$  Let $i\in \{1,..., p-1\}$ be maximal such that $\widetilde{f}\in \mathscr{J}^  i.$ By shrinking $U$ if necessary, we may assume that $\widetilde{f}$ generates $\mathscr{J}^  i \mid_U.$ 
Because $\widetilde{X}$ is smooth over $K,$ there exists a closed point $x$ on $\widetilde{X}$ such that $L:=\kappa(x)$ is separable over $K.$ Write $[L:K]=pn$ for some $n\in \N.$ Let $c\colon \Spec \Og_L\to \widetilde{\mathscr{X}}$ be the induced map; its scheme-theoretic image $\overline{x}$ is the Zariski closure of $x$ in $\widetilde{\mathscr{X}}.$ Note that, in particular, we have $\langle \overline{x}, (\widetilde{\mathscr{X}}_k)_{\mathrm{red}}\rangle =n,$ where $\langle -,- \rangle$ denotes the intersection product of divisors on $\widetilde{\mathscr{X}}.$ 
Now let $y$ be the closed point on the special fibre of $\widetilde{\mathscr{X}} \to \Spec \Og_K$ which also lies on $\overline{x}.$ Because $\mathscr{E}_k$ acts transitively on $(\widetilde{\mathscr{X}}_k)_{\mathrm{red}},$ there is a section $\sigma\in \mathscr{E}(\Og_K)$ such that $\sigma_k\cdot y \in U_k.$ In particular, we may assume that the map $c\colon \Spec \Og_L \to \widetilde{\mathscr{X}}$ factors through $U.$ 
Hence we know that $c^\ast\widetilde{f}=\epsilon \pi_L^ {in}$ for some $\epsilon \in \Og_L^\times.$ But because $in<[L:K],$ this does not restrict to a constant function in $\Og_L\otimes_{\Og_K}k.$ Now we replace $\widetilde{\mathscr{X}}$ by the scheme obtained by blowing up $\widetilde{\mathscr{X}}$ repeatedly in closed points on the special fibre until the map $c\colon \Spec \Og_L \to \widetilde{\mathscr{X}}$ becomes a closed immersion and its image $D$ intersects $(\widetilde{\mathscr{X}}_k)_{\mathrm{red}}$ transversely. Then we know that the map 
$$\psi\colon \widetilde{\mathscr{X}} \to \mathscr{X}:= \widetilde{\mathscr{X}} \cup_{\Spec\Og_L} \Spec \Og_K$$
is a strong model pair of the curve $X:=\widetilde{X}\cup_{\Spec L} \Spec K$ and that $\mathscr{X}$ is semi-factorial\footnote{The map is a strong model pair by construction. That $\mathscr{X}$ is semi-factorial follows as in  \cite[Theorem 4.3]{Ov}. Indeed, in order to guarantee that $\mathscr{X}$ is semi-factorial, we all we must ensure is that each irreducible component of $(\widetilde{\mathscr{X}}_k)_{\mathrm{red}}$ intersects at most one connected component of $D.$ But this is trivial in our situation since $D$ is connected.}. Taking the cohomology of the exact sequence $0 \to \Og_{\mathscr{X}} \overset{\cdot \pi_K}\to\Og_{\mathscr{X}} \to \Og_{\mathscr{X}_k} \to 0$ shows that $$\Gamma(\mathscr{X}_k, \Og_{\mathscr{X}_k})/k =H^1(\mathscr{X}, \Og_{\mathscr{X}})[\pi_K];$$ the analogous statement holds for $\widetilde{\mathscr{X}}$ for the same reason. Since the push-out used to construct $\mathscr{X}$ commutes with arbitrary base change, we see that the map $\Gamma(\mathscr{X}_k, \Og_{\mathscr{X}_k}) \to \Gamma(\widetilde{\mathscr{X}}_k, \Og_{\widetilde{\mathscr{X}}_k})$ is not an isomorphism, which implies that $H^1(\mathscr{X}_k, \Og_{\mathscr{X}_k})_{\mathrm{tors}} \to H^1(\widetilde{\mathscr{X}}_k, \Og_{\widetilde{\mathscr{X}}_k})_{\mathrm{tors}} $ is not an isomorphism either. In particular, the map $\mathbf{L}(\mathscr{X}) \to \mathbf{L}(\widetilde{\mathscr{X}})$ is not a quasi-isomorphism and the sequence of Néron models induced by the exact sequence $$0\to \Res_{L/K}\Gm/\Gm \to \Pic^0_{X/K}\to \Pic^0_{\widetilde{X}/K} \to 0$$ is not exact by Proposition \ref{qiprop}.
\section{Existence of Néron (lft-)models}
\subsection{The structure of Jacobians in degree of imperfection 1}
Let $\kappa$ be an arbitrary field of characteristic $p>0.$ Throughout this section, $C$ will denote a proper curve over $\kappa;$ if $C$ is reduced, we let $\widetilde{C}$ be its normalisation. Let $G:=\Pic^0_{C/K},$ let $\mathscr{R}_{us, \kappa}(G)$ denote the maximal smooth connected split unipotent closed subgroup of $G$ (cf. \cite[p. 63]{CGP}), and let $\mathrm{uni}(G)$ denote the maximal unirational subgroup of $G$ over $K$ (cf. \cite[p. 310]{BLR}). We shall begin by studying the structure of $\Pic^0_{C/\kappa}.$ The results and their proofs are similar to the corresponding ones in \cite{Ov}, so we shall only indicate the necessary changes.
\begin{proposition}
Suppose that $\delta(\kappa) \leq 1$ and that $C$ is reduced. Then $\mathrm{uni}(\Pic^0_{\widetilde{C}/\kappa})=0.$
\end{proposition}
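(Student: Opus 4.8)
The plan is to reduce, by means of Proposition \ref{Steinredprop}, to the case of a geometrically integral curve, where the assertion is available by the argument of \cite{Ov}. Since $\Pic^0_{\widetilde{C}/\kappa}$ is the product of the $\Pic^0$ of the connected components of $\widetilde{C}$ and the formation of $\mathrm{uni}(-)$ commutes with finite products, I may assume $\widetilde{C}$ connected; set $\kappa':=\Gamma(\widetilde{C},\Og_{\widetilde{C}})$, a finite field extension of $\kappa$, so that $\delta(\kappa')=\delta(\kappa)\le 1$ by Lemma \ref{deltafinsepeqlem} and $\Pic^0_{\widetilde{C}/\kappa}=\Res_{\kappa'/\kappa}\Pic^0_{\widetilde{C}/\kappa'}$. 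Now $\widetilde{C}\to\Spec\kappa$ is proper, has normal fibres (its unique fibre being the regular scheme $\widetilde{C}$), and is cohomologically flat in dimension zero (this being automatic over a field), so Proposition \ref{Steinredprop} shows it is Stein reduced; that is, $\widetilde{C}$ is geometrically reduced over $\kappa'$. Being in addition geometrically connected over $\kappa'$ (by Stein factorisation), integral, and normal with $\kappa'$ algebraically closed in its function field (as $\widetilde{C}$ is normal and proper), the curve $\widetilde{C}$ is geometrically integral over $\kappa'$.

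It then remains to prove that $\mathrm{uni}(\Res_{\kappa'/\kappa}G')=0$, where $G':=\Pic^0_{\widetilde{C}/\kappa'}$ is the Jacobian of a regular, geometrically integral, proper curve over the field $\kappa'$ with $\delta(\kappa')\le 1$. I would first treat $G'$ itself as in \cite{Ov}: the base change $\widetilde{C}_{\overline{\kappa'}}$ is unibranch, being universally homeomorphic to the regular scheme $\widetilde{C}\times_{\kappa'}\kappa'^{\mathrm{sep}}$, so the maximal affine subgroup of $\Pic^0_{\widetilde{C}_{\overline{\kappa'}}/\overline{\kappa'}}$ is unipotent; hence $G'$ contains no torus, and --- this being the substantive input, where the bound $\delta(\kappa')\le 1$ is used --- no copy of $\Ga$ either, so that $G'$ admits no non-trivial unirational subgroup (such a subgroup is affine, and a unirational affine group over a field of degree of imperfection at most one contains $\Gm$ or $\Ga$). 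Finally, a non-trivial unirational subgroup $U$ of $\Res_{\kappa'/\kappa}G'$ would, by the adjunction defining Weil restriction, correspond to a non-constant homomorphism $U_{\kappa'}\to G'$ whose image is a non-trivial unirational subgroup of $G'$ (quotients and base changes of unirational groups being unirational), contradicting the previous step; hence $\mathrm{uni}(\Pic^0_{\widetilde{C}/\kappa})=0$.

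The main obstacle is the exclusion of $\Ga$ from $G'$, equivalently the statement that the (in general non-trivial) unipotent part of the Jacobian of a regular proper curve over a field of degree of imperfection at most one is wound; this is exactly the content of the corresponding result in \cite{Ov}, and --- apart from it --- the proof consists of the two reductions above: from regularity to geometric integrality over $\kappa'$ by Stein-reducedness, and down the possibly inseparable extension $\kappa'/\kappa$ by Weil restriction.
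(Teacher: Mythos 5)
Your proposal is correct and follows essentially the same route as the paper: the identification $\Pic^0_{\widetilde{C}/\kappa}=\Res_{\Gamma(\widetilde{C},\Og_{\widetilde{C}})/\kappa}\Pic^0_{\widetilde{C}/\Gamma(\widetilde{C},\Og_{\widetilde{C}})}$, Stein-reducedness (Proposition \ref{Steinredprop}) to get geometric reducedness over the Stein factor, the result of \cite{Ov} (Proposition 2.32) for the vanishing of $\mathrm{uni}$ there, and descent through the Weil restriction via base change of unirational groups. The only cosmetic point is that your parenthetical ``contains $\Gm$ or $\Ga$'' should read ``contains a nontrivial torus or $\Ga$'' (a maximal torus need not be split), which is harmless since you have already excluded all tori.
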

\begin{proof}
We have $$\Pic^0_{\widetilde{C}/\kappa}=\Res_{\Gamma(\widetilde{C}, \Og_{\widetilde{C}})/\kappa}\Pic^0_{\widetilde{C}/\Gamma(\widetilde{C}, \Og_{\widetilde{C}})}.$$ Because $\widetilde{C}$ is normal and $\delta(\kappa)\leq1,$ $\widetilde{C}$ is geometrically reduced over $\Gamma(\widetilde{C}, \Og_{\widetilde{C}})$ (Proposition \ref{Steinredprop}). Because the base change of a unirational algebraic group is unirational, the claim now follows from \cite[Proposition 2.32]{Ov}. 
\end{proof}
\begin{theorem}
Suppose $C$ is reduced and let $\widetilde{C} \overset{\widetilde{\varsigma}}{\to} C^{\mathrm{sn}} \overset{{\varsigma}}{\to} C$ be the canonical factorisation\footnote{See \cite[Lemma 2.14 and Theorem 2.24(i)]{Ov}.} of the normalisation map $\nu,$ where $C^{\mathrm{sn}}$ is the seminormalisation of $C.$ Then we have a filtration
$$0 \subseteq \ker {\varsigma}^\ast \subseteq \ker \nu^\ast \subseteq \Pic^0_{C/\kappa},$$ 
where $\ker {\varsigma}^\ast$ is split unipotent and $\ker \nu^\ast$ is unirational. If $\delta(\kappa) \leq 1,$ $\ker {\varsigma}^\ast = \mathscr{R}_{us, \kappa}(\Pic^0_{C/\kappa})$ and $\ker \nu^\ast = \mathrm{uni}(\Pic^0_{C/\kappa}).$ \label{Structurethm}
\end{theorem}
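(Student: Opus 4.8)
We follow the argument of the corresponding theorem in \cite{Ov}; the only genuinely new ingredients are the systematic use of Weil restrictions of $\Gm$ and $\Ga$ along finite, possibly inseparable, field extensions (in place of the bare $\Gm$'s and $\Ga$'s that suffice in the geometrically reduced case) and the appeal to Proposition \ref{Steinredprop} at the two points where $\widetilde C$ need not be geometrically reduced over $\kappa$. We may assume $C$ connected, so that $\Gamma(C,\Og_C)$ is a field, and recall that $\widetilde C$, being the normalisation of a reduced curve, is regular, and that $\nu=\varsigma\circ\widetilde\varsigma$, whence $\ker\varsigma^\ast\subseteq\ker\nu^\ast$.

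By \cite{Ov}, Theorem 2.24(i) the seminormalisation morphism $\varsigma\colon C^{\mathrm{sn}}\to C$ factors as $C^{\mathrm{sn}}=C_1\to\cdots\to C_n=C$, where $C_{i+1}$ is the push-out of $C_i\hookleftarrow\Spec\kappa(x_{i+1})[\epsilon]/\langle\epsilon^2\rangle\to\Spec\kappa(x_{i+1})$, and \cite{Ov}, Proposition 2.30 yields for each step an exact sequence
$$0\to\Res_{\Gamma(C_{i+1},\Og_{C_{i+1}})/\kappa}\Gm\to\Res_{\Gamma(C_i,\Og_{C_i})/\kappa}\Gm\to\Res_{\kappa(x_{i+1})/\kappa}\Ga\to\Pic^0_{C_i/\kappa}\to\Pic^0_{C_{i+1}/\kappa}\to 0.$$
Hence each transition map is surjective with kernel a quotient of the split unipotent group $\Res_{\kappa(x_{i+1})/\kappa}\Ga$, so smooth, connected and split unipotent (\cite{CGP}, Theorem B.3.4); composing, $\varsigma^\ast$ is surjective and $\ker\varsigma^\ast$ is smooth, connected and split unipotent, this class being stable under extension and quotient. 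Running the analogous computation for the normalisation of the seminormal curve $C^{\mathrm{sn}}$, where the conductor terms are Weil restrictions of $\Gm$ (reflecting the transverse multi-branch singularities of $C^{\mathrm{sn}}$) rather than of $\Ga$, one finds that $\widetilde\varsigma^\ast$ is surjective and that $\ker\widetilde\varsigma^\ast$ is a quotient of a product of Weil restrictions of $\Gm$ along finite field extensions, hence unirational. Since $\varsigma^\ast$ is surjective we obtain a short exact sequence $0\to\ker\varsigma^\ast\to\ker\nu^\ast\to\ker\widetilde\varsigma^\ast\to 0$, so $\ker\nu^\ast$, being an extension of the unirational group $\ker\widetilde\varsigma^\ast$ by the split unipotent group $\ker\varsigma^\ast$, is itself unirational (a split unipotent torsor over a rational affine base is trivial). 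This gives the filtration and the asserted structure of $\ker\varsigma^\ast$ and $\ker\nu^\ast$.

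Now suppose $\delta(\kappa)\le 1$. By Proposition \ref{Steinredprop}, $\widetilde C$ is geometrically reduced over $\Gamma(\widetilde C,\Og_{\widetilde C})$, so the Proposition preceding this theorem gives $\mathrm{uni}(\Pic^0_{\widetilde C/\kappa})=0$; in particular $\Pic^0_{\widetilde C/\kappa}$ has no nonzero smooth connected split unipotent subgroup. We have $\ker\nu^\ast\subseteq\mathrm{uni}(\Pic^0_{C/\kappa})$, and conversely $\mathrm{uni}(\Pic^0_{C/\kappa})/\ker\nu^\ast$ is a unirational subgroup of $\Pic^0_{C/\kappa}/\ker\nu^\ast\cong\Pic^0_{\widetilde C/\kappa}$ (using surjectivity of $\nu^\ast$), hence zero, so $\ker\nu^\ast=\mathrm{uni}(\Pic^0_{C/\kappa})$. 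Likewise $\ker\varsigma^\ast\subseteq\mathscr{R}_{us,\kappa}(\Pic^0_{C/\kappa})$, and $\mathscr{R}_{us,\kappa}(\Pic^0_{C/\kappa})/\ker\varsigma^\ast$ is a smooth connected split unipotent subgroup of $\Pic^0_{C/\kappa}/\ker\varsigma^\ast\cong\Pic^0_{C^{\mathrm{sn}}/\kappa}$; but $\Pic^0_{C^{\mathrm{sn}}/\kappa}$ is an extension of $\Pic^0_{\widetilde C/\kappa}$ by $\ker\widetilde\varsigma^\ast$, and neither term contains a nonzero smooth connected split unipotent subgroup — the former by the above, the latter because a quotient of a product of Weil restrictions of $\Gm$ contains no copy of $\Ga$ (as $\Hom(\Ga,\Res_{F/\kappa}\Gm)=0$ and smooth connected subgroups of wound unipotent groups are wound, by the structure theory of \cite{CGP}) — so this subgroup is zero and $\ker\varsigma^\ast=\mathscr{R}_{us,\kappa}(\Pic^0_{C/\kappa})$.

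The main obstacle is the last two inclusions: one must exclude wound unipotent subgroups both in $\Pic^0_{\widetilde C/\kappa}$ and in the conductor group $\ker\widetilde\varsigma^\ast$, phenomena absent from the geometrically reduced situation of \cite{Ov}. Excluding them in $\Pic^0_{\widetilde C/\kappa}$ is exactly where the hypothesis $\delta(\kappa)\le 1$ enters, through the geometric-reducedness criterion of Proposition \ref{Steinredprop}; for $\ker\widetilde\varsigma^\ast$ one must control the quotients of Weil restrictions of $\Gm$, which is the technical heart of this part of the argument and rests on \cite{CGP}.
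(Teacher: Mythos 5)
Your general-$\kappa$ part (the filtration, the split unipotence of $\ker\varsigma^\ast$, the unirationality of $\ker\nu^\ast$) and your identification of $\ker\nu^\ast=\mathrm{uni}(\Pic^0_{C/\kappa})$ when $\delta(\kappa)\leq 1$ run parallel to the paper's argument and are essentially fine. The gap is in the last equality, $\ker\varsigma^\ast=\mathscr{R}_{us,\kappa}(\Pic^0_{C/\kappa})$: everything there reduces, exactly as you say, to showing that $\ker\widetilde\varsigma^\ast$ contains no copy of $\Ga$, i.e. $\Hom_\kappa(\Ga,\ker\widetilde\varsigma^\ast)=0$ --- and the justification you give is not a proof. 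The group $\ker\widetilde\varsigma^\ast$ is built from \emph{quotients} of the form $\Res_{A/\kappa}\Gm/(\Res_{B/\kappa}\Gm\cdot\Res_{L/\kappa}\Gm)$, and the fact that $\Hom_\kappa(\Ga,\Res_{F/\kappa}\Gm)=0$ says nothing about such a quotient, since a homomorphism $\Ga\to Q$ from $\Ga$ into a quotient need not lift to the Weil restriction upstairs; likewise ``smooth connected subgroups of wound unipotent groups are wound'' is misapplied: the group in question is a quotient, not a subgroup, it is not shown to be unipotent (let alone wound), and woundness is in any case not inherited by quotients. So the one statement you flag as ``the technical heart'' is precisely the statement you have not proved, and it is not a formal consequence of the structure theory in \cite{CGP}.

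The paper's proof supplies exactly this missing step, and it is also where the hypothesis $\delta(\kappa)\leq 1$ enters a second time (your sketch uses it only through Proposition \ref{Steinredprop}). One first reduces to $\kappa$ separably closed (\cite{CGP}, Corollary B.3.5, \cite{Ov}, Lemma 2.25, and Lemma \ref{deltafinsepeqlem}); then $\delta(\kappa)\leq 1$ forces every finite extension of $\kappa$ to be of the form $\kappa^{1/p^n}$, so the two subalgebras $B$ and $L$ occurring in each piece are comparable and the piece collapses to $\Res_{A/\kappa}\Gm/\Res_{M/\kappa}\Gm$ for a single finite reduced subalgebra $M\subseteq A$ (or to $\Res_{F/\kappa}\Gm$ itself). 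For such a quotient one shows $\Hom_\kappa(\Ga,\Res_{A/\kappa}\Gm/\Res_{M/\kappa}\Gm)=0$ by noting that a homomorphism $\tau$ yields a class in $\Ext^1_\kappa(\Ga,\Res_{M/\kappa}\Gm)=\Ext^1_M(\Ga,\Gm)=0$ (Weil-restriction spectral sequence and \cite{SGA3}, Expos\'e XVII, Th\'eor\`eme 6.1.1), so $\tau$ lifts to a map $\Ga\to\Res_{A/\kappa}\Gm$, which vanishes because $\Hom_A(\Ga,\Gm)=0$ over the finite product of fields $A$. Without the reduction to comparable subalgebras (hence without using $\delta(\kappa)\leq 1$ here), your blanket claim that ``a quotient of a product of Weil restrictions of $\Gm$ contains no copy of $\Ga$'' is unsubstantiated; to repair your argument you should insert the separably-closed reduction, the collapse of $\Res_{B/\kappa}\Gm\cdot\Res_{L/\kappa}\Gm$ to $\Res_{M/\kappa}\Gm$, and the $\Ext^1$-vanishing lifting argument above.
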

\begin{proof}
That $\ker \nu^\ast$ is unirational (resp. equals $\mathrm{uni}(\Pic^0_{C/\kappa})$ if $\delta(\kappa)\leq1$) can be shown as in the proof of \cite[Theorem 2.33]{Ov}. For the first claim (resp. equality), we may assume without loss of generality that $\kappa$ is separably closed (\cite[Corollary B.3.5]{CGP} together with \cite[Lemma 2.25]{Ov} and Lemma \ref{deltafinsepeqlem} above). The first claim follows as in the proof of \cite[Theorem 2.33]{Ov}. If $\delta(\kappa)\leq1,$ we can again argue as in \it loc. cit. \rm as soon as we can show that $\ker \widetilde{\varsigma}^\ast$ contains no closed subgroup isomorphic to $\Ga.$ By \cite[proof of Theorem 2.24(ii)]{Ov} and \cite[Proposition 2.30]{Ov}, we see that $\ker \widetilde{\varsigma}^\ast$ is a repeated extension of algebraic $\kappa$-groups of the form $\Res_{A/\kappa}\Gm/(\Res_{B/\kappa}\Gm \cdot \Res_{L/\kappa} \Gm),$ where $A$ is either a field or the product of a finite extension of $\kappa$ with itself, and $B$ and $L$ are subalgebras of $A.$ In the first case, both $B$ and $L$ are fields. Since $\delta(\kappa)\leq1$ and $\kappa$ is separably closed, each finite extension of $\kappa$ is isomorphic to $\kappa^{1/p^n}$ for some $n.$ Hence either $L\subseteq B$ or $B\subseteq L$ in a canonical way. Denoting the larger field by $M,$ we see that the quotient above is, in fact, isomorphic to $\Res_{A/\kappa}\Gm/\Res_{M/\kappa}\Gm.$ In the second case, we write $A=F\times F $ for a field extension $F\supseteq \kappa.$ The argument from \it loc. cit. \rm shows moreover that we can take $B=F$ with the map $B\to A$ being the diagonal map. Then $L$ is either a field of the product of two field extensions $L_1$ and $L_2$ of $\kappa.$ In the first case, the quotient above is isomorphic to $\Res_{F/\kappa}\Gm;$ in the second case it is isomorphic to $\Res_{F/\kappa}\Gm/\Res_{M/\kappa}\Gm,$ where $M$ is the larger one of $L_1$ and $L_2.$ Clearly, we have $\Hom_{\kappa}(\Ga, \Res_{F/\kappa} \Gm)=0.$ Hence all we need to show is that, if $M\subseteq A$ are finite reduced $\kappa$-algebras, we have $$\Hom_{\kappa}(\Ga, \Res_{A/\kappa}\Gm/\Res_{M/\kappa}\Gm)=0.$$ Any element $\tau$ of this group gives an element $\sigma\in\Ext^1_{\kappa}(\Ga, \Res_{M/\kappa}\Gm).$ However, since $M$ is finite over $\kappa,$ the Grothendieck spectral sequence shows that 
$$\Ext^1_{\kappa}(\Ga, \Res_{M/\kappa}\Gm)=\Ext^1_{M}(\Ga, \Gm)=0$$ \cite[Exposé XVII, Théorème 6.1.1 A) ii)]{SGA3}. Hence $\sigma=0$ and $\tau$ lifts to a map $\Ga \to \Gm$ over $A,$ which must vanish since $A$ is a finite product of fields. 
\end{proof}
\begin{corollary}
Suppose $C$ is reduced. If $\mathrm{uni}(\Pic^0_{C/\kappa})=0,$ then the map $\Pic^0_{C/\kappa} \to \Pic^0_{\widetilde{C}/\kappa}$ is an isomorphism. If $\mathscr{R}_{us, \kappa}(\Pic^0_{C/\kappa})=0,$ then $C$ is seminormal. If $\delta(\kappa)\leq 1,$ the converse of both statements holds as well. \label{structurecor}
\end{corollary}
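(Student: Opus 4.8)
The plan is to read all four statements off the filtration $0\subseteq\ker\varsigma^\ast\subseteq\ker\nu^\ast\subseteq\Pic^0_{C/\kappa}$ of Theorem \ref{Structurethm} (notation as there: $\nu\colon\widetilde C\to C$ the normalisation, $\varsigma\colon C^{\mathrm{sn}}\to C$ the seminormalisation), using that $\mathrm{uni}(-)$ and $\mathscr{R}_{us,\kappa}(-)$ are \emph{maximal} among, respectively, unirational subgroups and smooth connected split unipotent closed subgroups, together with the surjectivity of pullback on Picard functors. The single non-formal ingredient I would isolate first is the claim that, for reduced $C$, one has $\ker\varsigma^\ast=0$ if and only if $C$ is seminormal. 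The implication $\Leftarrow$ is trivial, since then $\varsigma$ is an isomorphism. For $\Rightarrow$ I would argue contrapositively, along the lines of the proof of Proposition \ref{seminormalpropII}: if $C$ is not seminormal there is a non-trivial factorisation $C^{\mathrm{sn}}=C_1\to\cdots\to C_n=C$ into elementary modifications, and for at least one index the exact sequence of \cite{Ov}, Proposition 2.30, together with the fact that a torus admits no non-trivial homomorphism into a unipotent group, identifies $\ker(\Pic^0_{C_i/\kappa}\to\Pic^0_{C_{i+1}/\kappa})$ with a non-zero quotient of some $\Res_{\kappa(x_{i+1})/\kappa}\Ga$; tracing this through the filtration of Theorem \ref{Structurethm} forces $\ker\varsigma^\ast\neq0$.

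Granting this, the first assertion is quick. If $\mathrm{uni}(\Pic^0_{C/\kappa})=0$ then, by Theorem \ref{Structurethm}, $\ker\nu^\ast$ is unirational, hence contained in the maximal unirational subgroup, which is $0$; thus $\nu^\ast$ has trivial kernel, so it is a monomorphism of finite-type group schemes over a field, i.e. a closed immersion. On the other hand $\nu^\ast\colon\Pic^0_{C/\kappa}\to\Pic^0_{\widetilde C/\kappa}$ is surjective by \cite{Ov}, Proposition 2.30 (as in the proof of Proposition \ref{seminormalpropII}), hence faithfully flat. A faithfully flat monomorphism is an isomorphism, which gives the first claim. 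The second assertion is entirely parallel: if $\mathscr{R}_{us,\kappa}(\Pic^0_{C/\kappa})=0$ then, by Theorem \ref{Structurethm}, $\ker\varsigma^\ast$ is split unipotent, hence a smooth connected split unipotent closed subgroup, hence contained in $\mathscr{R}_{us,\kappa}(\Pic^0_{C/\kappa})=0$; so $\ker\varsigma^\ast=0$, and $C$ is seminormal by the auxiliary claim.

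For the converses, assume $\delta(\kappa)\leq1$. Then Theorem \ref{Structurethm} upgrades the inclusions above to equalities $\ker\nu^\ast=\mathrm{uni}(\Pic^0_{C/\kappa})$ and $\ker\varsigma^\ast=\mathscr{R}_{us,\kappa}(\Pic^0_{C/\kappa})$, and both converses are immediate: if $\nu^\ast$ is an isomorphism then $\mathrm{uni}(\Pic^0_{C/\kappa})=\ker\nu^\ast=0$, and if $C$ is seminormal then $\varsigma$ is an isomorphism, whence $\mathscr{R}_{us,\kappa}(\Pic^0_{C/\kappa})=\ker\varsigma^\ast=0$.

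I expect the main obstacle to be the implication $\Rightarrow$ of the auxiliary claim from the first paragraph, i.e. passing from $\ker\varsigma^\ast=0$ to seminormality of $C$: Theorem \ref{Structurethm} by itself only guarantees that $\ker\varsigma^\ast$ is \emph{some} split unipotent subgroup, so one must still check that it is non-zero whenever $C$ fails to be seminormal, and this is precisely where the dimension count from the proof of Proposition \ref{seminormalpropII} has to be re-run. Everything else is formal bookkeeping with the filtration and the two maximality properties.
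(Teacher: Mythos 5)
Your proposal is correct and follows essentially the same route as the paper, which likewise reads the two forward implications off Theorem \ref{Structurethm} together with the observation, extracted from the proof of Proposition \ref{seminormalpropII}, that failure of seminormality forces $\ker\varsigma^\ast\neq 0$, the converses being immediate from the equalities $\ker\nu^\ast=\mathrm{uni}(\Pic^0_{C/\kappa})$ and $\ker\varsigma^\ast=\mathscr{R}_{us,\kappa}(\Pic^0_{C/\kappa})$ when $\delta(\kappa)\leq 1$. One caution: since $\Gamma(C_i,\Og_{C_i})$ may be inseparable over $\kappa$, the group $\Res_{\Gamma(C_i,\Og_{C_i})/\kappa}\Gm$ need not be a torus, so ``a torus has no nontrivial homomorphism to a unipotent group'' does not by itself give the nonvanishing of the cokernel of $\Res_{\Gamma(C_i,\Og_{C_i})/\kappa}\Gm\to\Res_{\kappa(x_{i+1})/\kappa}\Ga$; it is the dimension count you defer to at the end (the image has dimension $[\Gamma(C_i,\Og_{C_i}):\kappa]-[\Gamma(C_{i+1},\Og_{C_{i+1}}):\kappa]<[\kappa(x_{i+1}):\kappa]$) that does the work, exactly as in the proof of Proposition \ref{seminormalpropII}.
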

\begin{proof}
The first claim follows immediately from Theorem \ref{Structurethm}. Moreover, the proof of Proposition \ref{seminormalpropII} shows that, if $C$ is not seminormal, then $\ker \varsigma^\ast $ is non-trivial. Hence the second claim also follows.
\end{proof}
\subsection{Existence results}
Now let $S$ be an excellent Dedekind scheme of equal characteristic $p>0$ with field of fractions $K$ and \it perfect \rm residue fields. By Corollary \ref{Deltacor}, we have $\delta(K)=1.$ As in \cite{Ov}, we now have
\begin{theorem} \rm (cf. \cite[Chapter 10.3, Conjecture II]{BLR}) \it 
Let $C$ be a proper curve over $K$ such that $\mathrm{uni}(\Pic^0_{C/K})=0.$ Then $\Pic^0_{C/K}$ admits a Néron model over $S.$ 
\end{theorem}
\begin{proof}
We may assume without loss of generality that $C$ is reduced. Indeed, our assumption implies that the map $\Pic^0_{C/K} \to \Pic^0_{C_{\mathrm{red}}/K}$ is an isomorphism (\cite[Chapter 9.2, Proposition 5]{BLR} together with \cite[Lemma 2.3]{Ov}). Then we know that the map $\Pic^0_{C/K} \to \Pic^0_{\widetilde{C}/K}$ is an isomorphism (Corollary \ref{structurecor}). By Proposition \ref{Steinredprop}, we know that $\widetilde{C}$ is geometrically reduced over $\Gamma(\widetilde{C}, \Og_{\widetilde{C}}).$ If $S'$ denotes the integral closure of $S$ in $\Gamma(\widetilde{C}, \Og_{\widetilde{C}}),$ then \cite[Theorem 3.4]{Ov} shows that $\Pic^0_{\widetilde{C}/\Gamma(\widetilde{C}, \Og_{\widetilde{C}})}$ admits a Néron model $\widetilde{\mathscr{N}'}\to S'.$ Therefore the scheme
$$\widetilde{\mathscr{N}}:=\Res_{S'/S} \widetilde{\mathscr{N}}'$$
is a Néron model of $\Res_{\Gamma(\widetilde{C}, \Og_{\widetilde{C}})/K}\Pic^0_{\widetilde{C}/\Gamma(\widetilde{C}, \Og_{\widetilde{C}})}=\Pic^0_{\widetilde{C}/K}=\Pic^0_{C/K}$ over $S.$
\end{proof}\\
The following lemma is surely well-known (cf. \cite[Chapter 8.3, Lemma 3.49]{Liu} or \cite[Tag 0BG6]{Stacks} for similar statements). We include a proof for the reader's convenience.
\begin{lemma}
Let $R$ be an excellent discrete valuation ring and let $A$ be an integral regular flat $R$-algebra of finite type. Let $\widehat{R}$ be the completion of $R.$ Then $A\otimes_R\widehat{R}$ is regular. \label{stillregularlem}
\end{lemma}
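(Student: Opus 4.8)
The plan is to use the excellence of $R$ only through the fact that it forces the completion homomorphism $R\to\widehat{R}$ to be \emph{regular} (i.e. flat with geometrically regular fibres), and then to propagate regularity first through the base change along $R\to A$ and finally down the resulting flat morphism with regular fibres over the regular base $\Spec A$.

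First I would record that $R\to\widehat{R}$ is a regular ring map: an excellent discrete valuation ring is in particular a G-ring, and for a Noetherian local ring the G-ring property is precisely the assertion that its formal fibres are geometrically regular; combined with the (automatic) flatness of completion over a Noetherian ring, this is exactly regularity of $R\to\widehat{R}$ (\cite{Stacks}, Tag 07GH). Next I would base change along $R\to A$: since $A$ is of finite type over $R$ it is Noetherian, and $A\otimes_R\widehat{R}$ is of finite type over $\widehat{R}$, hence Noetherian as well; as regular ring maps are stable under base change along homomorphisms of Noetherian rings (\cite{Stacks}, Tag 07QI), the induced map $A\to A\otimes_R\widehat{R}$ is again regular. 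In particular it is flat and all of its fibres are geometrically regular.

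Finally I would conclude with the local criterion for regularity under a flat base change: for any prime $\mathfrak{q}\subseteq A\otimes_R\widehat{R}$ lying over $\mathfrak{p}\subseteq A$, the local ring $(A\otimes_R\widehat{R})_{\mathfrak{q}}$ is flat over the regular local ring $A_{\mathfrak{p}}$ and has regular closed fibre, hence is itself regular (\cite{Stacks}, Tag 00NU). Since $\mathfrak{q}$ was arbitrary, $A\otimes_R\widehat{R}$ is regular. I do not expect a genuine obstacle here; the only points needing care are the bookkeeping that every ring in sight (notably $A$ and $A\otimes_R\widehat{R}$) is Noetherian, so that the base-change stability of regular homomorphisms genuinely applies, and the observation that the hypothesis of excellence is used in exactly one place, namely the geometric regularity of the formal fibres of $R$.
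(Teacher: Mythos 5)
Your proof is correct, but it takes a genuinely different route from the one in the paper. You run the argument through the general theory of regular homomorphisms: excellence makes $R$ a G-ring, so $R\to\widehat{R}$ is flat with geometrically regular fibres; base change along the finite type map $R\to A$ preserves this; and then regularity of $(A\otimes_R\widehat{R})_{\mathfrak{q}}$ follows from flatness over the regular local ring $A_{\mathfrak{p}}$ together with regularity of the closed fibre. The paper instead argues prime by prime in an elementary way: for primes over the generic point of $\Spec\widehat{R}$ it only uses that $\Frac\widehat{R}$ is separable over $\Frac R$ (which, for a discrete valuation ring, is precisely what excellence adds), and for primes over the closed point it uses that $R\to\widehat{R}$ is an isomorphism on special fibres, so that $\mathfrak{p}=\mathfrak{q}\otimes_R\widehat{R}$, and then a dimension computation via universal catenarity and the dimension formula to match $\dim(A\otimes_R\widehat{R})_{\mathfrak{p}}$ with $\dim_{\kappa(\mathfrak{p})}\mathfrak{p}/\mathfrak{p}^2$. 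Your approach is shorter, needs neither integrality nor $R$-flatness of $A$, and generalises immediately beyond the DVR case to any Noetherian G-ring base; the paper's approach is more self-contained, avoiding the machinery of geometrically regular fibres and the ascent lemma. One small caution on your middle step: the blanket statement that regular ring maps are stable under base change along arbitrary homomorphisms of Noetherian rings is stronger than the classical result (which is for base change (essentially) of finite type; the general case needs Noetherianness of the tensor product and deeper input à la Andr\'e), so you should phrase the citation for the finite type base change $R\to A$ that you actually use --- with that phrasing the argument is complete.
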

\begin{proof}
Let $\mathfrak{p}\subseteq A\otimes_R\widehat{R}$ be a prime ideal. We must show that the localisation $(A\otimes_R\widehat{R})_{\mathfrak{p}}$ is regular. If $\mathfrak{p}$ maps to the generic point of $\Spec \widehat{R},$ this is clear because the extension $\Frac R \subseteq \Frac \widehat{R}$ is separable. Otherwise we put $\mathfrak{q}:=\mathfrak{p}\cap A$ and observe that the canonical map $A/\mathfrak{q} \to (A\otimes_R\widehat{R})/\mathfrak{p}$ is an isomorphism since $R\to \widehat{R}$ induces an isomorphism of residue fields. This shows that $\mathfrak{p}=\mathfrak{q}\otimes_R\widehat{R}$ and hence $\mathfrak{p}/\mathfrak{p}^2=\mathfrak{q}/\mathfrak{q}^2;$ moreover we see that the map $\kappa(\mathfrak{q})\to \kappa(\mathfrak{p})$ on residue fields is an isomorphism. Now choose a maximal chain $\mathfrak{p}_0\subset ... \subset \mathfrak{p}_n=\mathfrak{p}$ of prime ideals such that $n= \mathrm{ht}_{A\otimes_R\widehat{R}}\, \mathfrak{p}.$ Clearly $\mathfrak{p}_0$ is minimal, so $(A\otimes_R\widehat{R})/\mathfrak{p}_0$ is generically of the same dimension as $A\otimes_R\Frac{R}.$ Moreover, we see that
$$\mathrm{ht}_{(A\otimes_R\widehat{R})/\mathfrak{p}_0} \, \mathfrak{p}/\mathfrak{p}_0 = \mathrm{ht}_{A\otimes_R\widehat{R}} \, \mathfrak{p}.$$
But $R$ is universally catenary, so the dimension formula \cite[Tag 02IJ]{Stacks} shows that 
$\mathrm{ht}_A\, \mathfrak{q} = \mathrm{ht}_{(A\otimes_R\widehat{R})/\mathfrak{p}_0}\,\mathfrak{p}/\mathfrak{p}_0.$ From this, we deduce (replacing $\mathfrak{p}$ and $\mathfrak{q}$ with their respective localisations) that
$$\dim (A\otimes_R\widehat{R})_{\mathfrak{p}} = \dim A_{\mathfrak{q}} = \dim_{\kappa(\mathfrak{q})} \mathfrak{q}/\mathfrak{q}^2 = \dim_{\kappa(\mathfrak{p})} \mathfrak{p}/\mathfrak{p}^2.$$
\end{proof}\\
Finally, we have
\begin{theorem} \rm (cf. \cite[Chapter 10.3, Conjecture I]{BLR}) \it
Let $C$ be a proper curve over $K$ such that $\mathscr{R}_{us, K}(\Pic^0_{C/K})=0.$ Then $\Pic^0_{C/K}$ admits a Néron lft-model over $S.$ 
\end{theorem}
\begin{proof}
We may once again assume that $C$ is reduced. Our assumption implies that $\Pic^0_{C/K}$ admits Néron lft-models $\mathscr{N}_{\mathfrak{p}}$ at all closed points $\mathfrak{p}$ of $S$ \cite[Chapter 10.2, Theorem 2(b')]{BLR}.  By \cite[Chapter 10.1, Proposition 9]{BLR}, we may replace $S$ by a dense open subscheme (in particular, we may suppose that $S$ is affine). Moreover, \it loc. cit. \rm also shows that it suffices to find a finite locally free $\Gamma(S, \Og_S)$-module $\mathscr{L}$ and an isomorphism $\mathscr{L}\otimes_{\Gamma(S, \Og_S)} K \to \Lie \Pic^0_{C/K}=H^1(C, \Og_C)$ such that, for all closed points $\mathfrak{p}\in S,$ the images of $\Lie \mathscr{N}_{\mathfrak{p}}$ and $\mathscr{L}\otimes_{\Gamma(S, \Og_S)} \Og_{S, \mathfrak{p}}$ inside $\mathscr{L}\otimes_{\Gamma(S, \Og_S)} K$ coincide. Using the method from \cite[Corollary 3.7]{Ov}, we construct proper and flat\footnote{(though \it a priori \rm not necessarily cohomologically flat)} models $\mathscr{C}\to S$ and $\widetilde{\mathscr{C}}\to S$ of $C$ and $\widetilde{C},$ respectively, which fit into a co-cartesian diagram
$$\begin{CD}
\widetilde{\mathscr{C}}@>>>\mathscr{C}\\
@AAA@AAA\\
\Spec \mathcal{A}@>>>\Spec\mathcal{B},
\end{CD}$$
where $\mathcal{B}\subseteq \mathcal{A}$ are finite regular $\Gamma(S, \Og_S)$-algebras, such that the vertical maps are closed immersions. Replacing $S$ by a dense open subscheme if necessary, we may suppose that $H^1(\mathscr{C}, \Og_{\mathscr{C}})$ is locally free over $\Gamma(S, \Og_S);$ we shall now show that 
$$\mathscr{L}:=H^1(\mathscr{C}, \Og_{\mathscr{C}})$$ has the required properties. By Lemma \ref{stillregularlem}, we see that the scheme\footnote{For a closed point $\mathfrak{p}\in S$ and a morphism $X\to S,$ we put $X_{\mathfrak{p}}:=X\times_S\Spec \widehat{\Og^{\mathrm{sh}}_{S,{\mathfrak{p}}}}.$} $\widetilde{\mathscr{C}}_{\mathfrak{p}}$ is a proper, flat, and regular model of $\widetilde{C}\times_K\Spec \widehat{K^{\mathrm{sh}}}$ over $\Spec \widehat{\Og^{\mathrm{sh}}_{S, \mathfrak{p}}}.$ Using the method from the proofs of \cite[Theorem 4.3 and Proposition 4.4]{Ov}, we construct a proper birational morphism $\widetilde{\mathscr{C}}'_{\mathfrak{p}}\to \widetilde{\mathscr{C}}_{\mathfrak{p}}$ such that the push-out
$$\mathscr{C}'_{\mathfrak{p}}:=\widetilde{\mathscr{C}}'_{\mathfrak{p}}\cup_{\Spec \mathcal{A}_{\mathfrak{p}}} \Spec \mathcal{B}_{\mathfrak{p}}$$ is a proper, flat, and semi-factorial model of $C\times_K\Spec \widehat{K^{\mathrm{sh}}}.$ Note that we have a canonical map $\mathscr{C}'_{\mathfrak{p}}\to \mathscr{C}_{\mathfrak{p}}.$ The Lemma of five homomorphisms applied to the exact sequence (\ref{cohomseq}) from subsection \ref{Chaiproofsubsec} shows that the map $H^1(\mathscr{C}_{\mathfrak{p}}, \Og_{\mathscr{C}_{\mathfrak{p}}}) \to H^1(\mathscr{C}'_{\mathfrak{p}}, \Og_{\mathscr{C}'_{\mathfrak{p}}})$ is an isomorphism. Because the residue fields of $S$ are perfect and $H^1(\mathscr{C}, \Og_{\mathscr{C}})$ is torsion-free, Corollary \ref{lengthscor} (together with \cite[Chapter 10.1, Proposition 3]{BLR}) shows that the map $$H^1(\mathscr{C}'_{\mathfrak{p}}, \Og_{\mathscr{C}'_{\mathfrak{p}}}) \to (\Lie \mathscr{N}_{\mathfrak{p}})\otimes_{\Og_{S, \mathfrak{p}}} \widehat{\Og^{\mathrm{sh}}_{S,{\mathfrak{p}}}}$$ is an isomorphism. Now observe that $\mathscr{C}\times_S\Spec \Og_{S, \mathfrak{p}}$ is cohomologically flat in dimension zero over $\Og_{S, \mathfrak{p}}$ for all closed points $\mathfrak{p}\in S$ \cite[Chapter 5.3, Corollary 3.22 and Remark 3.30]{Liu}, so $\Pic_{\mathscr{C}\times_S\Spec \Og_{S, \mathfrak{p}}/\Og_{S, \mathfrak{p}}}$ is a smooth algebraic space over $\Og_{S, \mathfrak{p}}$ (\cite[Chapter 8.3, Theorem 1]{BLR} and \cite[Corollaire 2.3.2]{RaynaudPic}). It is easy to see that the Néron mapping property remains valid for algebraic spaces, so there is a canonical map $\mathscr{L}\otimes_{\Gamma(S, \Og_S)} \Og_{S, \mathfrak{p}} \to \Lie \mathscr{N}_{\mathfrak{p}}.$ Our claim follows once we establish that this map is an isomorphism; however, this can be checked after applying the functor $-\otimes_{\Og_{S, \mathfrak{p}}} \widehat{\Og^{\mathrm{sh}}_{S,{\mathfrak{p}}}}.$ 
\end{proof}\\
We are now able to state the following general result on Néron lft-models of Jacobians, generalising Raynaud's theory \cite{RaynaudPic}: 
\begin{theorem}
Let $R$ be a discrete valuation ring with field of fractions $K$ and residue field $\kappa$ (which we do not assume to be perfect). Let $\widehat{R}$ be the completion of $R$ and denote by $\widehat{K}$ its field of fractions. Let $C$ be a proper reduced curve over $K$ and assume that the normalisation $\widetilde{C}$ admits a proper regular model $\widetilde{\mathscr{C}} \to \Spec R.$ Let $\Spec B$ be the singular locus of $C$ endowed with its reduced subscheme structure and let $\Spec A\subseteq \widetilde{C}$ be its scheme-theoretic preimage. Finally, assume that one of the conditions is satisfied: 
\begin{itemize}
\item each irreducible component of $\widetilde{C}_{K^{\mathrm{sh}}}$ has a $K^{\mathrm{sh}}$-point, or
\item $\kappa$ is perfect. 
\end{itemize}
Then the following are equivalent: \\
(i) $\Pic^0_{C/K}$ admits a Néron lft-model over $R,$ \\
(ii) $C_{\widehat{K}}$ is seminormal, and \\
(iii) $C$ is seminormal and $A\otimes_K\widehat{K}$ is reduced. \\
Moreover, if these conditions are satisfied, then after modifying $\widetilde{\mathscr{C}}$ if necessary, there exists a finite morphism $\psi\colon \widetilde{\mathscr{C}} \to \mathscr{C}$ over $R$ such that $\psi_{R^{\mathrm{sh}}}\colon \widetilde{\mathscr{C}}_{R^{\mathrm{sh}}} \to \mathscr{C}_{R^{\mathrm{sh}}}$ is a strong model pair of $C_{K^{\mathrm{sh}}}$ and such that $\mathscr{C}_{R^{\mathrm{sh}}}$ is semi-factorial. In particular, $P_{\mathscr{C}/R}^{\mathrm{sep}}$ is the Néron lft-model of $\Pic^0_{C/K}.$ \label{Raynaudtypethm}
\end{theorem}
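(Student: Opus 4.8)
The plan is to deduce the equivalences from the structural results of this section (Theorem~\ref{Structurethm}, Corollary~\ref{structurecor}), from the Picard-functor description of the N\'eron lft-model obtained in Section~3 (Proposition~\ref{modpairexprop} together with the corollary above identifying $P^{\mathrm{sep}}_{\mathscr{C}/\Og_K}$ with the N\'eron lft-model when the latter exists), and from the existence criterion \cite{BLR}, Chapter~10.2, Theorem~2(b'). First I would observe that condition $(i)$ is insensitive to the base changes $R\to\widehat R$, $R\to R^{\mathrm{sh}}$ and $R\to\widehat{R^{\mathrm{sh}}}$: by \cite{BLR}, Chapter~10.1, Proposition~3 the group $\Pic^0_{C/K}$ admits a N\'eron lft-model over $R$ if and only if its base change does over each of these rings, and by Lemma~\ref{stillregularlem} the model $\widetilde{\mathscr{C}}$ stays regular under them, so the standing hypotheses persist; moreover seminormality of $C$ is insensitive to ind-\'etale base change on the ground field. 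Hence $(i)\Leftrightarrow(ii)$ may be checked after replacing $R$ by $\widehat R$ or by $\widehat{R^{\mathrm{sh}}}$, whereas $(ii)\Leftrightarrow(iii)$ is the genuine base-change statement along the ramified extension $K\to\widehat K$ and is treated over the original $R$.

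For $(i)\Leftrightarrow(ii)$, by \cite{BLR}, Chapter~10.2, Theorem~2(b') it is equivalent to show that $\mathscr{R}_{us,\widehat K}(\Pic^0_{C_{\widehat K}/\widehat K})=0$ if and only if $C_{\widehat K}$ is seminormal. The implication from $\mathscr{R}_{us}=0$ to seminormality is unconditional: if $C_{\widehat K}$ is not seminormal then, by the first assertion of Theorem~\ref{Structurethm} (which imposes no condition on $\delta$), the kernel of $\Pic^0_{C_{\widehat K}/\widehat K}\to\Pic^0_{(C_{\widehat K})^{\mathrm{sn}}/\widehat K}$ is a non-trivial split unipotent subgroup, whence $\mathscr{R}_{us,\widehat K}(\Pic^0_{C_{\widehat K}/\widehat K})\neq 0$. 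For the converse I would split into the two cases of the hypothesis. If $\kappa$ is perfect I pass to $\widehat{R^{\mathrm{sh}}}$, whose residue field $\kappa^{\mathrm{sep}}=\kappa$ is perfect, so that $\delta(\widehat{K^{\mathrm{sh}}})=1$ by Lemma~\ref{plusonelem} and Corollary~\ref{structurecor} yields the claim. If instead every irreducible component of $\widetilde C_{K^{\mathrm{sh}}}$ has a $K^{\mathrm{sh}}$-rational point, then --- $\widetilde C$ being regular --- that point is a smooth point of its component, so each component of $\widetilde C_{K^{\mathrm{sh}}}$ has separable function field over $K^{\mathrm{sh}}$ and is geometrically reduced; by descent $\widetilde C$, and hence $C$ (whose components have the same function fields), is geometrically reduced over $K$, which places us in the setting of \cite{Ov}, so that the seminormal geometrically reduced curve $C_{\widehat K}$ has a Jacobian admitting a N\'eron lft-model over $\widehat R$: this is obtained by combining the N\'eron model of $\Pic^0$ of the normalisation (\cite{Ov}, Theorem~3.4) with the N\'eron lft-models of the torus-type quotients arising from the singularities, via the exact sequence of \cite{Ov}, Proposition~2.30.

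For $(ii)\Leftrightarrow(iii)$ I would analyse the base change of the conductor of $\widetilde C\to C$ directly. Since $C$ is seminormal, the scheme-theoretic preimage $\Spec A$ of the reduced singular locus $\Spec B$ is already reduced and the conductor ideal is radical in $\Og_C$; as $B$ embeds into $A$, reducedness of $A\otimes_K\widehat K$ forces that of $B\otimes_K\widehat K$, so after base change both halves of the conductor remain radical, and the residue-field-matching part of seminormality descends from $C$ --- provided one knows that $\widetilde C_{\widehat K}$ genuinely computes the normalisation of $C_{\widehat K}$. This last point is exactly where the two hypotheses are used: either $\delta(\widehat K)=1$, in which case $\widetilde C_{\widehat K}$ is regular (Lemma~\ref{stillregularlem} applied to $\widetilde{\mathscr{C}}$, together with Proposition~\ref{Steinredprop}) and hence is the normalisation, or $\widetilde C$ is geometrically reduced over $K$ by the rational-point hypothesis, so that $\widetilde C_{\widehat K}$ stays reduced and one invokes the results of \cite{Ov}. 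Conversely, faithfully flat descent of seminormality along $K\to\widehat K$ shows that $(ii)$ implies $C$ seminormal, and the same normalisation analysis then forces $A\otimes_K\widehat K$ to be reduced. I expect this interplay between the normalisation and the ramified base change $K\to\widehat K$, together with the case distinction it requires, to be the main obstacle.

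For the supplementary assertion: once $(ii)$ holds, Proposition~\ref{modpairexprop} applied over $\widehat{R^{\mathrm{sh}}}$ (resp.\ over $\widehat R$ in the geometrically reduced case) to the seminormal curve $C_{\widehat{K^{\mathrm{sh}}}}$ and the proper regular model $\widetilde{\mathscr{C}}_{\widehat{R^{\mathrm{sh}}}}$ produces, after a modification of that model, a strong model pair with semi-factorial target. Since the required blow-ups of points on the special fibre and the push-out along $\Spec\mathcal{A}\to\Spec\mathcal{B}$ --- which commutes with arbitrary base change by \cite{Ov} --- can be carried out over $R$ itself, they descend to a finite morphism $\psi\colon\widetilde{\mathscr{C}}\to\mathscr{C}$ over $R$ whose base change to $R^{\mathrm{sh}}$ is a strong model pair of $C_{K^{\mathrm{sh}}}$ with $\mathscr{C}_{R^{\mathrm{sh}}}$ semi-factorial. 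As $\Pic^0_{C/K}$ now admits a N\'eron lft-model, the corollary identifying $P^{\mathrm{sep}}$ with it applies over $\widehat{R^{\mathrm{sh}}}$; since the formation of $P^{\mathrm{sep}}_{\mathscr{C}/R}$ commutes with the faithfully flat base change $R\to\widehat{R^{\mathrm{sh}}}$ and the property of being a N\'eron lft-model descends along it, $P^{\mathrm{sep}}_{\mathscr{C}/R}$ is the N\'eron lft-model of $\Pic^0_{C/K}$.
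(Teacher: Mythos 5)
Your reduction of (i) $\iff$ (ii) to the criterion of \cite{BLR}, Chapter 10.2, Theorem 2 together with the structure results is indeed the paper's route, but two of your steps have genuine gaps. First, the fact that (i) (equivalently, seminormality of $C_{\widehat K}$) forces $A\otimes_K\widehat K$ to be reduced is precisely the non-trivial content of (i) $\Rightarrow$ (iii), and ``the same normalisation analysis'' is not an argument: the paper must produce, via Lemma \ref{nilplem}, a nilpotent $y\in A\otimes_K\widehat K$ not lying in $B\otimes_K\widehat K$, replace it by a power so that $y\notin B\otimes_K\widehat K$ while $y^2,y^3\in B\otimes_K\widehat K$, lift it to a function $f$ on an open subset of $\widetilde{C}_{\widehat K}$, and exhibit an open subscheme of the push-out $C_{\widehat K}$ on which $f^2,f^3$ are regular but $f$ is not, contradicting seminormality. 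Nothing in your sketch yields this contradiction. Note also that in this direction the paper obtains ``$C$ is seminormal'' from $\mathscr{R}_{us,K}(\Pic^0_{C/K})=0$ via Corollary \ref{structurecor}, rather than from ``faithfully flat descent of seminormality along $K\subseteq\widehat K$'', which you assert without proof; since $\widehat K/K$ may be inseparable this is not formal (it can be salvaged because $\Og_{C^{\mathrm{sn}}}\otimes_K\widehat K$ embeds into the reduced ring $\nu_\ast\Og_{\widetilde C}\otimes_K\widehat K$, but that needs saying). Second, in the rational-point case of (ii) $\Rightarrow$ (i) you propose to obtain the N\'eron lft-model by ``combining'' the N\'eron model of $\Pic^0_{\widetilde C}$ with the lft-models of the toric pieces via the exact sequence of \cite{Ov}, Proposition 2.30; but N\'eron (lft-)models cannot be assembled along exact sequences of generic fibres --- the example in Paragraph \ref{examplepar} of this very paper shows the induced sequence of N\'eron models of a Jacobian need not be exact. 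The correct move is to stay with the group-theoretic criterion and quote \cite{Ov}, Corollary 2.34, which gives ``seminormal $\Rightarrow$ no split unipotent subgroup'' for geometrically reduced curves.

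Two further points. You locate the hypotheses (rational points on the components of $\widetilde C_{K^{\mathrm{sh}}}$, resp.\ $\kappa$ perfect) at the step ``$\widetilde C_{\widehat K}$ computes the normalisation of $C_{\widehat K}$''; in fact the regularity of $\widetilde C_{\widehat K}$ is unconditional: it follows from the existence of the proper regular model $\widetilde{\mathscr{C}}$ alone, by the argument of Lemma \ref{stillregularlem} at points of the special fibre together with properness and excellence of $\widehat R$ (this is the paper's opening observation, and it needs neither hypothesis nor Proposition \ref{Steinredprop}). The hypotheses are needed only for the equivalence between vanishing of $\mathscr{R}_{us}$ and seminormality (Corollary \ref{structurecor} requires $\delta\leq 1$; the geometrically reduced case requires \cite{Ov}, Corollary 2.34). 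Finally, for the supplementary assertion you build the strong model pair over $\widehat{R^{\mathrm{sh}}}$ and claim the required blow-ups ``can be carried out over $R$ itself''; closed points of the special fibre over $\widehat{R^{\mathrm{sh}}}$ need not be defined over $R$, so this descent is not automatic. The paper instead constructs $\psi$ directly over $R$ following \cite{Ov}, Theorem 4.3, replacing the embedded-resolution reference by \cite{Stacks}, Tag 0BIC --- admissible precisely because reducedness of $A\otimes_K\widehat K$ makes the integral closures of $R$ in $A$ and $B$ finite --- and only base changes to $R^{\mathrm{sh}}$ to verify semi-factoriality and to identify $P^{\mathrm{sep}}_{\mathscr{C}/R}$ with the N\'eron lft-model, adapting the divisibility argument to separably closed residue fields.
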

\begin{proof}
Throughout the proof, we may assume without loss of generality that $C$ is connected. We begin by observing that $\widetilde{C}_{\widehat{K}}$ is regular. Indeed, the argument from the proof of Lemma \ref{stillregularlem} shows that $\widetilde{\mathscr{C}}_{\widehat{R}}$ is regular at all points on the special fibre. However, since $\widetilde{\mathscr{C}}_{\widehat{R}}$ is excellent, its singular locus is closed. Because this scheme is proper over $\widehat{R},$ the singular locus would have to intersect the special fibre if it were non-empty. Hence $\widetilde{\mathscr{C}}_{\widehat{R}}$ is already regular (see also \cite[Tag 0BG6 (3)]{Stacks}). Since the morphism $\psi_K\colon \widetilde{C} \to C$ is scheme-theoretically dominant, so is $\psi_K\times_K \mathrm{Id}_{\widehat{K}}.$ In particular, $C_{\widehat{K}}$ is reduced.\\
(i) $\iff$ (ii): By \cite[Chapter 10.2, Theorem 2]{BLR}, (i) is equivalent to $\mathscr{R}_{us, \widehat{K}}(\Pic^0_{C_{\widehat{K}}/\widehat{K}})=0.$ Because we already know that $C_{\widehat{K}}$ is reduced, this is equivalent to (ii) by Corollary \ref{structurecor} (if $\kappa$ is perfect; see Lemma \ref{plusonelem}) and \cite[Corollary 2.34]{Ov} (note that the existence of $K^{\mathrm{sh}}$-points as in the Theorem implies geometric reducedness). \\
(iii) $\Rightarrow$ (ii): If (iii) holds, then $C$ arises as the push-out of $\Spec A \to \widetilde{C}$ along $\Spec A \to \Spec B$ \cite[Theorem 2.24(ii)]{Ov}. Hence $C_{\widehat{K}}$ arises as the push-out of $\Spec A\otimes_K\widehat{K}\to \widetilde{C}_{\widehat{K}}$ along $\Spec A\otimes_K\widehat{K} \to \Spec B\otimes_K\widehat{K}.$ Therefore our assumptions imply that $C_{\widehat{K}}$ is seminormal. \\
(i) $\Rightarrow$ (iii): If $\Pic^0_{C/K}$ admits a Néron lft-model, then $\mathscr{R}_{us,K}(\Pic^0_{C/K})=0.$ This follows from \cite[Chapter 10.2, Theorem 2]{BLR} together with the fact that $\mathscr{R}_{us, K}(\Pic^0_{C/K})_{\widehat{K}}\subseteq \mathscr{R}_{us, \widehat{K}}(\Pic^0_{C_{\widehat{K}}/\widehat{K}}).$ However, $\mathscr{R}_{us, K}(\Pic^0_{C/K})$ vanishes only if $C$ is seminormal (Corollary \ref{structurecor}). In particular, $C$ arises as the push-out of $\Spec A \to \widetilde{C}$ along $\Spec A \to \Spec B$ \cite[Theorem 2.24]{Ov}. Hence $C_{\widehat{K}}$ is the push-out of $\Spec A\otimes_K\widehat{K} \to \widetilde{C}_{\widehat{K}}$ along $\Spec A\otimes _K\widehat{K} \to \Spec B\otimes_K\widehat{K},$ so if $A\otimes_K\widehat{K}$ is non-reduced, then $C_{\widehat{K}}$ is not seminormal. Indeed, Lemma \ref{nilplem} below shows that we can find a nilpotent element $y\in A\otimes_K\widehat{K}$ not contained in $B\otimes_K\widehat{K}.$ In particular, there exists $N_0\in \N$ such that, for all $n> N_0,$ we have $y^n\in B\otimes_K\widehat{K}.$ Choose $N_0$ minimal with this property. Then $N_0\geq 1$ by assumption. Now replace $y$ by $y^{N_0}.$ Then $y\not\in B\otimes_K\widehat{K},$ but $y^2$ and $y^3$ both lie in $B\otimes_K\widehat{K}.$ If $U$ is an open affine neighbourhood of $\Spec A\otimes_K\widehat{K}$ in $\widetilde{C}_{\widehat{K}},$ then we can find $f\in \Gamma(U, \Og_U)$ whose image in $A$ is $y.$ The scheme $U':=U\cup_{\Spec A\otimes_K\widehat{K}} (\Spec B\otimes_K\widehat{K})$ is an open subscheme of $C_{\widehat{K}}$ by construction. Moreover, $f^2$ and $f^3$ are global functions on $U'$ but $f$ is not. Hence $U'$ (and therefore $C_{\widehat{K}}$) is not seminormal. This contradicts (i) $\iff$ (ii), which we have already proven. \\
We construct the morphism $\psi\colon \widetilde{\mathscr{C}} \to \mathscr{C}$ as in \cite[Theorem 4.3]{Ov}. Moreover, we replace the reference to the embedded resolution theorem as formulated in \cite[Chapter 9.2, Theorem 2.26]{Liu}, by a reference to \cite[Tag 0BIC]{Stacks}, in order to avoid the hypothesis on excellence imposed in \cite{Liu}. Note that the conditions of \cite[Tag 0BIC]{Stacks} are satisfied since $A\otimes_K\widehat{K}$ (and hence $B\otimes_K\widehat{K}$) are reduced, so the integral closures of $R$ in $A$ and $B$ are finite over $R.$ Our hypotheses further imply that the map $\Pic C_{K^{\mathrm{sh}}} \to \Pic_{C_{K^{\mathrm{sh}}}/K^{\mathrm{sh}}}(K^{\mathrm{sh}})$ is bijective. Since we already know that $\Pic^0_{C/K}$ admits a Néron lft-model $\mathscr{N},$ the proofs of Proposition \ref{fingennérprop} and Corollary \ref{Nérisomcor} can be taken \it mutatis mutandis \rm to show that $P_{\mathscr{C}_{R^{\mathrm{sh}}}/R^{\mathrm{sh}}} ^ \mathrm{sep} \to \mathscr{N}_{R^{\mathrm{sh}}}$ is an isomorphism. Indeed, if $G$ is a smooth connected algebraic group over a \it separably \rm closed field $\kappa,$ it is still true that $G(\kappa)$ is $n$-divisible for all positive integers $n$ invertible in $\kappa,$ so if $G(\kappa)$ is finitely generated then $G(\kappa)$ is finite. Hence $G=0$ since $G$ is smooth.
\end{proof}
\begin{lemma}
With the notation from Theorem \ref{Raynaudtypethm}, let $B\subseteq A$ be finite $K$-algebras such that the inclusion is not an equality and such that $B$ is a field. Moreover, assume that $A\otimes_K\widehat{K}$ is non-reduced. Then there exists a nilpotent element $y\in A\otimes_K\widehat{K}$ not contained in $B\otimes_K\widehat{K}.$ \label{nilplem}
\end{lemma}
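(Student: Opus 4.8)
The assertion is equivalent to saying that the nilradical $\mathrm{Nil}(A\otimes_K\widehat{K})$ is \emph{not} contained in the subring $B\otimes_K\widehat{K}$ (which injects into $A\otimes_K\widehat{K}$ because $B\hookrightarrow A$ and $\widehat{K}/K$ is a field extension, hence flat). The plan is a short case analysis on the algebra structure of $A$. The guiding idea is that the non-reducedness of $A\otimes_K\widehat{K}$ produces a nonzero nilpotent somewhere, while the strict inclusion $B\subsetneq A$ always leaves extra room in $A\otimes_K\widehat{K}$---an additional direct factor, or additional basis directions over $B$---into which such a nilpotent can be moved so that it escapes $B\otimes_K\widehat{K}$.

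First I would dispose of the case in which $A$ itself is non-reduced: if $0\neq n\in\mathrm{Nil}(A)$, then $n\notin B$ since $B$ is a field (so $\mathrm{Nil}(A)\cap B=0$), and writing $A=B\oplus W$ as $K$-vector spaces with $W\neq 0$ one sees that $n\otimes 1$ has nonzero component in $W\otimes_K\widehat{K}$; thus $y:=n\otimes 1$ already works. So I may assume $A$ is reduced, hence $A\cong\prod_{i=1}^{r}L_i$ with each $L_i$ a finite field extension of $B$, and $A\neq B$ forces either $r\geq 2$, or $r=1$ with $[L_1:B]\geq 2$.

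If $r\geq 2$: since $A\otimes_K\widehat{K}=\prod_i(L_i\otimes_K\widehat{K})$ is non-reduced, some factor $L_{i_0}\otimes_K\widehat{K}$ has a nonzero nilpotent $\eta$, and I would take $y$ to be $\eta$ placed in the $i_0$-slot; because $B\otimes_K\widehat{K}$ injects into each $L_j\otimes_K\widehat{K}$, an element of $B\otimes_K\widehat{K}$ whose image in some factor vanishes is zero, so $y\notin B\otimes_K\widehat{K}$. For $r=1$, write $A=L$ and use $L\otimes_K\widehat{K}=L\otimes_B(B\otimes_K\widehat{K})=\bigoplus_{i=1}^{d}e_i\otimes_B(B\otimes_K\widehat{K})$ for a $B$-basis $e_1=1,\dots,e_d$ of $L$. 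If $B\otimes_K\widehat{K}$ is non-reduced, pick $0\neq n\in\mathrm{Nil}(B\otimes_K\widehat{K})$ and $\ell\in L\setminus B$; writing $\ell=\sum_i c_ie_i$ with some $c_{i_0}\neq 0$ for $i_0\geq 2$, the element $y:=(\ell\otimes 1)\,n$ has $e_{i_0}$-component equal to $c_{i_0}n$, which is nonzero because $c_{i_0}$ is a unit of $B\otimes_K\widehat{K}$ (a nonzero element of the field $B$), so $y\notin B\otimes_K\widehat{K}$. If instead $B\otimes_K\widehat{K}$ is reduced, it is a product of fields $\prod_j M_j$ finite over $\widehat{K}$, so $L\otimes_K\widehat{K}=\prod_j(L\otimes_B M_j)$ is non-reduced, some $L\otimes_B M_{j_0}$ is non-reduced, and a nonzero nilpotent of it, placed in the $j_0$-slot, cannot lie in $\prod_j M_j=B\otimes_K\widehat{K}$ since each $M_j$ is a field.

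I do not anticipate a serious obstacle; the only point needing a little care is the bookkeeping in the $r=1$ subcase where $B\otimes_K\widehat{K}$ is non-reduced, namely checking that multiplying a nonzero nilpotent of $B\otimes_K\widehat{K}$ by an element of $L\setminus B$ genuinely moves it out of $B\otimes_K\widehat{K}$; this rests on the freeness of $L$ over the field $B$ and on nonzero elements of $B$ being units in $B\otimes_K\widehat{K}$. Everything else is the standard structure theory of finite algebras over a field, together with the exactness of $-\otimes_K\widehat{K}$.
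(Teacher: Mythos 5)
Your proof is correct, but it is organised quite differently from the paper's. The paper gives a single three-line contradiction argument with no case analysis: assume every nilpotent of $A\otimes_K\widehat{K}$ lies in $B\otimes_K\widehat{K}$; since $A\otimes_K\widehat{K}$ is non-reduced this forces a nonzero nilpotent $\nu\in B\otimes_K\widehat{K}$, and then, completing $1$ to a $B$-basis $1,\alpha_1,\dots,\alpha_d$ of $A$ (possible since $B$ is a field and $B\subsetneq A$), the element $\nu\cdot(\alpha_1\otimes 1)$ is nilpotent but has a nonzero coefficient outside the $1\otimes 1$-component of the free $B\otimes_K\widehat{K}$-module $A\otimes_K\widehat{K}$ --- contradiction. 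Your key trick in the hardest branch (multiplying a nilpotent of $B\otimes_K\widehat{K}$ by an element of $A\setminus B$ and reading off a basis coefficient) is exactly the paper's mechanism, but you only invoke it after splitting into cases according to whether $A$ is reduced, whether $A$ is a product or a field, and whether $B\otimes_K\widehat{K}$ is reduced, using the structure theory of reduced finite $K$-algebras in the other branches. The paper's contradiction framing makes all of that structure theory unnecessary: either a nilpotent already sits outside $B\otimes_K\widehat{K}$, or all nilpotents sit inside and the multiplication trick escapes. What your route buys is an explicit nilpotent in each structural situation (and no argument by contradiction), at the cost of length; each of your individual steps --- injectivity of $B\otimes_K\widehat{K}$ into the factors, freeness of $L$ over $B$, nonzero elements of $B$ acting invertibly --- checks out, so there is no gap, just a longer path to the same place.
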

\begin{proof}
Let $N_A$ and $N_B$ be the ideals of nilpotent elements in $A\otimes_K\widehat{K}$ and $B\otimes_K\widehat{K},$ respectively. Assume, for the sake of a contradiction, that the inclusion $N_B\subseteq N_A$ is an equality. Then $N_B\not=0$ by assumption. Now choose a $B$-basis $1, \alpha_1,..., \alpha_d$ of $A$ for some $d\in \N.$ Let $\nu\in N_B$ be any non-zero element. Since $1\otimes 1, \alpha_1\otimes 1,..., \alpha_d \otimes 1$ is a $B\otimes_K\widehat{K}$-basis of $A\otimes_K\widehat{K},$ the element $\nu \cdot \alpha_1\otimes 1 \in A\otimes_K\widehat{K}$ is not contained in $B\otimes_K\widehat{K}.$ However, it is clearly nilpotent, contradicting the assumption that $N_B=N_A.$ 
\end{proof}\\
In order to determine when $\Pic^0_{C/K}$ admits a Néron model (i. e. when the Néron lft-model is of finite type over $R$), we need to understand the dimension of the maximal split torus inside $\Pic^0_{C/K}$ \cite[Chapter 10.2, Theorem 2]{BLR}. This invariant turns out to be purely combinatorial in nature. We begin with the following 
\begin{definition}
Let $\kappa$ be an arbitrary field and let $C$ be a curve of finite type over $\kappa.$ Let $X_1,..., X_n$ be the irreducible components of $C;$ we fix the ordering on this set once and for all. \\
(i) A closed point $x$ on $C$ is a \rm multibranch point \it if $C$ is not unibranch at $x,$ and an \rm intersection point \it if $x$ is contained in at least two irreducible components of $C.$ \\
(ii) We define the \rm toric graph\footnote{We allow loops and multiple edges, hence some authors would call $\Gamma(C)$ a \it multigraph \rm or a \it pseudograph. \rm} \it $\Gamma=\Gamma(C)$ of $C$ as follows:
\begin{itemize}
\item The vertices of $\Gamma$ are the irreducible components $X_1,..., X_n$ of $C.$
\item For all intersection points $x\in C,$ let $\Sigma(x)$ be the set of all irreducible components containing $x.$ For $i\not=j,$ an edge connecting $X_i$ and $X_j$ is a pair $(\{X_i, X_j\}, x),$ such that $\{X_i, X_j\}\subseteq \Sigma(x)$ and such that there is no element between $X_i$ and $X_j$ in $\Sigma(x)$ with respect to the induced ordering.
\item For each $i=1,...,n,$ the number of loops centred at $X_i$ is equal to
$$\sum_{\text{$x\in X_i $ \rm  multibranch in $X_i$}} (n_x-1),$$
where $n_x$ is the number of branches of $X_i$ at $x$ \cite[Tag 0C38]{Stacks}.
\end{itemize}
\end{definition}
If $C$ and $C'$ are curves of finite type over $\kappa,$ then $\Gamma(C\sqcup C')\cong \Gamma(C)\sqcup \Gamma(C'),$ and one shows easily by induction on the number of irreducible components that if $C$ is connected, then so is $\Gamma(C).$
\begin{lemma}  \label{homeographlem} Let $C \to C'$ be a finite morphism of proper curves over $\kappa$ which is an homeomorphism (with compatible orderings on the sets of irreducible components of $C$ and $C'$) such that the induced map $C_{\mathrm{red}}^{\sim} \to C_{\mathrm{red}}^{' \sim}$ is an isomorphism. Then $\Gamma(C)$ and $\Gamma(C')$ are isomorphic.
\end{lemma}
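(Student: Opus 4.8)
The toric graph $\Gamma(C)$ is built from three ingredients: the ordered set of irreducible components of $C$; for each closed point $x$, the set $\Sigma(x)$ of components through $x$ together with its induced ordering; and, for each component $X_i$ and each closed point $x\in X_i$, the number $n_x$ of branches of $X_i$ at $x$. The plan is to show that $f\colon C\to C'$ respects all three ingredients --- the first two because $f$ is a homeomorphism and the third because of the hypothesis on normalisations --- and then to read off the isomorphism $\Gamma(C)\cong\Gamma(C')$.

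First I would record the topological input. A homeomorphism carries irreducible components onto irreducible components, so, using the compatibility of the two orderings, $f$ carries $X_i$ onto $X_i'$; in particular, for a closed point $x\in C$ with image $x'=f(x)$ one has $x\in X_i$ if and only if $x'\in X_i'$. It follows at once that $x$ is an intersection point precisely when $x'$ is, that the vertex bijection $X_i\leftrightarrow X_i'$ identifies $\Sigma(x)$ with $\Sigma(x')$ compatibly with the induced orderings, and hence that the rule $(\{X_i,X_j\},x)\mapsto(\{X_i',X_j'\},x')$ is an endpoint-preserving bijection from the non-loop edges of $\Gamma(C)$ to those of $\Gamma(C')$.

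The substantive point, which I expect to be the main obstacle, is to match the loop multiplicities, that is, the numbers $n_x$. Here I would first observe that, since $X_i$ is integral, the number of branches of $X_i$ at a closed point $x$ equals the number of points of $\widetilde{X_i}$ lying over $x$: the integral closure of the one-dimensional Noetherian local domain $\mathcal{O}_{X_i,x}$ in its fraction field is a semilocal Dedekind ring whose maximal ideals are in bijection with that fibre. Next, since the morphism $C_{\mathrm{red}}\to C_{\mathrm{red}}'$ induced by $f$ is dominant and maps generic points to generic points, normalisation is functorial for it, and the resulting map $C_{\mathrm{red}}^{\sim}\to C_{\mathrm{red}}^{'\sim}$ --- an isomorphism by hypothesis --- lies over $C_{\mathrm{red}}\to C_{\mathrm{red}}'$, hence over $f$; by the compatibility of orderings it restricts, for each $i$, to an isomorphism $\nu_i\colon\widetilde{X_i}\to\widetilde{X_i'}$ fitting into a commutative square
$$\begin{CD}
\widetilde{X_i} @>{\nu_i}>> \widetilde{X_i'}\\
@VVV@VVV\\
C @>{f}>> C'.
\end{CD}$$
Because $\nu_i$ is an isomorphism and $f$ a homeomorphism, this square carries the fibre of $\widetilde{X_i}\to C$ over $x$ bijectively onto the fibre of $\widetilde{X_i'}\to C'$ over $x'$; thus $n_x=n_{x'}$. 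In particular $x$ is multibranch in $X_i$ exactly when $x'$ is multibranch in $X_i'$, and therefore
$$\sum_{\text{$x\in X_i$ multibranch in $X_i$}}(n_x-1)=\sum_{\text{$x'\in X_i'$ multibranch in $X_i'$}}(n_{x'}-1),$$
so $\Gamma(C)$ and $\Gamma(C')$ carry the same number of loops at $X_i$ and at $X_i'$ respectively.

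Finally I would assemble the pieces: the vertex bijection $X_i\mapsto X_i'$, the edge bijection of the second step, and the equality of loop multiplicities of the third step together define an isomorphism $\Gamma(C)\to\Gamma(C')$. Beyond the identification of the branch number with a fibre cardinality and the standard functoriality of normalisation for the morphism in play, nothing but bookkeeping remains, provided the orderings are kept compatible throughout.
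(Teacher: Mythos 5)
Your proposal is correct and follows essentially the same route as the paper, whose proof is simply that the claim "follows immediately from the definition together with \cite{Stacks}, Tag 0C1S (1)": you unwind the definition (the homeomorphism with compatible orderings matches components, the sets $\Sigma(x)$ with their orderings, and hence the non-loop edges), and your identification of $n_x$ with the cardinality of the normalisation fibre over $x$ is precisely the content of the cited Stacks lemma, combined with the hypothesis that the map on normalisations is an isomorphism over $f$ to equate the loop counts. The only cosmetic remark is that your parenthetical justification of "branches $=$ points of $\widetilde{X_i}$ over $x$" really is the cited lemma rather than something needing the semilocal Dedekind argument alone, but this does not affect correctness.
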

\begin{proof}
This follows immediately from the definition together with \cite[Tag 0C1S (1)]{Stacks}. 
\end{proof}\\
$\mathbf{Remark}.$ Because the definition of $\Gamma$ involves choices, $\Gamma(C)$ is not functorial in $C.$ In particular, there is no canonical isomorphism $\Gamma(C) \to \Gamma(C')$ in the situation of the Lemma. 
\begin{lemma}  \label{alternatingsplitlem}
Let $0\to G_1 \to G_2 \to ... \to G_d \to 0$ be an exact sequence of smooth connected commutative algebraic groups over the field $\kappa.$ For $i=1,..., d,$ let $s_i$ be the dimension of the maximal \rm split \it torus inside $G_i.$ Then we have
$$\sum_{i=1} ^d (-1) ^ i s_i=0.$$
\end{lemma}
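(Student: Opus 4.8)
The plan is to reduce to the case $d=3$, i.e.\ to the additivity of $s$ on short exact sequences, and then to prove that additivity by passing to groups of cocharacters.

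For the reduction I would set $H_i := \mathrm{im}(G_i \to G_{i+1})$ for $1 \le i \le d-1$; since images and quotients of smooth connected commutative algebraic groups are again of this kind, each $H_i$ is one, and exactness gives $H_1 = G_1$, $H_{d-1} = G_d$, and $H_{i-1} = \ker(G_i \to G_{i+1})$ for $2 \le i \le d-1$, so that each $0 \to H_{i-1} \to G_i \to H_i \to 0$ is short exact. Assuming the additivity $s(M) = s(M') + s(M'')$ for short exact sequences $0 \to M' \to M \to M'' \to 0$, I would then induct on $d$: the cases $d \le 2$ are trivial and $d=3$ is the additivity statement itself, while for $d \ge 4$ the sequence $0 \to G_1 \to G_2 \to H_2 \to 0$ gives $s_2 = s_1 + s(H_2)$, and feeding the length-$(d-1)$ exact sequence $0 \to H_2 \to G_3 \to \dots \to G_d \to 0$ into the inductive hypothesis and combining gives $\sum_{i=1}^{d}(-1)^i s_i = 0$.

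To prove the additivity I would work with cocharacters. For $G$ smooth connected commutative over $\kappa$, any $\kappa$-homomorphism $\Gm \to G$ has image a split torus and hence lies in the maximal $\kappa$-split torus $G_{\mathrm{sp}} \cong \Gm^{s(G)}$, so $\Hom_\kappa(\Gm, G) = \Hom_\kappa(\Gm, G_{\mathrm{sp}})$ is free of rank $s(G)$. Setting $\Gamma := \Gal(\kappa^{\mathrm{sep}}/\kappa)$ and $X(G) := \Hom_{\kappa^{\mathrm{sep}}}(\Gm, G_{\kappa^{\mathrm{sep}}})$ --- the cocharacter lattice of the maximal torus of $G_{\kappa^{\mathrm{sep}}}$, a finitely generated free $\mathbf{Z}$-module on which $\Gamma$ acts continuously through a finite quotient --- Galois descent identifies $\Hom_\kappa(\Gm, G)$ with $X(G)^{\Gamma}$, so $s(G) = \dim_{\mathbf{Q}}(X(G) \otimes_{\mathbf{Z}} \mathbf{Q})^{\Gamma}$. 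Now, given $0 \to G' \to G \to G'' \to 0$, applying $\Hom_{\kappa^{\mathrm{sep}}}(\Gm, -)$ gives an exact sequence $0 \to X(G') \to X(G) \to X(G'')$ of $\Gamma$-lattices whose ranks are the dimensions of the maximal tori of the groups $G'_{\kappa^{\mathrm{sep}}}, G_{\kappa^{\mathrm{sep}}}, G''_{\kappa^{\mathrm{sep}}}$; since geometric torus rank is additive on short exact sequences, these ranks add up, and therefore $0 \to X(G') \otimes \mathbf{Q} \to X(G) \otimes \mathbf{Q} \to X(G'') \otimes \mathbf{Q} \to 0$ is an exact sequence of $\mathbf{Q}[\Gamma]$-modules. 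Finally, since $\Gamma$ acts through a finite group and $\mathbf{Q}$ has characteristic zero, the group algebra in question is semisimple (Maschke), so $(-)^{\Gamma}$ is exact on these modules; taking $\Gamma$-invariants and then $\mathbf{Q}$-dimensions yields $s(G) = s(G') + s(G'')$.

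The hard part will be the claim that geometric torus rank is additive on short exact sequences --- together with the implicit point that torus rank does not change under the extension $\kappa^{\mathrm{sep}} \subseteq \overline{\kappa}$, which is what makes the comparison with maximal tori over $\overline{\kappa}$ legitimate. Over an algebraically closed field this additivity is classical (Chevalley's structure theorem reduces it to the affine case, where it is elementary), but over the possibly imperfect field $\kappa$ one cannot simply base change to $\overline{\kappa}$, and one must instead use that a smooth connected algebraic group over an arbitrary field admits a maximal torus whose formation commutes with field extension (cf.\ \cite{SGA3}, \cite{CGP}). Everything else --- the reduction to short exact sequences, the identification of $s(G)$ with $\dim_{\mathbf{Q}}(X(G) \otimes \mathbf{Q})^{\Gamma}$, and the semisimplicity argument --- is formal.
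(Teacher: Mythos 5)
Your proposal is correct and follows essentially the same route as the paper: reduce to the case of short exact sequences, pass to (co)character lattices of the maximal tori viewed as representations of $\Gal(\kappa^{\mathrm{sep}}/\kappa)$ factoring through a finite quotient, and use semisimplicity of the rational group algebra of that finite quotient to take invariants and compare dimensions of maximal split subtori. The only difference is that you defer the key input (additivity of geometric torus rank, equivalently exactness up to isogeny of the induced sequence of maximal tori) to the classical literature, whereas the paper proves it directly after base change to $\overline{\kappa}$ via Chevalley's theorem and a snake-lemma argument --- a base change which is legitimate precisely because maximal tori of smooth connected commutative groups commute with field extension, as you note.
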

\begin{proof}
First observe that the cases $d=1$ and $d=2$ are trivial. By splitting the sequence into $0\to G_1\to G_2\to G_2/G_1\to 0$ and $0\to G_2/G_1 \to G_3 \to ... \to G_d \to 0,$ as well as using induction on $d,$ we reduce the claim to the case $d=3,$ which is proven in \cite[Lemma 3.2.2.4]{HN}.
\end{proof}\\
We are now ready to prove the following
\begin{theorem}
Let $\kappa$ be an arbitrary field and let $C$ be a proper curve over $\kappa.$ Let $s$ be the dimension of the maximal \rm split \it torus inside $\Pic^0_{C/\kappa}.$ Then 
$$s=\dim_{\Q} H_1(\Gamma(C), \Q) = E-V+N,$$
where $E$ is the number of edges, $V$ the number of vertices, and $N$ the number of connected components of $\Gamma(C).$ \label{graphcohomtheorem}
\end{theorem}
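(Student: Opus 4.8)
The plan is to reduce to the case where $C$ is reduced and seminormal, and then to read off the split-torus rank of $\Pic^0_{C/\kappa}$ from the five-term exact sequence linking it to $\Pic^0_{\widetilde C/\kappa}$, using Lemma \ref{alternatingsplitlem} to control the ranks. The combinatorial identity $\dim_\Q H_1(\Gamma(C),\Q)=E-V+N$ is automatic: the simplicial chain complex of $\Gamma(C)$ lives in degrees $0$ and $1$, its $H_0$ has dimension $N$, so $\dim_\Q H_1=E-\operatorname{rank}\partial=E-(V-N)$. It remains to prove $s=E-V+N$.

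First I would pass to $C_{\mathrm{red}}$: this changes neither side, since the kernel of $\Pic^0_{C/\kappa}\to\Pic^0_{C_{\mathrm{red}}/\kappa}$ is unipotent (the kernel sheaf $1+\mathscr N_C$ has a finite filtration with coherent $\Og_{C_{\mathrm{red}}}$-module quotients) and $\Gamma(C)\cong\Gamma(C_{\mathrm{red}})$ by Lemma \ref{homeographlem}. Writing $\widetilde C\to C^{\mathrm{sn}}\overset{\varsigma}{\to}C$ for the canonical factorisation of the normalisation, Theorem \ref{Structurethm} shows $\ker\varsigma^\ast$ is split unipotent, so $s$ is again unchanged on replacing $C$ by $C^{\mathrm{sn}}$, while $\Gamma(C)\cong\Gamma(C^{\mathrm{sn}})$ by Lemma \ref{homeographlem} once more (the map $C^{\mathrm{sn}}\to C$ is a finite homeomorphism inducing the identity on normalised reductions). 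So assume $C$ is reduced and seminormal; then $C$ is the push-out $\widetilde C\cup_{\Spec A}\Spec B$, with $\Spec B$ the singular locus and $\Spec A$ its preimage in $\widetilde C$ (\cite{Ov}, Theorem 2.24(ii)).

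Now I would apply Lemma \ref{alternatingsplitlem} to the exact sequence
$$0\to\Res_{\Gamma(C,\Og_C)/\kappa}\Gm\to\Res_{\Gamma(\widetilde C,\Og_{\widetilde C})/\kappa}\Gm\to\mathcal R\to\Pic^0_{C/\kappa}\to\Pic^0_{\widetilde C/\kappa}\to 0$$
of \cite{Ov}, Proposition 2.30 (the analogue over $\kappa$ of Proposition \ref{Otherpaperprop}), where $\mathcal R:=\Res_{A/\kappa}\Gm/\Res_{B/\kappa}\Gm$, and also to $0\to\Res_{B/\kappa}\Gm\to\Res_{A/\kappa}\Gm\to\mathcal R\to 0$. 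This reduces everything to three computations. (a) For any finite $\kappa$-algebra $D=\prod_j D_j$ with $D_j$ local, the dimension of the maximal split torus of $\Res_{D/\kappa}\Gm$ equals the number $\#\pi_0(\Spec D)$ of local factors: indeed $\Res_{D_j/\kappa}\Gm$ is an extension of $\Res_{\kappa(y_j)/\kappa}\Gm$ by a unipotent group, and the norm map $\Res_{\kappa(y_j)/\kappa}\Gm\to\Gm$ is surjective with kernel having no split subtorus (the kernel is either an anisotropic torus or unipotent). Applied to $\Gamma(C,\Og_C)$, $\Gamma(\widetilde C,\Og_{\widetilde C})$, $A$ and $B$ this gives the values $N$, $V$, $\#\pi_0(\Spec A)$, $\#\pi_0(\Spec B)$ respectively, using that $\Spec\Gamma(C,\Og_C)$ has one component per connected component of $C$, that $\widetilde C$ has $V$ connected (because irreducible) components, and that $N=\#\pi_0(\Gamma(C))=\#\pi_0(C)$. (b) $\Pic^0_{\widetilde C/\kappa}$ contains no torus: this may be checked after base change to $\overline\kappa$, where $\widetilde C$ is geometrically unibranch (being normal), so its reduction has a normalisation which is a universal homeomorphism and Theorem \ref{Structurethm} applies. (c) $E=\#\pi_0(\Spec A)-\#\pi_0(\Spec B)$: unwinding the definition of $\Gamma(C)$, a non-normal closed point $x$ of $C$ contributes $|\Sigma(x)|-1$ ``intersection'' edges and $\sum_{i\colon x\in X_i}(n^{(i)}_x-1)=\beta(x)-|\Sigma(x)|$ loops, hence $\beta(x)-1$ edges in total, where $\beta(x)$ is the number of points of $\widetilde C$ above $x$; summing over $x$ yields $E=\sum_x\beta(x)-\#\{x\}=\#\pi_0(\Spec A)-\#\pi_0(\Spec B)$. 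Chaining the two instances of Lemma \ref{alternatingsplitlem} now gives $s(\Pic^0_{C/\kappa})=N-V+\bigl(\#\pi_0(\Spec A)-\#\pi_0(\Spec B)\bigr)=E-V+N$.

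The main difficulty is bookkeeping rather than anything deep. Ingredient (c) requires matching the slightly awkward ``ordered path of edges'' prescription at a multi-component point with the honest point count $\#\pi_0(\Spec A)$, and one must be careful that the Galois- and residue-field-theoretic subtleties concealed in the phrase ``maximal split torus'' are correctly absorbed by the norm-torus computation in (a) — precisely, each local factor contributes exactly $1$, regardless of the residue extension, which is what makes the crude right-hand count emerge. A secondary point to watch is the claim that $\Pic^0_{\widetilde C/\kappa}$ has no torus over a general (imperfect) field, for which one genuinely needs geometric unibranchness of the regular curve $\widetilde C$.
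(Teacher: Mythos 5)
Your argument is correct in substance, but it takes a genuinely different route from the paper for the main combinatorial step. The paper, after the same two reductions (to $C$ reduced via \cite{BLR}, Chapter 9.2, Proposition 5 and Lemma \ref{homeographlem}, and to $C$ seminormal via Theorem \ref{Structurethm} and Lemma \ref{alternatingsplitlem}), does \emph{not} apply the units--Picard sequence once to the full conductor square; instead it factors $\widetilde C\to C^{\mathrm{sn}}=C$ into elementary push-outs $C_1\to\cdots\to C_n$ (each gluing either along a field extension or along $\kappa(x)\times\kappa(x)\to\kappa(x)$), applies the sequence of \cite{Ov}, Proposition 2.30 and Lemma \ref{alternatingsplitlem} at each step, and tracks by a case analysis (cases 1, 2.A, 2.B, 2.C) how $E$, $V$, $N$ of $\Gamma(C_i)$ change; the base case is the regular curve, handled exactly as in your point (b). Your one-shot version — five-term sequence for $C=\widetilde C\cup_{\Spec A}\Spec B$, split-rank $1$ per local factor of a finite algebra, vanishing split rank for $\Pic^0_{\widetilde C/\kappa}$, and the local identity that each singular point contributes $(\#\Sigma(x)-1)+\sum_i(n_x^{(i)}-1)=\beta(x)-1$ edges, whence $E=\#\pi_0(\Spec A)-\#\pi_0(\Spec B)$ — is legitimate (the paper itself uses \cite{Ov}, Proposition 2.30 and Theorem 2.24(ii) for the full normalisation of a seminormal curve, e.g.\ in Proposition \ref{seminormalpropII} and Theorem \ref{Raynaudtypethm}), and it buys a shorter proof that replaces the paper's graph-surgery case analysis by a single Euler-characteristic-style count; what it costs is that you must silently invoke that the number of branches of $X_i$ at $x$ equals the number of points of $\widetilde{X_i}$ over $x$ (fine for excellent curves, and implicit in the paper's definition of $\Gamma$) and that unibranch non-intersection singular points (which do occur for seminormal curves over imperfect fields, by gluing along residue field extensions) contribute zero to both sides. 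Two small wobbles, neither fatal: in (a) your description of the norm-one kernel as ``anisotropic torus or unipotent'' is not quite right for a general finite extension that is neither separable nor purely inseparable — the clean statement, which is also the one the paper uses, is that $\Gm\to\Res_{\ell/\kappa}\Gm$ is the maximal split subtorus, proved by dévissage through the maximal separable subextension; and in (b) the citation of Theorem \ref{Structurethm} should be spelled out as: over $\overline\kappa$ the normalisation of the reduction is a universal homeomorphism, hence coincides with the seminormalisation, so the kernel is split unipotent and $\Pic^0$ of the smooth normalisation is an abelian variety (\cite{Ov}, Proposition 2.32) — which is exactly the paper's base case.
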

\begin{proof}
Using \cite[Chapter 9.2, Proposition 5]{BLR} and Lemma \ref{homeographlem} above, we may assume that $C$ is reduced. Let $\widetilde{C}$ and $C^{\mathrm{sn}}$ be the normalisation and the seminormalisation of $C,$ respectively. Because the map $C^{\mathrm{sn}} \to C$ is an homeomorphism, Lemma \ref{homeographlem} shows that $\Gamma(C)$ and $\Gamma(C^{\mathrm{sn}})$ are isomorphic. By Theorem \ref{Structurethm} and Lemma \ref{alternatingsplitlem}, the dimension of the maximal split tori inside $\Pic^0_{C/\kappa}$ and $\Pic^0_{C^{\mathrm{sn}}/\kappa}$ coincide. Hence we may assume that $C$ is seminormal. It follows from the proof of \cite[Theorem 2.24(ii)]{Ov} that the morphism $\widetilde{C}\to C=C^{\mathrm{sn}}$ can be factorised as 
$\widetilde{C}=C_1 \to C_2 \to ... \to C_n=C,$ such that, for each $i=1,..., n-1,$ there is a closed point $x_{i+1}\in C_{i+1},$ a reduced $\kappa(x_{i+1})$-algebra $A_i,$ and a closed immersion $\Spec A_i\to C_i$ such that the diagram
$$\begin{CD}
C_i@>>>C_{i+1}\\
@AAA@AAA\\
\Spec A_i @>>> \Spec \kappa(x_{i+1})
\end{CD}$$
is co-Cartesian. Moreover, we may assume that $A_i$ is either isomorphic to $\kappa(x_{i+1}) \times \kappa(x_{i+1})$ or a field extension of $\kappa(x_{i+1})$ \cite[Corollary 2.23]{Ov}.  We shall prove the Theorem by induction on $n.$ \\
If $n=0,$ then $C$ is regular and $\Gamma(C)$ contains no edges. In particular, $H_1(\Gamma(C), \Q)=0,$ so we must show that $\Pic^0_{C/\kappa}$ contains no split torus. We may replace $\kappa$ by $\kappa\alg$ and $C$ by $(C_{\kappa\alg})_{\mathrm{red}}$ (Lemma \ref{alternatingsplitlem} and \cite[Chapter 9.2, Proposition 5]{BLR}). But $C$ is geometrically unibranch\footnote{In particular, $C_{\kappa\alg}$ is unibranch. This is not immediate from the definition, but can be seen as follows: Let $x$ be a closed point of $C_{\kappa\alg}$ with image $y$ in $C_{\kappa\sep}$ and image $z$ in $C.$ By assumption, the punctured spectrum of $\Og_{C, z}^{\mathrm{sh}} = \Og_{C_{\kappa\sep}, y}^{\mathrm{h}}$ is connected \cite[ Tag 0BQ4]{Stacks}. However, since $\Og_{C_{\kappa\alg}, y}^{\mathrm{h}} = \Og_{C_{\kappa\sep},y}^{\mathrm{h}}\otimes_{\kappa\sep}\kappa\alg,$ those two rings have homeomorphic punctured spectra. }\cite[Tag 0BQ3]{Stacks}, so in the factorisation of the normalisation map $(C_{\kappa\alg})_{\mathrm{red}}^{\sim} \to (C_{\kappa\alg})_{\mathrm{red}}$ (as in \cite[Theorem 2.24]{Ov}), only push-outs along the map $\Spec \kappa\alg[\epsilon]/\langle \epsilon ^2 \rangle \to \Spec \kappa\alg$ can appear. Hence the dimensions of the maximal split tori inside $\Pic^0_{(C_{\kappa\alg})_{\mathrm{red}}/\kappa\alg}$ and $\Pic^0_{(C_{\kappa\alg})_{\mathrm{red}}^{\sim}/\kappa\alg}$ coincide. But the latter algebraic group contains no torus by \cite[Proposition 2.32]{Ov}. \\
Now let $\sigma_j$ be the dimension of the maximal split torus inside $\Pic^0_{C_j/\kappa}$ for each $j=1,..., n.$ For the induction step, we must show that, for $i=1,..., n-1,$ we have
\begin{align} \sigma_{i+1}-\sigma_i=\dim_{\Q} H_1(\Gamma(C_{i+1}), \Q) - \dim_{\Q} H_1(\Gamma(C_i), \Q). \label{quantity}\end{align} Several cases must now be considered: \\
\it 1st case: \rm The ring $A_i$ is a field extension of $\kappa(x_{i+1}).$ In this case, the map $C_i \to C_{i+1}$ is an homeomorphism, so $\Gamma(C_i)\cong \Gamma(C_{i+1})$ by Lemma \ref{homeographlem}. In particular, the quantity on the right vanishes.  Now recall the exact sequence
\begin{align}
0 &\nonumber\to \Res_{\Gamma(C_{i+1}, \Og_{C_{i+1}})/\kappa}\Gm \to \Res_{\Gamma(C_{i}, \Og_{C_{i}})/\kappa}\Gm \to \Res_{A_i/\kappa}\Gm / \Res_{\kappa(x_{i+1})/\kappa}\Gm \\
 &  \to \Pic^0_{C_{i+1}/\kappa} \to \Pic^0_{C_i/\kappa} \to 0 \label{exseqX}
\end{align}
from \cite[Proposition 2.30]{Ov}. Let $s_1,..., s_5$ be the dimensions of the maximal split tori in the algebraic groups appearing in the sequence (\ref{exseqX}) (without the trivial ones, and in the order in which they appear in the sequence). Then $s_1$ and $s_2$ are equal to the numbers of connected components of $C_{i+1}$ and $C_i,$ respectively. This follows from the fact that, for any finite field extension $\kappa\subseteq \ell,$ the canonical map $\Gm \to \Res_{\ell/\kappa}\Gm$ identifies $\Gm$ with the maximal split torus inside $\Res_{\ell/\kappa}\Gm.$ In particular, $s_1=s_2$ in the case we are considering. Using that the map
$$\Hom_{\kappa}(\Gm, \Res_{\kappa(x_{i+1})/\kappa}\Gm) \to \Hom_{\kappa}(\Gm, \Res_{A_i/\kappa}\Gm)$$ is an isomorphism and that $\Ext^1_{\kappa}(\Gm, \Res_{\kappa(x_{i+1})/\kappa}\Gm) = \Ext^1_{\kappa(x_{i+1})}(\Gm, \Gm) = 0,$ we see that $\Res_{A_i/\kappa}\Gm / \Res_{\kappa(x_{i+1})/\kappa}\Gm$ contains no split torus, so $s_3=0.$ Hence Lemma \ref{alternatingsplitlem} implies that the quantity on the left in (\ref{quantity}) vanishes as well. \\
\it 2nd case: \rm Now we consider the case where $A_i \cong \kappa(x_{i+1}) \times \kappa(x_{i+1}).$ Then there is an isomorphism 
$$\Res_{A_i/\kappa}\Gm / \Res_{\kappa(x_{i+1})/\kappa}\Gm \cong \Res_{\kappa(x_{i+1})/\kappa}\Gm,$$ so $s_3=1.$
We must once more distinguish three cases: \\
\it Case 2.A: \rm  The two connected components of $\Spec A_i$ map to distinct connected components of $C_i.$ Then we have $s_1-s_2+s_3 = 0,$ so Lemma \ref{alternatingsplitlem} shows that the quantity on the left in (\ref{quantity}) vanishes. Observe that the number of edges which are not loops in $C_i$ (resp. $C_{i+1}$) is given by the expression
$$\sum_{x} (\#\Sigma(x)-1),$$
where $x$ runs through all intersection points of $C_i$ (resp. $C_{i+1}$). It is clear that the numbers of loops in $\Gamma(C_i)$ and $\Gamma(C_{i+1})$ are the same. Depending on whether the two points of $\Spec A_i$ map to intersection points or non-intersection points in $C_i,$ we distinguish three cases, in each of which we see that the graph $\Gamma(C_{i+1})$ has one edge more than $\Gamma(C_i)$ and one fewer connected component (but the same number of vertices). Therefore the quantity on the right in (\ref{quantity}) vanishes as well. \\
\it Case 2.B: \rm The two connected components of $\Spec A_i$ map to the same connected component of $C_i,$ but there is no irreducible component containing both of them. Then $s_1-s_2+s_3=1.$ The same argument as in case 2.A shows that the graph $\Gamma(C_{i+1})$ has one edge more than $\Gamma(C_i),$ but the same number of vertices and connected components. Hence the quantities in (\ref{quantity}) coincide once again. \\
\it Case 2.C: \rm There exist irreducible components of $C_i$ containing both connected components of $\Spec A_i.$ If at most one of the images of the two points of $\Spec A_i$ is an intersection point, $\Gamma(C_{i+1})$ arises from $\Gamma(C_i)$ by adding an additional loop at precisely one vertex. Suppose therefore that both images $x$ and $y$ of the points of $\Spec A_i$ are intersection points, and denote their image in $C_{i+1}$ by $z.$ Then 
$\Sigma(z) = \Sigma(x) \cup \Sigma(y),$
so 
$$\#\Sigma(z)-1 = (\#\Sigma(x)-1) + (\#\Sigma(y) -1) - \#(\Sigma(x)\cap \Sigma(y)) +1.$$ Moreover, $\Gamma(C_{i+1})$ contains an additional loop at each vertex contained in $\Sigma(x) \cap \Sigma(y),$ which implies that $\Gamma(C_{i+1})$ has precisely one edge more than $\Gamma(C_i),$ but the same number of vertices and irreducible components. 
Hence the two quantities in (\ref{quantity}) coincide in every case and the proof is complete. 
\end{proof} \\
$\mathbf{Remark}.$ The theorem we just proved is valid without assuming that $C$ is reduced, and without any restrictions on $\delta(\kappa).$ Note, moreover, that the isomorphism type of $\Gamma(C)$ depends upon the ordering on the set of irreducible components of $C,$ whereas the homotopy type of the associated cell complex does not. This follows from the fact that $\dim_{\Q} H_1(\Gamma(C), \Q)$ only depends upon $C$ as we have just seen, and this dimension classifies a finite connected (multi)graph up to homotopy.
\begin{corollary}
Keep the notation from Theorem \ref{Raynaudtypethm}, and assume moreover that the equivalent conditions (i) - (iii) listed there are satisfied. Then the following are equivalent: \\
(a) $\Pic^0_{C/K}$ admits a Néron model (i. e. a Néron lft-model of finite type) over $R,$ and \\
(b) the graph $\Gamma(C_{\widehat{K}^{\mathrm{sh}}})$ is a forest. \\
If, moreover, $R$ is excellent, these conditions are equivalent to \\
(b') the graph $\Gamma(C_{K^{\mathrm{sh}}})$ is a forest. \label{RaynaudGenCor}
\end{corollary}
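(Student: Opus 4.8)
The plan is to convert the finiteness of the N\'eron lft-model into the vanishing of the dimension of a maximal split torus, and then to read that off the toric graph via Theorem~\ref{graphcohomtheorem}. Concretely, I would first pass to the strictly Henselian base $\widehat{R}^{\mathrm{sh}}$ (the strict henselisation of $\widehat{R}$), observing that the hypotheses of Theorem~\ref{Raynaudtypethm} persist: seminormality of $C_{\widehat{K}}$ forces seminormality of $C_{\widehat{K}^{\mathrm{sh}}}$, and the base change of a proper regular model of $\widetilde{C}_{\widehat{K}}$ is a proper regular model of the normalisation of $C_{\widehat{K}^{\mathrm{sh}}}$. Since conditions (i)--(iii) are assumed, $\Pic^0_{C/K}$ already admits a N\'eron lft-model $\mathscr{N}$ over $R$, and by \cite{BLR}, Chapter 10.1, Proposition~3 its formation commutes with $R\to\widehat{R}$ and with $R\to R^{\mathrm{sh}}$. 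As the component group $\Phi_{\mathscr{N}}$ is a group scheme over the residue field $\kappa$, its finiteness is unaffected by the (separable) residue-field extensions produced by completion and strict henselisation; hence $\mathscr{N}$ is of finite type over $R$ if and only if its base change to $\widehat{R}^{\mathrm{sh}}$ is of finite type, i.e. if and only if $\Pic^0_{C_{\widehat{K}^{\mathrm{sh}}}/\widehat{K}^{\mathrm{sh}}}$ admits a N\'eron model over $\widehat{R}^{\mathrm{sh}}$.

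Over the strictly Henselian discrete valuation ring $\widehat{R}^{\mathrm{sh}}$, the N\'eron lft-model of $\Pic^0_{C_{\widehat{K}^{\mathrm{sh}}}/\widehat{K}^{\mathrm{sh}}}$ is of finite type exactly when its group of components is finite, which by \cite{BLR}, Chapter 10.2, Theorem~2 is equivalent to the triviality of the maximal split subtorus of $\Pic^0_{C_{\widehat{K}^{\mathrm{sh}}}/\widehat{K}^{\mathrm{sh}}}$. By Theorem~\ref{graphcohomtheorem} the dimension of that split torus equals $\dim_{\mathbf{Q}} H_1(\Gamma(C_{\widehat{K}^{\mathrm{sh}}}),\mathbf{Q})$, and a finite (multi)graph has vanishing first Betti number precisely when it is a forest (reduce to the connected case, where this is the classical characterisation of a tree). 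Combining this with the previous reduction gives the equivalence (a) $\iff$ (b).

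For the equivalence with (b') I would use that, $R$ being excellent, $R^{\mathrm{sh}}\to\widehat{R}^{\mathrm{sh}}$ is a regular morphism, so $K^{\mathrm{sh}}\to\widehat{K}^{\mathrm{sh}}$ is a separable field extension; moreover $K^{\mathrm{sh}}$ is separably algebraically closed in $\widehat{K}^{\mathrm{sh}}$, because every nontrivial finite separable extension of $K^{\mathrm{sh}}$ is ramified (the base being strictly Henselian), and a ramified extension cannot embed into $\widehat{K}^{\mathrm{sh}}$ since completion of a discrete valuation ring does not enlarge the value group. A separable field extension in which the base field is separably algebraically closed is regular, so $K^{\mathrm{sh}}\subseteq\widehat{K}^{\mathrm{sh}}$ is regular. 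Regular base change of a curve preserves the formation of the normalisation, the number of irreducible components (tensoring a regular extension with an $F$-domain again yields a domain), the branch numbers at closed points, and the intersection points, so it induces an isomorphism of toric graphs $\Gamma(C_{K^{\mathrm{sh}}})\cong\Gamma(C_{\widehat{K}^{\mathrm{sh}}})$ compatible with the chosen orderings; hence one is a forest exactly when the other is, which is (b) $\iff$ (b'). (Alternatively, one may combine Theorem~\ref{graphcohomtheorem} with the invariance of the split rank of $\Pic^0$ under the regular extension $K^{\mathrm{sh}}\subseteq\widehat{K}^{\mathrm{sh}}$.)

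The points I expect to be delicate are the descent of the finite-type property along $R\to\widehat{R}^{\mathrm{sh}}$ together with the persistence of the hypotheses of Theorem~\ref{Raynaudtypethm} under this base change, and, in the excellent case, the invariance of the toric graph under $K^{\mathrm{sh}}\subseteq\widehat{K}^{\mathrm{sh}}$, which rests on $K^{\mathrm{sh}}$ being separably closed in its completion; the translation between ``$\Gamma$ is a forest'' and ``$\Pic^0$ has no split torus'' is supplied cleanly by Theorem~\ref{graphcohomtheorem} and \cite{BLR}, Chapter 10.2, Theorem~2.
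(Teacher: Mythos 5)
Your treatment of (a) $\iff$ (b) is correct and is in substance the paper's own argument: the paper simply quotes \cite{BLR}, Chapter 10.2, Theorem 1, which already tests the existence of a finite-type N\'eron model over $R$ itself by the absence of subgroups of type $\Ga$ or $\Gm$ over $\widehat{K}^{\mathrm{sh}}$, so your preliminary base change of the lft-model to $\widehat{R}^{\mathrm{sh}}$ via \cite{BLR}, Chapter 10.1, Proposition 3 is harmless but not needed; note also that the statement you want (given that the lft-model exists, finite type is equivalent to the absence of a split subtorus over $\widehat{K}^{\mathrm{sh}}$) is Theorem 1 of Chapter 10.2, not Theorem 2 -- the latter is the lft-existence criterion in terms of $\Ga$ and is the statement carrying the excellent-case clause (b'). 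Where you genuinely diverge from the paper is (b'): the paper argues ``analogously'', i.e.\ it applies the excellent-case form of the BLR criterion directly over $K^{\mathrm{sh}}$ and then Theorem \ref{graphcohomtheorem} to $C_{K^{\mathrm{sh}}}$, with no comparison of the two curves; you instead compare $\Gamma(C_{K^{\mathrm{sh}}})$ with $\Gamma(C_{\widehat{K}^{\mathrm{sh}}})$ using regularity of $K^{\mathrm{sh}}\subseteq\widehat{K}^{\mathrm{sh}}$. That route can be made to work (irreducible components, intersection points and branch numbers are indeed insensitive to a regular extension, and your alternative -- invariance of the split rank of $\Pic^0$ under a regular extension -- is closer to what the paper does), but each of these invariance assertions needs a line of proof, whereas the paper's route avoids them entirely.

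There is one step whose justification is wrong as stated: it is not true that every nontrivial finite separable extension of $K^{\mathrm{sh}}$ is ramified when $\kappa$ is imperfect, which is allowed in Theorem \ref{Raynaudtypethm}. For instance, in equal characteristic $p$ the extension $K^{\mathrm{sh}}[y]/\langle y^p+\pi y-a\rangle$, with $\pi$ a uniformiser and $a$ a unit whose residue is not a $p$-th power, is separable of degree $p$ with ramification index $1$ and purely inseparable residue extension. The conclusion you need -- that $K^{\mathrm{sh}}$ is separably algebraically closed in $\widehat{K}^{\mathrm{sh}}$ -- is nevertheless true and standard: a finite separable subextension $L$ of $\widehat{K}^{\mathrm{sh}}/K^{\mathrm{sh}}$ has ramification index $1$ (the value group does not grow) \emph{and} trivial residue extension (a nontrivial extension of the separably closed residue field of $R^{\mathrm{sh}}$ would be purely inseparable and cannot embed into the residue field of $\widehat{R}^{\mathrm{sh}}$, which is the same field), so $L$ embeds into the completion of $K^{\mathrm{sh}}$, and a Henselian valued field is separably algebraically closed in its completion by a Hensel--Krasner argument. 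So either repair that step as indicated, or bypass the graph comparison altogether by quoting the excellent-case criterion of \cite{BLR}, Chapter 10.2 over $K^{\mathrm{sh}}$ and applying Theorem \ref{graphcohomtheorem} directly to $C_{K^{\mathrm{sh}}}$, which is the paper's shorter argument.
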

\begin{proof}
By \cite[Chapter 10.2, Theorem 1]{BLR}, (a) is equivalent to $s=0,$ where $s$ is the dimension of the maximal split torus inside $\Pic^0_{C_{\widehat{K}^{\mathrm{sh}}/\widehat{K}^{\mathrm{sh}}}}.$ This is equivalent to (b) by Theorem \ref{graphcohomtheorem} together with the fact that a finite (multi)graph $\Gamma$ is a forest if and only if $H_1(\Gamma, \Q)=0.$ The equivalence (a) $\iff$ (b') if $R$ is excellent follows in an entirely analogous manner.
\end{proof}

\textsc{Mathematisches Institut der Heinrich-Heine-Universität Düsseldorf, Universitätsstr. 1, 40225 Düsseldorf, Germany} \\
\it E-mail address: \rm \texttt{otto.overkamp@uni-duesseldorf.de}\

\end{document}